\theoremstyle{plain}
\newtheorem{theorem}{Theorem}[section]
\newtheorem{proposition}[section]{Proposition}
\newtheorem{corollary}[theorem]{Corollary}
\newtheorem{lemma}[theorem]{Lemma}
\newtheorem{question}[theorem]{Question}
\theoremstyle{definition}
\newtheorem{definition}{Definition}
\theoremstyle{remark}
\newtheorem*{acknowledgements}{Acknowledgements}
\newcommand{\diam}{\operatorname{diam}}
\DeclareMathOperator*{\dist}{dist}
\DeclareMathOperator*{\lip}{Lip}
\DeclareMathOperator*{\acl}{ACL}
\DeclareMathOperator*{\md}{MD}
\DeclareMathOperator*{\modu}{Mod_n}
\DeclareMathOperator*{\modup}{Mod_{d,n}}
\DeclareMathOperator*{\modue}{Mod_{2}}
\def\C{\mathbb C}
\def\R{\mathbb R}
\def\N{\mathbb N}
\numberwithin{equation}{section}
\author{Lauri Hitruhin}
\address{University of Helsinki, Department of Mathematics and Statistics, P.O. Box 68, FIN-00014 University of Helsinki, Finland}
\email{lauri.hitruhin@helsinki.fi}
\author{Athanasios Tsantaris}
\address{University of Helsinki, Department of Mathematics and Statistics, P.O. Box 68, FIN-00014 University of Helsinki, Finland}
\email{athanasios.tsantaris@helsinki.fi}
\subjclass[2020]{Primary 30C65; Secondary 30L10, 32A30}
\keywords{Quasiconformal mappings, metric spaces, quasiregular curves.}
\title[Quasiconformal curves]{Quasiconformal curves and quasiconformal maps in metric spaces}
\begin{document}
	\let\thefootnote\relax\footnotetext{This work was supported by the Academy of Finland projects \#332671 and SA-1346562 }
	\maketitle
	\begin{abstract}
		In this paper we study quasiconformal curves which are a special case of quasiregular curves. Namely embeddings $\Omega\to\R^m$ from some domain $\Omega\subset\R^n$ to $\R^m$, where $n\leq m$, which belong in a suitable Sobolev class and satisfy a certain distortion inequality for some smooth, closed and non-vanishing $n$-form in $\R^m$. These mappings can be seen as quasiconformal mappings between $\Omega$ and $f(\Omega)$. We prove that a quasiconformal curve always satisfies the analytic definition of quasiconformal mappings and the lower half of the modulus inequality. Moreover, we give a sufficient condition for a quasiconformal curve to satisfy the metric definition of quasiconformal mappings. We also show that a quasiconformal map from $\Omega$ to $f(\Omega)\subset \R^m$ is a quasiconformal $\omega$ curve for some form $\omega$ under suitable assumptions.  Finally, we show the same is true when we equip the target space $f(\Omega)$ with its intrinsic metric instead of the Euclidean one.
	\end{abstract}
\section{Introduction}
Quasiconformal mappings between Riemannian manifolds are a generalization of conformal mappings, i.e. mappings whose pointwise derivative is the scalar multiple of an orthogonal transformation.  We refer the reader to the books \cite{Gehring2017,Astala2009, Vaeisaelae1971} for thorough introductions to the theory of quasiconformal mappings.

There are three ways to define quasiconformal mappings. The so called analytic, geometric and metric definitions. Each of these definitions can  be given in the setting of metric spaces. Let $(X, d_X,\mu)$ and $(Y,d_Y,\nu)$ be metric measure spaces.
\begin{definition}{(Metric definition)}
	A homeomorphism $f:X\to Y$ is called $H$-quasiconformal if there exists a constant $H<\infty$ such that \begin{equation}\label{eq000}H_f(x):=\limsup_{r\to 0}\frac{L_f(x,r)}{\ell_f(x,r)}\leq H,\end{equation} for all $x\in X$, where $$L_f(x,r)=\sup_{d_X(x,y)\leq r}d_Y(f(x),f(y)) \ \text{and}\  \ell_f(x,r)=\inf_{d_X(x,y)\geq r}d_Y(f(x),f(y)).$$
	
\end{definition}  
 The analytic definition is formulated via the Newton--Sobolev spaces $N^{1,n}_{loc}(X,Y)$, which were introduced in \cite{Shanmugalingam2000} and are one of the many ways to define Sobolev spaces in the setting of metric spaces, see \cite{Hajlasz2003} or \cite{Heinonen2015} for example. We also require the notions of the Lipschitz constant \[\lip f(x)=\limsup_{y\to x}\frac{d_Y(f(x),f(y))}{d_X(x,y)},\] and that of the volume derivative of $f$, \[\mu_f=\limsup_{r\to0}\frac{\nu(f(B(x,r)))}{\mu(B(x,r))}.\]  In \cite{Heinonen2001} it was proved that if the spaces $X$ and $Y$ have locally $n$-bounded geometry then the metric definition is equivalent to both the analytic and the geometric definitions.
 \begin{definition}{(Analytic definition)}\label{analyticdef}
 	A homeomorphism $f:X\to Y$ is called $K$-quasiconformal (of index $n$) if $f\in N^{1,n}_{loc}(X,Y)$ and there exists a constant $K\geq 1$  such that \begin{equation}\label{eq01}
 		\lip f(x)^n\leq K \mu_f,
 	\end{equation} almost everywhere in $X$.
 \end{definition}
 
\begin{definition}{(Geometric definition)}
	A homeomorphism $f:X\to Y$ is called $K'$-quasiconformal (of index $n$) if the inequalities
	\begin{equation}\label{eq02}\frac{1}{K'}\modu \Gamma\leq \modu f(\Gamma)\leq K' \modu \Gamma \end{equation}
	hold for every family of paths $\Gamma$ in $X$.
\end{definition}
  Here  $\modu$ denotes the $n$-modulus of the path family $\Gamma$ with respect to the measure of corresponding metric space, see Section \ref{prelim}. Notice that in the above definitions we do not assume that the metric spaces $X$ and $Y$ have $n$-bounded geometry or even that they are Ahlfors $n$-regular. 
  
It is also interesting to point out that, as was proved in \cite{Heinonen1998}, a metric quasiconformal map $f:X\to Y$ between two $n$-Ahlfors regular metric spaces $X$ and $Y$, with $X$ being Loewner and $Y$ being linearly locally connected, is also locally quasisymmetric (see Section \ref{prelim} for terminology). Thus local quasisymmetry is equivalent to quasiconformality with any of the three definitions when the metric spaces $X$ and $Y$ have locally $n$-bounded geometry, see \cite[Theorem 9.8]{Heinonen2001}.

Recently, Pankka in \cite{Pankka2020} generalized the notion of quasiregular maps in the setting where the domain of definition and the target no longer have the same dimension. Let $\Omega^n(\R^m)$  denote the space of smooth differential $n$-forms in $\R^m$, where $n\leq m$. In what follows we will assume that $\omega\in \Omega^n(\R^m)$ is a smooth, non-vanishing and closed differential $n$-form. We call such forms $n$-\textit{volume forms}.

\begin{definition}[Quasiregular $\omega$-curve]\label{def0}
	A mapping $f\in W^{1,n}_{loc}(\Omega,\R^m)$, where $\Omega$ is a domain in $\R^n$ and $n\leq m$ is called a $K$-quasiregular $\omega$-curve if  there exists  a constant $K\geq 1$  such that the  following inequality is satisfied 
	\begin{equation}\label{eq03}
		(||\omega||\circ f) |Df(x)|^n\leq K (\star f^*\omega),
	\end{equation}
	almost everywhere on $\Omega$, where $|Df(x)|$ denotes the supremum norm of the differential of $f$.
\end{definition} 
Here $\star f^*\omega$ is the Hodge star of the $n$-volume form $f^*\omega$ (the pullback of $\omega$), i.e. the function that satisfies $(\star f^*\omega) dx_1\wedge\dots\wedge dx_n=f^*\omega$. The function $||\omega||\colon \R^m\to[0,\infty)$ is the pointwise comass norm of $\omega$ defined as  \begin{equation}\label{eq04}
	||\omega||(p)=\sup\{|\omega_p(v_1,\dots,v_n)|\colon \ v_1,\dots,v_2\in \R^m, \ |v_i|\leq 1\}
\end{equation}
for each $p\in\R^m$.

In fact the theory developed also includes the analogues of  finite distortion mappings, see \cite{Pankka2020, Heikkilae2022, Onninen2021,Hitruhin2022, Heikkilae2023}, but here we are interested in quasiconformal maps only. 

\begin{definition}[Quasiconformal $\omega$-curve]\label{def1}
	A mapping $f:\Omega\to\R^m$, where $\Omega$ is a domain in $\R^n$ and $n\leq m$, is called a $K$-quasiconformal $\omega$-curve if it is a $K$-quasiregular $\omega$-curve and a topological embedding.
\end{definition} 

 The above definition can be seen as the  analogue of Definition \ref{analyticdef} in the case where the target has larger dimension than the domain. Notice that a quasiconformal $\omega$-curve $f:\Omega\to \R^m$ can also be seen as a map $f:\Omega\to f(\Omega)$. We can now equip $f(\Omega)$ with the euclidean metric or with the intrinsic metric.  Hence, we obtain a map between metric spaces of the same topological dimension. 
 
  The question that arises naturally now is whether quasiconformal $\omega$-curves satisfy the three definitions of quasiconformal mappings between metric spaces and if they are locally quasisymmetric. We note here that the three definitions of quasiconformal mappings are known to be equivalent only when the metric spaces involved have locally $n$-bounded geometry and that the image of a quasiconformal curve $f$ does not have this property a priori. Thus it is not clear which definitions of quasiconformality, if any, will a quasiconformal curve satisfy. The main goal of this paper is to explore the interplay between  all these notions.    
 
 We will restrict ourselves to constant coefficient forms. An $n$-volume form in $\R^m$,  $\omega=\sum_I \phi_I(x)dx_I$, where the sum is over all multi-indices $I=(i_1,\dots, i_n)$, $1\leq i_1< \dots< i_n\leq m$,  is called a \textit{constant coefficient form} when the functions $\phi_I$ are constant. Here $dx_I$ denotes the $n$-covector $dx_{i_1}\wedge\dots\wedge dx_{i_n}$. Notice that for constant coefficient forms the comass norm is constant.

As we have already mentioned a quasiconformal curve can be seen as a map between the metric spaces $\Omega$ and $f(\Omega)$ when equipped with a suitable metric. Thus our first goal is to settle whether  $f(\Omega)$ is a metric space of locally bounded $n$-geometry. Since $\Omega$ is always a space of locally bounded $n$-geometry this would mean that the theorem of Heinonen, Koskela, Shanmuganlingam and Tyson from \cite{Heinonen2001} on the equivalence of the definitions of quasiconformality can be used. Unfortunately, as our example in Section \ref{section3} shows this is not the case in general. However, Williams in \cite{Williams2012} has proven that the analytic definition is equivalent the lower half of the modulus inequality in a much more general setting.

Our first result relates the definition of quasiconformal curves to the analytic and geometric definitions of quasiconformality by using the aforementioned result of Williams. 
\begin{theorem}\label{thmmain}
	Let $f:\Omega\to f(\Omega)\subset \R^m$ be an embedding for which $f(\Omega)$ has  locally finite  Hausdorff $n$-measure of $\R^m$. If there exist constants $C>0$, $K\geq 1$ and an $n$-volume form with constant coefficients $\omega$  such that $\mu_f\leq C \star f^*\omega$, then the following are equivalent:
	\begin{enumerate}
	\item $f$ is a quasiconformal  $\omega$-curve,
	\item $f$ is in $N^{1,n}_{loc}(\Omega,\R^m)$ and $\lip f(x)^n\leq K \mu_f$ for almost every $x\in \Omega$, and
	\item $\modu \Gamma\leq K\modu f(\Gamma)$, for any path family $\Gamma$ in $\Omega$.
	\end{enumerate}
\end{theorem}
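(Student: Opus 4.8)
The plan is to prove $(1)\Leftrightarrow(2)$ by an almost-everywhere pointwise comparison of the relevant distortion quantities, and to deduce $(2)\Leftrightarrow(3)$ directly from Williams' characterization \cite{Williams2012} of the analytic definition by the lower modulus inequality. The standing hypotheses enter as follows: local finiteness of the Hausdorff measure $\mathcal H^n$ on $f(\Omega)$ makes $f(\Omega)$ a metric measure space to which Williams' theorem applies and, via the Lebesgue differentiation theorem for measures, gives $\mu_f\in L^1_{loc}(\Omega)$; the inequality $\mu_f\le C\star f^*\omega$ is exactly what lets one replace the metric quantity $\mu_f$ by the analytic quantity $\star f^*\omega$.

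First I would record some elementary pointwise facts. With $J_f=|\partial_1 f\wedge\dots\wedge\partial_n f|$ the $n$-dimensional Jacobian, at every point of differentiability one has $\star f^*\omega=\omega_{f(x)}(\partial_1 f,\dots,\partial_n f)$, so by the duality between the comass norm of $\omega$ and the mass norm of simple $n$-vectors, $\star f^*\omega\le\|\omega\|\,J_f\le\|\omega\|\,|Df(x)|^n$, where $\|\omega\|$ is a positive constant because $\omega$ has constant coefficients. Moreover $J_f\le\mu_f$ almost everywhere, a lower bound that uses only differentiability, since the image of $B(x,r)$ essentially contains an affine copy of $Df(x)(B(0,r))$, whose $\mathcal H^n$-measure is $J_f(x)\,|B(0,r)|$. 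Finally, $\lip f(x)=|Df(x)|$ at points of differentiability.

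For $(1)\Rightarrow(2)$: a quasiregular $\omega$-curve lies in $W^{1,n}_{loc}(\Omega,\R^m)$, hence in $N^{1,n}_{loc}(\Omega,\R^m)$ since $\Omega$ is Euclidean, and it is differentiable almost everywhere. Definition \ref{def0} together with the comass bound and $J_f\le\mu_f$ give $\|\omega\|\,|Df(x)|^n\le K\star f^*\omega\le K\|\omega\|\,\mu_f$, that is $\lip f(x)^n=|Df(x)|^n\le K\mu_f$ a.e. For $(2)\Rightarrow(1)$: $f\in N^{1,n}_{loc}$ again gives $f\in W^{1,n}_{loc}$; since $\mu_f\in L^1_{loc}$, the hypothesis $\lip f(x)^n\le K\mu_f$ forces $\lip f(x)<\infty$ a.e., so by Stepanov's theorem $f$ is differentiable a.e.\ with $|Df(x)|=\lip f(x)$ there. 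Then $\|\omega\|\,|Df(x)|^n\le K\|\omega\|\,\mu_f\le KC\|\omega\|\star f^*\omega$ a.e., and $\mu_f\le C\star f^*\omega$ with $\mu_f\ge0$ forces $\star f^*\omega\ge0$ a.e.; since $f$ is an embedding by hypothesis, $f$ is a quasiconformal $\omega$-curve, with distortion controlled by $K$, $C$ and $\|\omega\|$.

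Finally, $(2)\Leftrightarrow(3)$ is a direct application of \cite{Williams2012}: $f$ is a homeomorphism from $\Omega$ (with Lebesgue measure) onto $f(\Omega)$ (with the Euclidean metric and the locally finite measure $\mathcal H^n$), and Williams' theorem states that, in this generality, membership in $N^{1,n}_{loc}$ together with $\lip f(x)^n\le K\mu_f$ a.e.\ is equivalent to $\modu\Gamma\le K\modu f(\Gamma)$ for all path families $\Gamma$, with no doubling, Ahlfors-regularity, or Loewner assumption on $f(\Omega)$. I expect the main technical care to be needed at the borderline exponent $p=n$: establishing almost-everywhere differentiability (through Stepanov's theorem in $(2)\Rightarrow(1)$, and through the regularity of quasiregular curves in $(1)\Rightarrow(2)$) and checking that the hypotheses of Williams' theorem really are met with the possibly non-Ahlfors-regular measure $\mathcal H^n$ on $f(\Omega)$. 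The comass inequality and the estimate $J_f\le\mu_f$ are routine once set up.
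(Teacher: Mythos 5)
Your overall strategy is the same as the paper's (pointwise a.e.\ comparison of $\lip f$, $\mu_f$ and $\star f^*\omega$, plus Williams' theorem for the modulus half; the paper splits this into Theorems \ref{thm2} and \ref{thm4}), but two of your steps take a genuinely different route. For $(1)\Rightarrow(2)$ you use the comass--mass duality $\star f^*\omega\leq\|\omega\|J_f$ together with the elementary inequality $J_f\leq\mu_f$ at points of differentiability; the paper instead gets the exact identity $\mu_f=\sqrt{\det(Df^TDf)}=\sqrt{\sum_I J_I^2}$ from the area formula (which requires Lusin~(N), hence the higher integrability of Theorem \ref{higherint}) and then bounds $\star f^*\omega=\sum_I c_IJ_I$ by Cauchy--Binet. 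Your version needs only the one-sided bound $J_f\leq\mu_f$, which is weaker input, though your justification of it ("essentially contains an affine copy") should be fleshed out: in codimension one composes with the $1$-Lipschitz orthogonal projection onto the image plane of $Df(x)$ and runs a degree argument there. For $(2)\Rightarrow(1)$ you invoke Stepanov's theorem to get a.e.\ differentiability from $\lip f<\infty$ a.e.; the paper avoids Stepanov entirely and instead observes that $\lip f$ is an $n$-weak upper gradient, so by minimality (Lemma \ref{minimalgradient}) $|Df|\leq\lip f$ a.e., which is all that is needed. Both work; the paper's is slightly more economical.

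One point needs repair. You quote Williams' theorem as characterizing the modulus inequality by $f\in N^{1,n}_{loc}$ together with $\lip f(x)^n\leq K\mu_f$, but Theorem \ref{williams} is stated in terms of the \emph{minimal weak upper gradient} $\rho_f=|Df|$, not $\lip f$. This is harmless for $(2)\Rightarrow(3)$, since $|Df|\leq\lip f$ a.e. But for $(3)\Rightarrow(2)$ as you have structured it, Williams only yields $|Df(x)|^n\leq K\mu_f$, and upgrading this to $\lip f(x)^n\leq K\mu_f$ requires $\lip f=|Df|$ a.e., i.e.\ a.e.\ differentiability --- which at the critical exponent $n$ you cannot extract from Stepanov here, because a priori you control only $|Df|$ and not $\lip f$. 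The fix is exactly the paper's ordering: from $(3)$, Williams plus the hypothesis $\mu_f\leq C\star f^*\omega$ give $|Df|^n\leq KC\star f^*\omega$, hence $(1)$; then $(1)\Rightarrow(2)$ (which you have, using the higher integrability and a.e.\ differentiability of quasiconformal curves) closes the cycle $(1)\Rightarrow(2)\Rightarrow(3)\Rightarrow(1)$. With that rerouting your argument is complete.
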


The assumption that $f(\Omega)$ has locally finite Hausdorff measure is natural. Indeed, since  $f$ is a quasiconformal curve that is always the case due to the fact that the area formula holds, see section \ref{prelim}. 

Next we examine what happens with the metric definition of quasiconformality. To state our results we need to introduce some notation. Suppose that $f=(f_1,\dots,f_m):\Omega\to\R^m$ is a quasiconformal $\omega$-curve. Let $y=(y_1,\dots,y_m)\in f(\Omega)$, we  define $f_I=(f_{i_1},\dots,f_{i_n})$ and $y_I=(y_{i_1},\dots,y_{i_n})$ where $I=(i_1,\dots,i_n)$, and set \[N(f_I,y,A)=\# (f_I^{-1}(y_I)\cap A),\] for each  $A\subset \R^n$. We also let \[N(f_I)=\sup_{y\in f(\Omega)}N(f_I,y,\Omega).\]
We will say that $f\Omega\to \R^m$ is \textit{projection finite} if $\max_I N(f_I)<\infty$, where the maximum is taken over all multi indices.
We are now ready to state our result.
\begin{theorem}\label{thmmetric}
	Let $f:\Omega\to f(\Omega)\subset \R^m$ be a projection finite embedding for which  $f(\Omega)$ is an Ahlfors $n$-regular metric space with the euclidean metric. If there exist a constant $C>0$ and an $n$-volume form with constant coefficients $\omega$  such that $\mu_f\leq C \star f^*\omega$    then the following are equivalent:
	\begin{enumerate}
		\item $f$ is a quasiconformal $\omega$-curve,
		\item  $H_f(x)\leq H$, for all $x\in \Omega$.
	\end{enumerate}
\end{theorem}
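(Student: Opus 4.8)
The plan is to pass through Theorem~\ref{thmmain} and then bridge its analytic and geometric formulations to the metric condition~(2) by comparing the volume derivative $\mu_f$ with $L_f(x,r)$ and $\ell_f(x,r)$ through the Ahlfors $n$-regularity of $f(\Omega)$. Since an Ahlfors $n$-regular space has locally finite Hausdorff $n$-measure, Theorem~\ref{thmmain} applies and tells us that (1) is equivalent both to ``$f\in N^{1,n}_{loc}(\Omega,\R^m)$ with $\lip f(x)^n\le K\mu_f$ a.e.'' and to ``$\modu\Gamma\le K\modu f(\Gamma)$ for every path family $\Gamma$ in $\Omega$''. The elementary input, valid for any homeomorphism whenever $B(x,r)\Subset\Omega$, is the chain of inclusions
\[
B_{f(\Omega)}\big(f(x),\ell_f(x,r)\big)\subseteq f(B(x,r))\subseteq B_{f(\Omega)}\big(f(x),L_f(x,r)\big),
\]
which by Ahlfors regularity gives $c_A\ell_f(x,r)^n\le\mathcal{H}^n(f(B(x,r)))\le C_A L_f(x,r)^n$; dividing by the Lebesgue measure of $B(x,r)$ and letting $r\to0$ yields $\mu_f(x)\le C_1\lip f(x)^n$ for every $x$, and $\mu_f(x)\ge C_2 H^{-n}\lip f(x)^n$ at every $x$ with $H_f(x)\le H$, the constants depending only on $n$ and the regularity data.

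For (2)$\Rightarrow$(1): the bound $H_f\le H$ together with the comparison above gives $\lip f(x)^n\le K_1\mu_f(x)$ a.e., and since $\lip f=|Df|$ a.e., $\|\omega\|\circ f$ is the constant $c_\omega$, and $\mu_f\le C\star f^*\omega$ by hypothesis, this is exactly the distortion inequality of a $K$-quasiregular $\omega$-curve with $K=c_\omega K_1 C$. It remains to check that $f\in W^{1,n}_{loc}(\Omega,\R^m)$ (equivalently $N^{1,n}_{loc}$, as $\Omega$ is euclidean), and I would obtain this from an \emph{ACL} estimate by slicing: on a box $Q\Subset\Omega$, for almost every segment $\gamma\subseteq Q$ parallel to a fixed coordinate axis, and any subdivision of $\gamma$ into consecutive pieces $\gamma_i$ of length $\ell_i$ with midpoints $c_i$, the inclusions above and $H_f(c_i)\le H$ give $\osc_{\gamma_i}f\le 2L_f(c_i,\ell_i/2)\le 2(H+1)\ell_f(c_i,\ell_i/2)\le c(H,n)\,\mathcal{H}^n(f(B(c_i,\ell_i/2)))^{1/n}$ once $\ell_i$ is small; since the balls $B(c_i,\ell_i/2)$ have pairwise disjoint interiors and $f$ is injective, $\sum_i(\osc_{\gamma_i}f)^n\le c'(H,n)\,\mathcal{H}^n(f(N_Q))<\infty$ for a fixed neighbourhood $N_Q$ of $Q$, using that $f(\Omega)$ has locally finite Hausdorff measure. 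A Fubini argument over the lines then yields absolute continuity on almost every line and $|Df|\in L^n_{loc}$, so $f$ is a $W^{1,n}_{loc}$ embedding obeying the distortion inequality, i.e.\ a quasiconformal $\omega$-curve. The only routine technicality here is to exhaust $\Omega$ by sets on which the scale implicit in $H_f(x)\le H$ is uniform.

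For (1)$\Rightarrow$(2): Theorem~\ref{thmmain} supplies $f\in W^{1,n}_{loc}$ and $\modu\Gamma\le K\modu f(\Gamma)$; as $f$ is approximately differentiable a.e., at a point $x$ where $Df(x)$ has rank $n$, with singular values $\sigma_1\ge\dots\ge\sigma_n>0$, one has $|Df(x)|=\sigma_1$, $\mu_f(x)=\sqrt{\sum_I J_{f_I}(x)^2}=\sigma_1\cdots\sigma_n$, and $\star f^*\omega=\sum_I\phi_I J_{f_I}\le(\sum_I|\phi_I|)\mu_f$, so the $\omega$-curve inequality forces $c_\omega\sigma_1^n\le K(\sum_I|\phi_I|)\sigma_1\cdots\sigma_n$, whence $\sigma_1\le H'\sigma_n$ a.e.\ with $H'=K(\sum_I|\phi_I|)/c_\omega$. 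Using $\mu_f=\sigma_1\cdots\sigma_n$ one can upgrade the lower modulus inequality to the full geometric one: pushing an admissible density $\rho$ for a family $\Gamma$ in $\Omega$ forward to $\rho'$ with $\rho'(y)=\rho(f^{-1}(y))/\sigma_n\big(Df(f^{-1}(y))\big)$ on $f(\Omega)$ -- legitimate because $f\in W^{1,n}_{loc}$ is absolutely continuous along $\modu_n$-almost every curve and the set where $Df$ fails to have rank $n$ maps to an $\mathcal{H}^n$-null set -- and changing variables gives $\modu f(\Gamma)\le(H')^{n-1}\modu\Gamma$. Thus $f$ is quasiconformal in the geometric sense as a map $\Omega\to f(\Omega)$, and what remains is to deduce the pointwise metric bound $H_f(x)\le H$ at \emph{every} $x$. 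This geometric-to-metric passage is, I expect, the main obstacle, because the target $f(\Omega)$ is Ahlfors regular but need not satisfy a Poincar\'e inequality -- indeed, by the example of Section~\ref{section3} it need not have locally bounded geometry -- so the equivalence theorem of \cite{Heinonen2001} is unavailable. It is here that projection finiteness is used: for each multi-index $I$ the projection $\pi_I\colon\R^m\to\R^n$ is $1$-Lipschitz and at most $N(f_I)$-to-one on the $n$-rectifiable set $f(\Omega)$, so the modulus inequalities transfer to $f_I=\pi_I\circ f$ (for instance $\modu\Gamma\le N(f_I)K\,\modu f_I(\Gamma)$ for all path families $\Gamma$ in $\Omega$), and since $f_I$ takes values in $\R^n$ one can apply the classical condenser and ring estimates there and then recombine through $|f_I(x)-f_I(y)|\le|f(x)-f(y)|\le c(m,n)\max_I|f_I(x)-f_I(y)|$ to recover a uniform bound for $H_f$. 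Carrying out this recombination rigorously -- controlling the finite branching of the $f_I$, the points at which a given $f_I$ degenerates, and the use of only Ahlfors regularity (rather than a Poincar\'e inequality) for the target -- is the technical heart of the proof.
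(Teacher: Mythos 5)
Your direction (2)$\Rightarrow$(1) is essentially sound and close to the paper's proof (Theorem \ref{thm3}): the inclusion $B_{f(\Omega)}(f(x),\ell_f(x,r))\subseteq f(B(x,r))\subseteq \overline{B}_{f(\Omega)}(f(x),L_f(x,r))$ plus Ahlfors regularity gives $\lip f(x)^n\leq C H^n\mu_f(x)$, and hence finiteness of $\lip f$ a.e.\ (so Rademacher--Stepanoff gives differentiability a.e., a step you should make explicit before writing $\lip f=|Df|$); the only real difference is that you establish $f\in W^{1,n}_{loc}$ by the classical oscillation-slicing ACL argument, where the paper simply invokes the Balogh--Koskela--Rogovin theorem (Theorem \ref{balogh}). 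Both work.

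The direction (1)$\Rightarrow$(2) has a genuine gap. Your push-forward density $\rho'(y)=\rho(f^{-1}(y))/\sigma_n\bigl(Df(f^{-1}(y))\bigr)$ is only verified to be admissible for the images of those curves along which $f$ is absolutely continuous; to conclude $\modu f(\Gamma)\leq (H')^{n-1}\modu\Gamma$ you would need the exceptional family of image curves to have zero $n$-modulus \emph{in the target}, and a zero-modulus family in $\Omega$ need not have zero-modulus image unless $f^{-1}$ itself has Sobolev regularity along curves. That is exactly the question the paper leaves open ($f^{-1}\in N^{1,n}_{loc}(f(\Omega),\Omega)$?), so the upper modulus inequality cannot be assumed. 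Moreover, even granting it, your subsequent ``geometric-to-metric'' passage via the projections $f_I$ is explicitly left unfinished. The point you are missing is that the upper half of the modulus inequality is not needed at all. The paper's argument (Theorem \ref{thm1}) runs: for the path family $\Gamma(\ell,L)$ joining $\partial f^{-1}(B(f(x),\ell))$ to $\partial f^{-1}(B(f(x),L))$, the ring estimate in $\R^n$ (\cite[Theorem 11.7]{Vaeisaelae1971}) gives $\modu\Gamma(\ell,L)\geq a_n>0$; the \emph{lower} modulus inequality from Theorem \ref{thm2} gives $\modu\Gamma(\ell,L)\leq c\,\modu f(\Gamma(\ell,L))$; and the upper mass bound $\mathcal{H}^n(B(y,r)\cap f(\Omega))\leq Cr^n$ (here available from the Ahlfors regularity hypothesis, and in Theorem \ref{thm1} derived from projection finiteness via the area formula, Lemma \ref{lemma1}) makes the usual density $\rho(y)=\bigl(|y-f(x)|\log(L/\ell)\bigr)^{-1}$ admissible with $\modu f(\Gamma(\ell,L))\leq C'\bigl(\log(L/\ell)\bigr)^{1-n}$. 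Chaining these three bounds forces $L(x,r)/\ell(x,r)$ to be bounded for small $r$, which is the metric bound. No Poincar\'e inequality on the target, no inverse regularity, and no recombination of projections is required.
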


Next we discuss the relation of quasiconformal curves and quasisymmetries. Our next result shows quasiconformal curves are not necessarily local quasisymmetries without additional assumptions.

\begin{theorem}\label{thmquasisym}
	There exists a quasiconformal $\omega$-curve $f:\R^2\to\R^3$, where $\omega=dx_1\wedge dx_2$, such that $f$ is not locally quasisymmetric when $f(\R^2)$ is equipped with the euclidean metric.
\end{theorem}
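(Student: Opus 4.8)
The plan is to construct an explicit quasiconformal $\omega$-curve $f\colon\R^2\to\R^3$ with $\omega=dx_1\wedge dx_2$ whose image is a graph over the plane, and to arrange the third coordinate so that the Euclidean geometry of $f(\R^2)$ is badly distorted at a single point in a way that no quasisymmetry can tolerate. Write $f(x_1,x_2)=(x_1,x_2,g(x_1,x_2))$ for a suitable Lipschitz function $g$. For this choice one computes $f^*\omega=dx_1\wedge dx_2$, so $\star f^*\omega\equiv 1$, and the quasiregular $\omega$-curve inequality \eqref{eq03} becomes simply $|Df(x)|^2\le K$ a.e., i.e. $f$ is a quasiconformal $\omega$-curve precisely when $g$ is globally Lipschitz (with $K$ depending on $\operatorname{Lip}(g)$); it is an embedding automatically since it is a graph. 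Thus the whole problem reduces to choosing a Lipschitz $g$ that destroys quasisymmetry.

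Recall that $f$ being locally quasisymmetric at $0$ would require the existence of a homeomorphism $\eta\colon[0,\infty)\to[0,\infty)$ controlling ratios $d_{\R^3}(f(x),f(y))/d_{\R^3}(f(x),f(z))$ in terms of $|x-y|/|x-z|$ for all triples near $0$. Since $f$ is $1$-Lipschitz into $\R^3$ and moves points at least as far as their planar displacement ($|f(x)-f(y)|\ge|x-y|$ because the first two coordinates are the identity), the only way to break quasisymmetry is to make the map \emph{expand} certain directions far more than others at arbitrarily small scales, by putting rapid oscillation of controlled amplitude into $g$. Concretely, I would take $g$ to be a sum of bump functions supported on a shrinking sequence of annuli $A_k=\{2^{-k-1}\le|x|\le 2^{-k}\}$, where on $A_k$ the function $g$ oscillates with amplitude $\sim 2^{-k}$ but with a fixed number of oscillations, so that $\operatorname{Lip}(g)$ stays bounded (giving a genuine quasiconformal curve) while the "vertical stretch" of $f$ on $A_k$ stays comparable to $1$ and does not decay relative to the scale $2^{-k}$. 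Then one picks, for each $k$, two points $y_k,z_k$ at comparable planar distance $\sim 2^{-k}$ from a basepoint $x_k\in A_k$, one of them sitting at a "peak" of $g$ and the other in a "flat" spot, so that $|f(x_k)-f(y_k)|/|f(x_k)-f(z_k)|\to\infty$ (or $\to0$) while $|x_k-y_k|/|x_k-z_k|$ stays bounded away from $0$ and $\infty$. This contradicts the existence of any single control function $\eta$, hence $f$ is not locally quasisymmetric.

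I expect the main obstacle to be the bookkeeping that makes all three requirements hold \emph{simultaneously}: (i) $\operatorname{Lip}(g)<\infty$ globally, so that $f$ really is a $K$-quasiconformal $\omega$-curve for a finite $K$; (ii) enough oscillation surviving at every dyadic scale near $0$ so that the quasisymmetry-defeating triples exist at arbitrarily small scales; and (iii) compatibility at the interfaces between consecutive annuli (e.g. taking $g$ to vanish to first order on the circles $|x|=2^{-k}$, or inserting thin transition zones) so that the pieced-together $g$ is Lipschitz with a uniform constant. A clean way to handle (i) and (iii) at once is to write $g(x)=|x|\,\psi(x/|x|,\log_2|x|)$ with $\psi$ a fixed smooth function that is periodic in its last variable; the homogeneity of degree one makes the Lipschitz bound scale-invariant and automatic, while periodicity in $\log_2|x|$ guarantees the same oscillation at every scale $2^{-k}$, and one only needs to check that $\psi$ can be chosen to produce the required peak/flat configuration. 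The remaining step is then the elementary but slightly delicate estimate showing that the chosen triples give blow-up of the distance ratio in the target, which uses only the Pythagorean formula for $|f(x)-f(y)|^2=|x-y|^2+|g(x)-g(y)|^2$ together with the construction of $\psi$.
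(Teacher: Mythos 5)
Your approach cannot work, and the obstruction is already visible in your own second paragraph. For a graph map $f(x_1,x_2)=(x_1,x_2,g(x_1,x_2))$ with $g$ globally $L$-Lipschitz (which, as you correctly note, is exactly the condition needed for $f$ to be a quasiconformal $\omega$-curve here, since $\star f^*\omega\equiv 1$), one has the two-sided bound
\begin{equation}
	|x-y|\;\leq\;|f(x)-f(y)|\;\leq\;\sqrt{1+L^2}\,|x-y|
\end{equation}
for all $x,y\in\R^2$. Thus $f$ is bi-Lipschitz onto its image, and every bi-Lipschitz homeomorphism is $\eta$-quasisymmetric with the linear function $\eta(t)=\sqrt{1+L^2}\,t$: indeed $|f(x)-f(y)|/|f(x)-f(z)|\leq\sqrt{1+L^2}\,|x-y|/|x-z|$. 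In particular the ratio $|f(x_k)-f(y_k)|/|f(x_k)-f(z_k)|$ can never blow up while $|x_k-y_k|/|x_k-z_k|$ stays bounded, no matter how you arrange the oscillation of $g$; your peak/flat triples cannot exist. No amount of bookkeeping in the annuli $A_k$ or homogeneous ansatz $g(x)=|x|\,\psi(x/|x|,\log_2|x|)$ can repair this, because the failure is forced by the Lipschitz bound you need for the distortion inequality. Amplitude $\sim 2^{-k}$ oscillation at scale $2^{-k}$ with bounded Lipschitz constant is precisely the bi-Lipschitz regime.

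The example has to be genuinely non-bi-Lipschitz, and the paper achieves this by abandoning the graph form. It takes $f(x_1,x_2)=(e^{x_1}\cos x_2,\,e^{x_1}\sin x_2,\,\phi(x_2)e^{x_1})$ with $\phi$ smooth, bounded, increasing, with bounded derivative and $\phi(x)\to 0$ as $x\to-\infty$: a planar exponential map whose infinitely many sheets over a given disk are separated vertically by amounts $\phi(-2\pi n)e^{x_1}$ that tend to $0$. Consecutive sheets thus accumulate in $\R^3$ while remaining far apart in the intrinsic/domain geometry, which destroys local linear connectivity of $f(\R^2)$ (and even upper Ahlfors regularity). Since local linear connectivity is preserved by quasisymmetric maps and $\R^2$ has it, $f$ cannot be locally quasisymmetric. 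If you want to salvage your strategy, you would need a mechanism of this kind --- distinct, intrinsically distant parts of the image coming Euclidean-close at arbitrarily small scales --- rather than anisotropic stretching of a Lipschitz graph.
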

By Theorem \ref{thmmetric} (see also Theorem \ref{thm1}) and  Theorem 9.8 of Heinonen et al. in \cite{Heinonen2001} we know that a quasiconformal curve such that $f(\Omega)$ is a metric space of bounded $n$-geometry and  $\max_I N(f_I)<\infty$ is always a local quasisymmetry. We do not know however if the condition of bounded $n$-geometry on $f(\Omega)$ is necessary.
\begin{question}
	Is a projection finite quasiconformal curve  a local quasisymmetry?
\end{question}

Notice that in all the above theorems we have always assumed that $f(\Omega)$ is equipped with the euclidean metric. It is also interesting to examine what happens when we equip $f(\Omega)$ with its intrinsic metric. By that we mean the metric \[d(x,y)=\inf_\gamma\ell (\gamma),\]where $\ell$ denotes the length of the path and the infimum is taken over all rectifiable paths connecting $x$ and $y$ in $f(\Omega)$. Note that $d$ is not necessarily a finite metric so the distance between two points might be infinite. But we will show that for the mappings we are interested in, that is always the case and $d$ defines a metric in the classical sense. We will denote by $\mathcal{H}^n_d$ the Hausdorff $n$-measure with respect to the metric $d$ and by $\mathcal{H}^n_e$ the Hausdorff $n$-measure with respect to the euclidean metric. Notice that by changing the metric on the target we obtain three new definitions for quasiconformality.

\begin{theorem}\label{thm5}
	Let $f:\Omega\to \R^m$ be a quasiconformal $\omega$-curve for a constant coefficient $n$-volume form $\omega$ in $\R^m$.  Then for any  measurable $A\subset \Omega$ we have $\mathcal{H}^n_d(f(A))=\mathcal{H}^n_e(f(A))$.
\end{theorem}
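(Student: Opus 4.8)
The plan is to reduce the desired equality to the area formula, after isolating an infinitesimal comparison of the two metrics. First, the trivial half: since every rectifiable path in $f(\Omega)$ joining two points $p,q$ is at least as long as the segment $[p,q]$, we have $d(p,q)\ge|p-q|$, hence $\diam_d S\ge\diam_e S$ for every $S\subseteq f(\Omega)$, and therefore $\mathcal{H}^n_e(E)\le\mathcal{H}^n_d(E)$ for every $E\subseteq f(\Omega)$ (a cover admissible for $\mathcal{H}^n_{d,\delta}$ is admissible for $\mathcal{H}^n_{e,\delta}$ with no larger weight). Thus the whole content is the reverse inequality $\mathcal{H}^n_d(f(A))\le\mathcal{H}^n_e(f(A))$. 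Here I would invoke the area formula: by Theorem~\ref{thmmain} we have $f\in N^{1,n}_{loc}(\Omega,\R^m)$, and, being a quasiconformal curve, $f$ is approximately differentiable almost everywhere, satisfies Lusin's condition (N), and obeys the area formula
\[
\mathcal{H}^n_e(f(A))=\int_A J_f(x)\,dx,\qquad J_f(x):=\sqrt{\det\big(Df(x)^{T}Df(x)\big)},
\]
for every measurable $A\subseteq\Omega$; note also $J_f\le|Df|^n\in L^1_{loc}$. Hence it suffices to prove $\mathcal{H}^n_d(f(A))\le\int_A J_f\,dx$.

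The key step is an infinitesimal comparison: \emph{for almost every $x_0\in\Omega$, the map $f$, regarded as a map into $(f(\Omega),d)$, is metrically differentiable at $x_0$, with metric differential the seminorm $v\mapsto|Df(x_0)v|$}, that is,
\[
\lim_{r\to0}\ \sup_{|v|=1}\Big|\,\tfrac1r\,d\big(f(x_0+rv),f(x_0)\big)-|Df(x_0)v|\,\Big|=0.
\]
The inequality $\ge$ is immediate from $d\ge d_e$ and approximate differentiability. For $\le$ I would join $f(x_0)$ to $f(x_0+rv)$ by the curve $t\mapsto f(x_0+trv)$, $t\in[0,1]$: since $f\in N^{1,n}_{loc}$, for $n$-a.e.\ curve $\gamma$ the composition $f\circ\gamma$ is absolutely continuous with $\operatorname{length}(f\circ\gamma)\le\int_0^1|Df(\gamma(t))\gamma'(t)|\,dt$, so for a.e.\ $x_0$ and a.e.\ direction $v$ this curve is rectifiable of length at most $r\int_0^1|Df(x_0+trv)v|\,dt$, which tends to $r\,|Df(x_0)v|$ since $x_0$ is a Lebesgue point of $Df$; a Fubini argument over the space of lines promotes ``a.e.\ $x_0$ and a.e.\ $v$'' to ``a.e.\ $x_0$ and all $v$'', and the density condition $\frac1{|B(x_0,r)|}\int_{B(x_0,r)}|Df-Df(x_0)|\to0$ at a.e.\ $x_0$ upgrades the pointwise-in-$v$ limit to the uniform one. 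I expect this to be the main obstacle: one must control the \emph{intrinsic} distance $d$ by the length of a genuine curve in $f(\Omega)$, uniformly in $v$, even though $f(\Omega)$ need not be locally quasiconvex (cf.\ Theorem~\ref{thmquasisym}); what rescues the estimate is that only infinitesimally short curves enter, where the image is almost a flat $n$-disc.

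Granting the metric differentiability, I would finish by mimicking the classical proof of the area formula in the metric setting. For a seminorm $\nu$ on $\R^n$ with $\{v:\nu(v)\le1\}$ bounded let $\mathbf J(\nu)$ be its Jacobian (the ratio of the Lebesgue measure of the Euclidean unit ball to that of $\{\nu\le1\}$); then $\mathbf J\big(|Df(x_0)\cdot|\big)=J_f(x_0)$. Fix $\epsilon>0$. At a.e.\ $x_0$, for all small $r$, $f(B(x_0,r))$ lies within error $\epsilon r$ — in \emph{both} metrics, by the previous step — of the flat ellipsoid $Df(x_0)\big(B(x_0,r)\big)$, hence can be covered by sets of $d$-diameter $<\delta$ whose $n$-th powers of diameters sum to at most $(1+\epsilon)^n J_f(x_0)\,|B(x_0,r)|$. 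A Vitali selection from such balls, applied to $A$, then yields $\mathcal{H}^n_d(f(A))\le(1+\epsilon)^n\int_A J_f\,dx+\epsilon$, the residual null set being controlled because the same estimate shows $f$ satisfies Lusin's condition (N) as a map into $(f(\Omega),d)$ while $J_f\in L^1_{loc}$. (On the degenerate set $\{J_f=0\}$ the distortion inequality \eqref{eq03} with a constant–coefficient $\omega$ forces $Df=0$, so the seminorm $|Df(x_0)\cdot|$ vanishes there and the estimate degenerates consistently.) Letting $\epsilon\to0$ gives $\mathcal{H}^n_d(f(A))\le\int_A J_f\,dx=\mathcal{H}^n_e(f(A))$, which with the first paragraph proves the equality.
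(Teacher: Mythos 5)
Your proposal is correct in substance and its core identity --- that the metric differential of $f$ into $(f(\Omega),d)$ is the seminorm $v\mapsto|Df(x)v|$, proved by squeezing $d(f(x+rv),f(x))$ between $|f(x+rv)-f(x)|$ and the length of $f$ along the segment --- is exactly the paper's Lemma~\ref{lemmaradon}(1). Where you diverge is in how this infinitesimal statement is converted into the equality of measures. The paper invokes Karmanova's change-of-variables formula for metric-space-valued Sobolev maps (Theorem~\ref{lemmakarmanova}), which gives $\mathcal{H}^n_d(f(A))=\int_A J(\md(f,x))\,dx$ in one stroke, and then simply observes that $J(|Df(x)\cdot|)=\sqrt{\det(Df^TDf)}$ matches the Radon--Nikodym derivative coming from the Euclidean area formula. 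You instead prove only the upper bound $\mathcal{H}^n_d(f(A))\le\int_A J_f$ by a Kirchheim-style covering/Vitali argument and recover the lower bound for free from $d\ge d_e$; this is a genuine economy, since you never need the full metric area formula, only its easy half plus the trivial metric comparison. The one place your sketch is thinner than the paper's argument is the upgrade from ``a.e.\ $x_0$ and a.e.\ direction $v$'' to genuine metric differentiability (uniformity in $v$): Fuglede plus Lebesgue points of $Df$ only deliver the directional statement for a.e.\ $v$, and the uniform version is what your covering step actually consumes. The paper sidesteps this by feeding the higher integrability of quasiconformal curves (Theorem~\ref{higherint}, $f\in W^{1,p}_{loc}$ with $p>n$) into Karmanova's theorem, which supplies a.e.\ metric differentiability outright; you should likewise invoke $p>n$ (or an explicit Lipschitz-exhaustion argument adapted to the intrinsic metric) rather than relying on the Fubini-plus-density heuristic, since Euclidean-Lipschitz pieces of $f$ need not be $d$-Lipschitz. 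With that point repaired, your route goes through.
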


As a corollary we obtain the analogue of Theorem \ref{thmmain} when the target space is equipped with its intrinsic metric. The notations $N^{1,n}_{d,loc}$, $\lip_d f$, $\rho_{d,f}$, $\mu_{d,f}$ and $\modup$ correspond to the Newton--Sobolev spaces, Lipschitz constant, minimal weak upper gradient, volume derivative and modulus of a path family when we equip $f(\Omega)$ with its intrinsic metric and $\Omega$ with the euclidean metric. 
\begin{theorem}\label{thmpath}
	Let $f:\Omega\to f(\Omega)\subset \R^m$ be an embedding and $f:\Omega\to f(\Omega)$ is a homeomorphism when $f(\Omega)$ is equipped with its intrinsic metric. Assume that $f(\Omega)$ is  locally finite with the Hausdorff $n$-measure of $d$. If there exists constant $C>0$ and an $n$-volume form with constant coefficients $\omega$  such that $\mu_{d,f}\leq C \star f^*\omega$ then the following are equivalent:
	\begin{enumerate}
		\item $f$ is a quasiconformal $\omega$-curve,
		\item $f$ is in $N^{1,n}_{d,loc}(\Omega,f(\Omega))$ and $\lip_d f(x)^n\leq K \mu_{d,f}$ for almost every $x\in \Omega$, and
		\item $\modu \Gamma\leq K\modup f(\Gamma)$ for any path family in $\Omega$.
	\end{enumerate}
\end{theorem}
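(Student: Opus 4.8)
The plan is to establish the cycle $(1)\Rightarrow(3)\Rightarrow(2)\Rightarrow(1)$, using Theorem \ref{thmmain}, the theorem of Williams from \cite{Williams2012}, and two elementary one‑sided comparisons between the euclidean and the intrinsic data on $f(\Omega)$. The first comparison is $d\ge d_e$, immediate from the definition of the intrinsic metric; it gives at once $\mathcal{H}^n_e\le\mathcal{H}^n_d$ on subsets of $f(\Omega)$, hence $\mu_f\le\mu_{d,f}$ a.e., $\lip_e f\le\lip_d f$ everywhere, and (since $f$ is a homeomorphism for both metrics by hypothesis, so the two topologies on $f(\Omega)$ coincide) $\modu\Gamma'\le\modup\Gamma'$ for every path family $\Gamma'$ in $f(\Omega)$. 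The second is that $d$ and $d_e$ assign the same length to every curve $\gamma$ lying in $f(\Omega)$: for any partition $P$ one has $d(\gamma(t_i),\gamma(t_{i+1}))\le\ell_e(\gamma|_{[t_i,t_{i+1}]})$ because $d$ is an infimum of euclidean lengths, so $\ell_d(\gamma)\le\ell_e(\gamma)$, while $d\ge d_e$ gives $\ell_d(\gamma)\ge\ell_e(\gamma)$; consequently the arc‑length parametrisations and the line integrals $\int_\gamma\rho\,ds$ are the same for both metrics, which is what makes the admissibility classes for $\modu$ and $\modup$ on $f(\Omega)$ identical.

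For $(1)\Rightarrow(3)$, first note that the hypotheses of Theorem \ref{thmmain} are satisfied by $f$: $\mathcal{H}^n_e$ is locally finite on $f(\Omega)$ since $\mathcal{H}^n_e\le\mathcal{H}^n_d$, and $\mu_f\le\mu_{d,f}\le C\star f^*\omega$. Hence, if $f$ is a quasiconformal $\omega$-curve, Theorem \ref{thmmain} yields $\modu\Gamma\le K\modu f(\Gamma)$ for every path family $\Gamma$ in $\Omega$; combining this with $\modu f(\Gamma)\le\modup f(\Gamma)$ gives $(3)$.

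For $(3)\Rightarrow(2)$ I would invoke Williams' theorem, the same ingredient used in the proof of Theorem \ref{thmmain}, but now applied to the homeomorphism $f\colon\Omega\to(f(\Omega),d)$ between $\Omega$ with its euclidean structure and $f(\Omega)$ with $d$ and $\mathcal{H}^n_d$: it states precisely that the lower modulus inequality $\modu\Gamma\le K\modup f(\Gamma)$ is equivalent to $f\in N^{1,n}_{d,loc}(\Omega,f(\Omega))$ together with $\lip_d f(x)^n\le K\mu_{d,f}$ almost everywhere. Its hypotheses hold because $f$ is a homeomorphism for $d$ by assumption and $\mathcal{H}^n_d$ is locally finite on $f(\Omega)$. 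The delicate point of the whole argument is exactly this verification, namely that Williams' framework genuinely applies to $(f(\Omega),d)$, which a priori is not a space of bounded $n$-geometry; the relevant feature is that no such assumption is needed in his theorem, and one must be careful that all the comparisons used are the one‑sided ones available without it.

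Finally, for $(2)\Rightarrow(1)$ one argues directly. Since $d\ge d_e$, a (weak) upper gradient of $f$ for the metric $d$ is also one for the euclidean metric, so $\rho_{e,f}\le\rho_{d,f}$ a.e.; together with $d_e(f(\cdot),y_0)\le d(f(\cdot),y_0)$ this shows $f\in N^{1,n}_{loc}(\Omega,\R^m)=W^{1,n}_{loc}(\Omega,\R^m)$. Moreover $|Df(x)|\le\lip_e f(x)\le\lip_d f(x)$ for almost every $x$. Therefore, using $(2)$, the hypothesis $\mu_{d,f}\le C\star f^*\omega$, and that $\|\omega\|$ is a positive constant (the coefficients of $\omega$ being constant), we get
\[(\|\omega\|\circ f)\,|Df(x)|^n=\|\omega\|\,|Df(x)|^n\le\|\omega\|\,\lip_d f(x)^n\le\|\omega\|K\,\mu_{d,f}\le\|\omega\|KC\,(\star f^*\omega)\]
almost everywhere, so $f$ is a quasiregular $\omega$-curve with a comparable constant; being a topological embedding, it is a quasiconformal $\omega$-curve. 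This closes the cycle and yields the equivalence of all three statements with comparable constants. (We remark that Theorem \ref{thm5} in fact gives $\mathcal{H}^n_d=\mathcal{H}^n_e$ on $f(\Omega)$, which would turn the one‑sided comparisons above into equalities; however only the inequality $\mathcal{H}^n_e\le\mathcal{H}^n_d$ is actually used.)
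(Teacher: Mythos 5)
Your overall strategy is sound and, apart from one step, it tracks the paper's argument while being leaner: the paper proves the implication $(1)\Rightarrow(2),(3)$ by first establishing the equality $\mathcal{H}^n_d(f(A))=\mathcal{H}^n_e(f(A))$ (Theorem \ref{thm5}, via Kirchheim--Karmanova metric differentiability) and the pointwise identity $\lip_d f=|Df|$ a.e.\ (Lemma \ref{lemmaradon}), whereas you get by with the one-sided comparisons $d\ge d_e$, $\mathcal{H}^n_e\le\mathcal{H}^n_d$ and the (correct) observation that $d$ and $d_e$ assign equal lengths to curves in $f(\Omega)$, so that the admissibility classes for $\modu$ and $\modup$ coincide. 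Your $(2)\Rightarrow(1)$ and the verification that Williams' framework applies to $(f(\Omega),d,\mathcal{H}^n_d)$ match Theorems \ref{thm6} and \ref{lemmahomeo} in the paper.

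The gap is in $(3)\Rightarrow(2)$. Williams' theorem (Theorem \ref{williams}) is stated in terms of the \emph{minimal weak upper gradient} $\rho_{d,f}$, not the pointwise Lipschitz constant: from $(3)$ it gives $f\in N^{1,n}_{d,loc}$ and $\rho_{d,f}(x)^n\le K\mu_{d,f}(x)$ a.e. Statement $(2)$, however, asks for $\lip_d f(x)^n\le K\mu_{d,f}(x)$, and the inequality available in general points the wrong way: $\lip_d f$ is an upper gradient, so $\rho_{d,f}\le\lip_d f$. None of the one-sided comparisons you allow yourself can bound $\lip_d f$ from \emph{above}, and your closing remark that only $\mathcal{H}^n_e\le\mathcal{H}^n_d$ is used signals that this control is genuinely absent from your argument. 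The paper supplies it through Lemma \ref{lemmaradon}(2), which proves $\lip_d f=|Df|$ a.e.\ --- but only for maps already known to be quasiconformal $\omega$-curves, since its proof uses the higher integrability of Theorem \ref{higherint} and a.e.\ differentiability; this is why the paper derives $(2)$ from $(1)$ (Theorem \ref{corollary}) rather than from $(3)$. Your cycle is easily repaired by inserting $(1)$ between $(3)$ and $(2)$: from Williams' conclusion and the fact that every $d$-upper gradient is a euclidean upper gradient one gets $|Df|=\rho_{e,f}\le\rho_{d,f}$ by Lemma \ref{minimalgradient}, hence $|Df|^n\le K\mu_{d,f}\le KC\,\star f^*\omega$, i.e.\ $(1)$; then Lemma \ref{lemmaradon} upgrades this to the $\lip_d$ form of $(2)$. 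As written, though, the direct appeal to Williams for the $\lip_d$ statement is not justified.
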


 	It interesting to ask whether a quasiconformal $\omega$ curve, $f:\Omega\to \R^m$ satisfies also the upper half of the modulus inequality. It is not hard to see that this amounts to the fact that $f^{-1}: f(\Omega)\to \Omega$ belongs to the right Sobolev space.
 	
 	\begin{question}
 		Let $f:\Omega\to \R^m$ be a quasiconformal $\omega$-curve for a constant coefficient $n$-volume form $\omega$ in $\R^m$. Does $f^{-1}:f(\Omega)\to \Omega$ belong to $N^{1,n}_{loc}(f(\Omega),\Omega)$?
 	\end{question}

	The question about the equivalence of definitions for quasiconformal curves also makes sense in the setting of quasiregular curves. However, the theory of quasiregular mappings between metric spaces has only been developed for branched coverings, see \cite{Guo2016}. Quasiregular curves are not necessarily discrete mappings. Iwaniec, Verchota and Vogel in \cite{Iwaniec2002a} construct a quasiregular curve $f:\C\to\C^2$ such that $f$ is constant on the lower half plane, see also \cite{Heikkilae2022} for a more general example.

\subsection{Structure of the paper}
After this introduction we collect some basic definitions and notation in Section \ref{prelim}. To prove Theorems \ref{thmmain} and \ref{thmmetric} we split them to two parts. The first part is proving that a  quasiconformal curve satisfies the definitions of quasiconformality and the second is the reverse direction. 

In Section \ref{section3} we prove the first half of Theorems \ref{thmmain} and \ref{thmmetric} namely that a quasiconformal curve satisfies the analytic, geometric and metric definitions. That is done in Theorems \ref{thm2} and \ref{thm1}. The first by showing that a quasiconformal curve always satisfies the analytic definition of quasiconformal maps in the setting of metric spaces and then invoking Williams' theorem, see Section \ref{prelim}. The second by first showing that a quasiconformal curve always has an image which is upper Ahlfors regular if we assume that $\max_I N(f_I)<\infty$ and then by adapting the classical proof for quasiconformal mappings to our setting.

In Section \ref{section4} we prove the second half of Theorems \ref{thmmain} and \ref{thmexample}, roughly speaking how to go from the analytic, geometric and metric definitions of quasiconformality to quasiconformal curves. 

In Section \ref{section5} we prove  Theorem \ref{thm5} and its corollary Theorem \ref{thmpath}. For that we require the notion of metric differentiability first introduced by Kirchheim in \cite{Kirchheim1994} and a change of variables formula for Sobolev functions with values in metric spaces  by Karmanova in \cite{Karmanova2007} (see Section \ref{prelim}) which allows one to show that the Radon-Nikodym derivatives of the two Hausdorff measures in Theorem \ref{thm5} are the same. 
\begin{acknowledgements}
	We would like to thank Pekka Pankka and Toni Ikonen for many useful discussions and comments on these topics.
\end{acknowledgements}
\section{Preliminaries}\label{prelim}
\subsection{Modulus of a path family}
An important concept that we are going to use throughout the paper is that of the modulus of a path family. A path $\gamma$ in a metric measure space $(X,d,\mu)$, is a continuous map $\gamma:[a,b]\to X$. The length function $s_\gamma:[a,b]\to\R$ of a path $\gamma$ is defined as \[s_\gamma(t):=\sup_{a\leq x_1\leq\dots\leq x_n\leq t} \sum_{i=1}^n|\gamma(x_{i})-\gamma(x_{i-1})|,\] where the supremum is over all partitions of $[a,t]$. The length of $\gamma$ is $l(\gamma)=s_\gamma(b)$. If $\gamma$ has finite length then we say that $\gamma$ is \textit{rectifiable}. Moreover, a path $\gamma$ is called \textit{locally rectifiable} if each closed subpath of $\gamma$ is rectifiable. If $\gamma$ is rectifiable there exists a unique path $\gamma^0:[0,l(\gamma)]\to X$ such that \[\gamma(t)=\gamma^0(s_\gamma(t)), \ \text{for each}\ t\in[a,b]\] The path $\gamma^0$ is called the arc-length parametrization of $\gamma$. If $\gamma$ is a rectifiable path and $\rho:X\to [0,\infty]$ a Borel function, the line integral of $\rho$ along $\gamma$ is defined by\[\int_\gamma \rho=\int_0^{l(\gamma)}\rho(\gamma^0(t))dt.\] Let $\Gamma$ be a family of paths in a metric  measure space $(X,d,\mu)$. A Borel function $\rho:X\to[0,\infty]$ is said to be admissible for $\Gamma$ if \[\int_\gamma \rho\geq 1\] for every rectifiable path $\gamma\in \Gamma$. The $p$-modulus of $\Gamma$ is defined as \[\modu{}_p(\Gamma)=\inf_\rho\int_X \rho ^p d\mu,\] where the infimum is taken over all admissible functions for $\Gamma$. By convention, we say that $\modu{}_p(\Gamma)=\infty$ if there are no admissible functions for $\Gamma$.

We say that a property holds for $p$-almost every path if the property fails only on a path family $\Gamma$ such that $\modu{}_p(\Gamma)=0$.

\subsection{Newton--Sobolev spaces and upper gradients}
We refer to \cite{Hajlasz2003} or \cite{Heinonen2015} for comprehensive introductions to the theory of Newton--Sobolev spaces. Here we recall some basic facts that we are going to need for our purposes.

We want to define the Newton--Sobolev spaces $N^{1,n}_{loc}(X,Y)$, where $X$ and $Y$ are metric measure spaces. A function $f:(X,d_X,\mu)\to(Y,d_Y,\nu)$, where $\mu$ is finite, is said to be in $L^p(X,Y)$ if $d_Y(f(x),z)\in L^p(X)$ for some, and therefore for all, $z\in Y$. A measurable function $f:X\to Y$  is said to be in  $\tilde{N}^{1,p}(X,Y)$ if $f\in L^p(X,Y)$ and if there exists a Borel function $\rho:X\to[0,\infty]$ so that $\rho\in L^p(X)$ and \begin{equation}\label{eq001}
	|f(\gamma(a))-f(\gamma(b))|\leq \int_\gamma \rho ds,
\end{equation} for $p$-almost every rectifiable path $\gamma:[a,b]\to X$. We call such a function $\rho$ a $p$-weak upper gradient for $f$. If equation \eqref{eq001} holds for all rectifiable curves we call $g$ an upper gradient for $f$. If a function $f$ has a $p$-weak upper gradient then it also has an upper gradient, see for example \cite[Lemma 6.2.2]{Heinonen2015}. 

 Notice that \eqref{eq001} is invariant under a change of parameter so we can always assume that $\gamma$ is parametrized by arc-length. A $p$-weak upper gradient $\rho$ of $f$ is called minimal if for every weak upper gradient $\rho'$ of $f$ we have that $\rho\leq \rho'$ $ \mu$ almost everywhere. If $f$ has a  weak upper gradient in $L^p_{loc}(X)$ then $f$ has a unique (up to measure zero) minimal $p$-weak  upper gradient, see \cite[Theorem 7.16]{Hajlasz2003}, which we will denote by $\rho_f$. We equip the vector space $\tilde{N}^{1,p}(X,Y)$ with the seminorm \[||f||_{\tilde{N}^{1,p}(X,Y)}=||f||_{L^p(X,Y)}+||\rho_f||_{L^p(X)}.\] The Newton--Sobolev space $N^{1,p}(X,Y)$ is now obtained by passing to equivalence classes of functions in $\tilde{N}^{1,p}(X,Y)$, where $f_1\sim f_2$ if and only if $||f_1-f_2||_{\tilde{N}^{1,p}(X,Y)}=0$. Next we define the local Newton--Sobolev spaces. Let $\tilde{N}^{1,p}_{loc}(X,Y)$ be the vector space of functions $f:X\to Y$ such that every point $x\in X$ has an open neighbourhood $U_x$ in $X$ such that $f\in \tilde{N}^{1,p}(U_x,Y)$. Two function $f_1$ and $f_2$ in $\tilde{N}_{loc}^{1,p}(X,Y)$ are said to be equivalent if every point $x\in X$ has an open neighbourhood $U_x$ such that $$||{f_1}_{|U_x}-{f_2}_{|U_x}||_{\tilde{N}^{1,p}(U_x,Y)}=0.$$  The local Newton--Sobolev ${N}_{loc}^{1,p}(X,Y)$ space is now defined to be the equivalence classes of functions in $\tilde{N}_{loc}^{1,p}(X,Y)$ under the preceding equivalence relation. Notice that to define the local Newton--Sobolev space we only require that $X$ is locally finite.

 The next lemma tells us what happens in the case of maps between euclidean spaces, see \cite[Corollary 7.15]{Hajlasz2003} and \cite[Remark 3.24]{Heinonen2001}. We remind here that we always assume that $n\leq m$.
 
 \begin{lemma}\label{minimalgradient}
 	For any function $f:\Omega\to\R^m$,  where $\Omega$ domain in $\R^n$ and $m\geq 1$, we have that $f\in N^{1,p}_{loc}(\Omega,\R^m)$ if and only if $f\in W^{1,p}_{loc}(\Omega,\R^m)$, $1\leq p<\infty$.	Moreover, any function $f\in W^{1,p}_{loc}(\Omega,\R^m)$ has $|Df|$ as the least $p$-weak upper gradient. 
\end{lemma}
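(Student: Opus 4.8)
The plan is to establish three facts: (i) the operator norm $|Df|$ of the weak differential is always a $p$-weak upper gradient of an $f\in W^{1,p}_{loc}(\Omega,\R^m)$; (ii) conversely, the existence of a $p$-weak upper gradient in $L^p_{loc}$ forces $f$ to be ACL with all partial derivatives dominated by that gradient; and (iii) a pointwise blow-up shows that $|Df|$ is the \emph{least} $p$-weak upper gradient. All of this is local, so I would work on balls $B\Subset\Omega$ and, in the Newton--Sobolev direction, always pass to the ACL representative of $f$.

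For (i) I would first observe that for $f\in C^\infty$ the fundamental theorem of calculus along an arc-length parametrized rectifiable curve $\gamma$ gives $|f(\gamma(b))-f(\gamma(a))|=\bigl|\int_a^b Df(\gamma(t))\gamma'(t)\,dt\bigr|\le\int_\gamma|Df|\,ds$, so $|Df|$ is a genuine upper gradient. For a general $f\in W^{1,p}_{loc}(B,\R^m)$ I would take $f_j\in C^\infty$ with $f_j\to f$ in $W^{1,p}(B)$; then $|Df_j|$ is an upper gradient of $f_j$, and since the operator norm is $1$-Lipschitz we have $|Df_j|\to|Df|$ in $L^p(B)$, so the standard closure of upper gradients under $L^p$-convergence (\cite[Ch.~6]{Heinonen2015}) shows that $|Df|$ is a $p$-weak upper gradient of $f$. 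As $f\in L^p_{loc}$ and $|Df|\le\bigl(\sum_i|\nabla f_i|^2\bigr)^{1/2}\in L^p_{loc}$, this gives $f\in N^{1,p}_{loc}(\Omega,\R^m)$, and by the existence of a minimal gradient \cite[Theorem~7.16]{Hajlasz2003} also $\rho_f\le|Df|$ a.e.

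For (ii), given a $p$-weak upper gradient $\rho\in L^p_{loc}$ of $f$, I would fix an axis-parallel box $Q\Subset\Omega$ and, for each coordinate direction, use that the subfamily of the foliation of $Q$ by segments parallel to that direction along which either $\int_\gamma\rho=\infty$ or \eqref{eq001} fails is $p$-exceptional, together with the standard fact that a foliation by parallel segments has $p$-modulus zero precisely when its set of base points is $\mathcal{H}^{n-1}$-null. This makes $f$ absolutely continuous on a.e. such segment with line derivative bounded by $\rho\circ\gamma$, hence ACL with $|\partial_j f|\le\rho\in L^p_{loc}$ for each $j$, and the classical ACL characterization of Sobolev functions then yields $f\in W^{1,p}_{loc}(\Omega,\R^m)$. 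Combining (i) and (ii) gives the asserted equality of spaces.

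Finally, (iii) requires the reverse inequality $|Df|\le\rho_f$. I would fix a countable dense set $\{v_k\}_{k\in\N}$ of unit vectors, so that $|Df(x)|=\sup_k|Df(x)v_k|$ wherever $Df$ exists, and for each $k$ run the foliation argument of (ii) with segments parallel to $v_k$: for $\mathcal{L}^n$-a.e. $x$ the line through $x$ in direction $v_k$ avoids the exceptional family, so \eqref{eq001} with $\rho=\rho_f$ applies to all subsegments $s\mapsto x+sv_k$; dividing by $t$ and letting $t\to0^+$ at a point $x$ that is simultaneously a Lebesgue point of $s\mapsto\rho_f(x+sv_k)$ and of $s\mapsto Df(x+sv_k)v_k$ along that line gives $|Df(x)v_k|\le\rho_f(x)$, and taking the supremum over the countable family yields $|Df|\le\rho_f$ a.e. I expect the main obstacle to be exactly the bookkeeping in this last step: one must ensure that the various exceptional sets (bad lines, failure of the Lebesgue-point property of $\rho_f$, failure of directional differentiability of $f$) are $\mathcal{L}^n$-null \emph{simultaneously for all} of the countably many directions $v_k$, which is what legitimizes the final supremum. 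Steps (i) and (ii) are otherwise routine and could instead simply be quoted from \cite{Hajlasz2003, Heinonen2001}.
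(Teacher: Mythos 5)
The paper does not actually prove this lemma: it is quoted from \cite[Corollary 7.15]{Hajlasz2003} and \cite[Remark 3.24]{Heinonen2001}, so there is no in-paper argument to compare against. Your reconstruction is correct and is essentially the standard proof found in those references: (i) is the approximation/Fuglede argument showing $|Df|$ is a $p$-weak upper gradient (you could equally invoke the paper's own Lemma \ref{fugledelemma} to get absolute continuity of $f$ along $p$-almost every curve and then apply the fundamental theorem of calculus directly, which avoids having to pass the upper-gradient property through the smooth approximation); (ii) is Shanmugalingam's ACL argument; and (iii) is the usual blow-up along a countable dense set of directions. Two pieces of bookkeeping you should make explicit if you write this out in full: in (ii), to conclude absolute continuity of $f$ on a segment you need inequality \eqref{eq001} on all \emph{subsegments}, so you must use the standard fact that the family of curves possessing a subcurve in a $p$-exceptional family is itself $p$-exceptional; and in (iii), for a non-coordinate direction $v_k$ you need that the a.e.\ line derivative of $f$ along lines in direction $v_k$ equals $Df(x)v_k$, which follows from the ACL property applied in rotated coordinates together with Fubini. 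Neither point is a gap in the approach, and your acknowledged concern about taking the countable union of exceptional null sets in (iii) is indeed harmless.
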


We are also going to need the following  important lemmas, see for example \cite[Theorem 5.7]{Hajlasz2003} and \cite[Chapter II, Theorem 2.3]{Rickman}. Notice that in \cite{Rickman} the theorem is stated for $\acl^p$ mappings but it is well known that this is equivalent to continuous $W_{loc}^{1,p}$ mappings, see for example \cite[Chapter I, Proposition 1.2]{Rickman}.
\begin{lemma}\label{lemmaaprox}
	Let $u_k:X\to \R\cup\{-\infty,\infty\}$ be a sequence of Borel functions which converge to a Borel function $u:X\to \R\cup\{-\infty,\infty\}$ in $L^p(X)$. Then there exists a subsequence $u_{k_j}$ such that \[\int_\gamma |u_{k_j}-u|\to 0, \ \text{as}\ j\to\infty\] for $p$-almost every rectifiable path  $\gamma$.
\end{lemma}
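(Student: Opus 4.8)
The plan is to prove this by the classical Borel--Cantelli--type argument on path families, i.e. the standard proof of Fuglede's lemma. First I would reduce to the case $u\equiv 0$ by setting $h_k:=|u_k-u|$; since $u_k\to u$ in $L^p(X)$, each $h_k$ is a nonnegative Borel function, finite $\mu$-almost everywhere, and $\|h_k\|_{L^p(X)}\to 0$. It then suffices to extract a subsequence $h_{k_j}$ with $\int_\gamma h_{k_j}\,ds\to 0$ for $p$-almost every rectifiable path $\gamma$. Passing to a subsequence (not relabeled), I may assume $\|h_{k_j}\|_{L^p(X)}\leq 4^{-j}$.

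Next I would introduce, for each $j$, the path family $\Gamma_j$ consisting of all rectifiable paths $\gamma$ with $\int_\gamma h_{k_j}\,ds\geq 2^{-j}$. By construction the Borel function $2^{j}h_{k_j}$ is admissible for $\Gamma_j$, hence
\[
\modu{}_p(\Gamma_j)\leq \int_X \left(2^{j}h_{k_j}\right)^p d\mu = 2^{jp}\|h_{k_j}\|_{L^p(X)}^p\leq 2^{-jp},
\]
so that $\sum_j \modu{}_p(\Gamma_j)<\infty$. Let $\Gamma_\infty:=\bigcap_{N}\bigcup_{j\geq N}\Gamma_j$ be the family of rectifiable paths lying in infinitely many $\Gamma_j$. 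By the countable subadditivity of the $p$-modulus, $\modu{}_p(\Gamma_\infty)\leq \sum_{j\geq N}\modu{}_p(\Gamma_j)\to 0$ as $N\to\infty$, so $\modu{}_p(\Gamma_\infty)=0$. If a rectifiable path $\gamma$ does not belong to $\Gamma_\infty$, then $\gamma\notin\Gamma_j$ for all sufficiently large $j$, i.e. $\int_\gamma h_{k_j}\,ds<2^{-j}$ eventually, and therefore $\int_\gamma |u_{k_j}-u|\,ds=\int_\gamma h_{k_j}\,ds\to 0$. This gives the conclusion for every path outside the modulus-zero family $\Gamma_\infty$.

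I do not expect a serious obstacle here; the argument is essentially routine once the modulus formalism is available. The only points requiring mild care are: that each $\Gamma_j$ is a legitimate path family carrying the displayed admissible function, which only uses that $h_{k_j}$ is Borel so that $\int_\gamma h_{k_j}\,ds$ is a well-defined element of $[0,\infty]$; the choice of subsequence making $\sum_j 2^{jp}\|h_{k_j}\|_{L^p}^p$ finite; and the invocation of countable subadditivity of the $p$-modulus. One should also remark that modifying $u_k$ or $u$ on a $\mu$-null set changes $\int_\gamma h_{k_j}\,ds$ only for a path family of $p$-modulus zero, so the statement is insensitive to the almost-everywhere ambiguity inherent in $L^p$, and the same extraction works simultaneously for all representatives.
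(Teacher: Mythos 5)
Your argument is correct: it is the standard Borel--Cantelli/Fuglede proof (pass to a subsequence with $\|u_{k_j}-u\|_{L^p}\leq 4^{-j}$, bound the $p$-modulus of the exceptional families $\Gamma_j$ by admissibility of $2^{j}|u_{k_j}-u|$, and use countable subadditivity on $\limsup_j\Gamma_j$). The paper does not prove this lemma itself but quotes it from the literature (Haj\l asz, Theorem 5.7), where the proof given is exactly the one you wrote, so there is nothing to reconcile.
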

\begin{lemma}\label{fugledelemma}
	Let $f:\Omega\to \R^m$ be a $W^{1,p}_{loc}(\Omega,\R^m)$ function and let $\Gamma$ be the family of all locally rectifiable paths in $\Omega$ which have a closed subpath on which $f$ is not absolutely continuous. Then $\modu{}_p(\Gamma)=0$.
\end{lemma}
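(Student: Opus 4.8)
The plan is to deduce the lemma from results already at hand: that $|Df|$ is a $p$-weak upper gradient of $f$ (Lemma \ref{minimalgradient}), a Fuglede-type finiteness statement extracted from Lemma \ref{lemmaaprox}, the countable subadditivity of the $p$-modulus, and one elementary observation. The observation is that if a nonnegative Borel function $\rho$ is admissible for a path family $\Gamma'$, then $\rho$ is automatically admissible for the family of \emph{all} paths that contain a rectifiable subpath belonging to $\Gamma'$, because the line integral of $\rho$ over a path dominates its line integral over any subpath; consequently this enlarged family has $p$-modulus at most $\modu{}_p(\Gamma')$.

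First I would localize. Since membership in $N^{1,p}_{loc}$ is a local property, fix a countable cover $\{U_j\}$ of $\Omega$ by open sets with $\overline{U_j}$ compact in $\Omega$ and $f\in N^{1,p}(U_j,\R^m)$. If $\gamma\in\Gamma$, then $\gamma$ has a closed, hence rectifiable, subpath $\beta$ on which $f$ is not absolutely continuous; the image of $\beta$ is compact, so $\beta$ subdivides into finitely many closed subpaths each contained in some $U_j$, and since absolute continuity of $f\circ\beta$ is equivalent to absolute continuity on every piece of a finite subdivision of the parameter interval, $f$ must fail to be absolutely continuous on one of these subpaths, a rectifiable path lying in some $U_j$. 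Writing $\Gamma'_j$ for the family of rectifiable paths contained in $U_j$ on which $f$ is not absolutely continuous, it follows that every $\gamma\in\Gamma$ contains a rectifiable subpath in some $\Gamma'_j$, so the admissibility observation together with countable subadditivity gives $\modu{}_p(\Gamma)\leq\sum_j\modu{}_p(\Gamma'_j)$. Hence it suffices to show $\modu{}_p(\Gamma'_j)=0$ for each fixed $j$; abbreviate $U=U_j$.

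Next I would isolate two exceptional families inside $U$. By Lemma \ref{minimalgradient}, $g:=|Df|$ lies in $L^p(U)$ and is a $p$-weak upper gradient of $f$ on $U$, so there is a family $\Gamma_1$ with $\modu{}_p(\Gamma_1)=0$ for which $|f(\gamma(a))-f(\gamma(b))|\leq\int_\gamma g\,ds$ holds whenever $\gamma:[a,b]\to U$ is rectifiable and $\gamma\notin\Gamma_1$. Applying Lemma \ref{lemmaaprox} to the truncations $\min(g,k)\to g$ in $L^p(U)$ (convergence holds by dominated convergence, $\overline{U}$ being compact), I obtain a family $\Gamma_2$ with $\modu{}_p(\Gamma_2)=0$ such that $\int_\gamma g\,ds<\infty$ for every rectifiable $\gamma\notin\Gamma_2$. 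Enlarging $\Gamma_1$ and $\Gamma_2$ to the families $\widehat\Gamma_i$ of paths containing some rectifiable subpath in $\Gamma_i$, the admissibility observation gives $\modu{}_p(\widehat\Gamma_i)=0$. I then claim $\Gamma'_U\subset\widehat\Gamma_1\cup\widehat\Gamma_2$. Indeed, let $\gamma$ be a rectifiable path in $U$ with $\gamma\notin\widehat\Gamma_1\cup\widehat\Gamma_2$, parametrized by arc length, $\gamma:[0,\ell]\to U$. Since $\gamma\notin\widehat\Gamma_2$ we have $g\circ\gamma\in L^1([0,\ell])$, so $\Lambda(t):=\int_0^t(g\circ\gamma)(r)\,dr$ is absolutely continuous on $[0,\ell]$. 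Since $\gamma\notin\widehat\Gamma_1$, every closed subpath $\gamma|_{[s,t]}$ avoids $\Gamma_1$, hence $|f(\gamma(t))-f(\gamma(s))|\leq\int_{\gamma|_{[s,t]}}g\,ds=\Lambda(t)-\Lambda(s)$. For any finite disjoint collection $\{(s_i,t_i)\}$ this yields $\sum_i|f(\gamma(t_i))-f(\gamma(s_i))|\leq\sum_i(\Lambda(t_i)-\Lambda(s_i))$, which is small once $\sum_i(t_i-s_i)$ is small, by the absolute continuity of $\Lambda$. Thus $f\circ\gamma$ is absolutely continuous, i.e.\ $\gamma\notin\Gamma'_U$. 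Therefore $\modu{}_p(\Gamma'_U)\leq\modu{}_p(\widehat\Gamma_1)+\modu{}_p(\widehat\Gamma_2)=0$, which completes the argument.

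The computational steps---the arc-length reparametrization and the estimate for $\Lambda$---are routine; the point requiring genuine care is the bookkeeping with path families, namely upgrading the endpoint upper gradient inequality (valid for $p$-almost every path) to its validity on all subpaths of $p$-almost every path and performing the localization to the $U_j$, all without inflating the modulus; this is exactly what the enlarged families $\widehat\Gamma_i$ and the admissibility observation are designed for. One could instead run the classical $\acl^p$ argument---Fubini over lines parallel to the coordinate axes combined with Fuglede's theorem---as is done in \cite{Rickman}. Finally, $f$ is to be represented by a fixed $\acl$ (equivalently, Newtonian) representative throughout; this is harmless here since in every application of the lemma $f$ is a topological embedding, and in particular continuous.
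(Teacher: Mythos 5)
Your proof is correct, but note that the paper does not actually prove this lemma: it is quoted as a known result, with pointers to \cite[Theorem 5.7]{Hajlasz2003} and \cite[Chapter II, Theorem 2.3]{Rickman}, the latter being the classical statement that a continuous $\acl^p$ mapping is absolutely continuous on $p$-almost every path, established there by the Fubini-over-coordinate-lines plus Fuglede argument that you mention in your closing remark. Your argument takes the metric-space route instead: fix the Newtonian representative, use Lemma \ref{minimalgradient} to obtain $|Df|$ as an $L^p$ $p$-weak upper gradient on each chart $U_j$, upgrade the endpoint inequality from ``$p$-almost every path'' to ``every closed subpath of $p$-almost every path'' via the minorization principle, discard the null family where $\int_\gamma |Df|\,ds=\infty$ by applying Lemma \ref{lemmaaprox} to the truncations $\min(|Df|,k)$, and deduce absolute continuity of $f\circ\gamma$ from the absolutely continuous majorant $\Lambda$. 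All the steps check out, including the two that genuinely require care: the localization to the $U_j$ (legitimate because a closed subpath has compact image and absolute continuity is checked on a finite subdivision) and the bookkeeping with the enlarged families $\widehat\Gamma_i$, which is exactly the standard device for passing to subpaths without increasing modulus. Your remark about fixing an $\acl$ (quasicontinuous) representative is also apposite, since absolute continuity on paths is not a property of the $W^{1,p}$ equivalence class; the paper sidesteps this by applying the lemma only to embeddings. As for what each approach buys: the paper's citation is shorter and stays in the Euclidean framework in which the lemma is stated, while your argument is self-contained modulo Lemmas \ref{minimalgradient} and \ref{lemmaaprox} and carries over verbatim to Newtonian maps into general metric targets, which fits the spirit of the rest of the paper (indeed a version of it is implicitly used in the proof of Proposition \ref{proppath}).
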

Next we  show that for an embedding $f:\Omega\to \R^m$ in $W^{1,p}_{loc}(\Omega,\R^m)$, for some $p>n$ we have that   $f(\Omega)$ is actually a rectifiably connected metric space. It follows that  the intrinsic metric $d$ is always finite in $f(\Omega)$ when $f$ is a quasiconformal $\omega$-curve since quasiconformal curves have higher integrability, see Theorem \ref{higherint}.
\begin{proposition}\label{proppath}
	Let $f:\Omega\to \R^m$, where $\Omega$ is a domain in $\R^n$ be an embedding in $W^{1,p}_{loc}(\Omega,\R^m)$, for some $p>n$. Then $f(\Omega)$ equipped with the euclidean metric is a rectifiably connected metric space. In fact for any two points $f(x),f(y)\in f(\Omega)$, $x,y\in \Omega$ there exists a path $\gamma$  connecting $x$ and $y$ for which $d(f(x),f(y))=\ell(f(\gamma))$, where $d$ is the intrinsic metric.
\end{proposition}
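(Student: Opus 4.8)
The plan is to establish the two assertions separately: first that $f(\Omega)$ is rectifiably connected, and then that the intrinsic distance is always realized by a path.

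For rectifiable connectedness, since $p>n$ Morrey's embedding gives $W^{1,p}_{loc}(\Omega,\R^m)\hookrightarrow C^{0,1-n/p}_{loc}(\Omega,\R^m)$, and I take the embedding $f$ to be the locally H\"{o}lder continuous representative, so that $f\circ\gamma$ is a genuine path for every path $\gamma$ in $\Omega$. The key local observation is that, for $p>n$ and any $z\in\Omega$ with $B(z,r)\subset\Omega$, the $p$-modulus of the family $\Gamma_z$ of all radial segments of length $r$ issuing from $z$ is strictly positive: writing the energy $\int_{B(z,r)}\rho^p\,dx$ in polar coordinates and applying H\"{o}lder's inequality along each radius, the estimate closes precisely because $(n-1)/(p-1)<1$, yielding $\modu{}_p(\Gamma_z)\gtrsim r^{\,n-p}>0$. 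Combined with Lemma \ref{fugledelemma} — the family of locally rectifiable paths having a subpath on which $f$ is not absolutely continuous has $p$-modulus zero — this gives that for a.e.\ radial segment issuing from $z$ the restriction of $f$ is absolutely continuous, hence has length at most $\int|Df|$ along the segment, which is finite since $|Df|\in L^p_{loc}\subset L^1_{loc}$ and the segment is compactly contained in $\Omega$. Given $x,y\in\Omega$, I would then start from any polygonal path joining them inside $\Omega$ and use this observation at each vertex — including the fixed endpoints $x$ and $y$, whose adjacent segments are themselves radial segments issuing from $x$ and from $y$ — so that, after replacing the interior vertices by arbitrarily small perturbations (a straightforward Fubini argument), $f$ is absolutely continuous on every segment; its image under $f$ is then a rectifiable path joining $f(x)$ to $f(y)$. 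Since moreover $\ell(f\circ\gamma')\geq|f(x)-f(y)|>0$ for every path $\gamma'$ joining $x$ and $y$ (the chord is no longer than the arc) and $f$ is injective, $d$ is a genuine, finite metric, so $(f(\Omega),d)$, and a fortiori $f(\Omega)$ with the Euclidean metric, is rectifiably connected.

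For the attainment statement I would use the direct method. Fix $x,y$ and set $D=d(f(x),f(y))$, finite by the previous step. Choose paths $\gamma_k$ in $\Omega$ from $x$ to $y$ with $\ell(f\circ\gamma_k)\to D$; after reparametrizing $f\circ\gamma_k$ proportionally to arclength on $[0,1]$ we obtain maps $g_k\colon[0,1]\to f(\Omega)\subset\R^m$ that are uniformly $(D+1)$-Lipschitz, satisfy $g_k(0)=f(x)$ and $g_k(1)=f(y)$, and have image contained in the fixed closed ball $\overline{B}(f(x),D+1)$. By the Arzel\`{a}--Ascoli theorem a subsequence converges uniformly to a Lipschitz map $g\colon[0,1]\to\R^m$ with the same endpoints, and lower semicontinuity of length under uniform convergence gives $\ell(g)\leq\liminf_k\ell(g_k)=D$. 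Once $g$ is known to take its values in $f(\Omega)$, it is a competitor in the infimum defining $d$, so $\ell(g)\geq D$; hence $\ell(g)=D$, and $\gamma:=f^{-1}\circ g$ — a path in $\Omega$, continuous because $f$ is an embedding — satisfies $\ell(f\circ\gamma)=\ell(g)=D$, as required.

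The main obstacle is precisely the step that $g([0,1])\subseteq f(\Omega)$: a uniform limit of paths in $f(\Omega)$ need not remain in $f(\Omega)$, since $f(\Omega)$ is in general not closed in $\R^m$. The way I would attack this is to show that the minimizing sequence can be taken inside a single compact subset $Q\subset\Omega$, for then $g_k$, and with it $g$, takes values in the compact set $f(Q)\subseteq f(\Omega)$ and the argument closes. A natural route is to exhaust $\Omega$ by compacta $Q_j\nearrow\Omega$ with $x,y\in\interior Q_j$, solve the restricted problem inside each $f(Q_j)$ (where Arzel\`{a}--Ascoli does land inside, $f(Q_j)$ being compact), show that the restricted minima decrease to $D$, and verify that the resulting minimizers remain in a common compactum; it is here that the hypotheses on $f$ — and, in the application to quasiconformal curves, the higher integrability of Theorem \ref{higherint} — must be brought to bear. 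I expect controlling the minimizing sequence so that it does not escape towards the boundary of $\Omega$ to be the delicate point of the whole argument.
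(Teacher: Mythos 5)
Your proof of the first assertion is correct and follows essentially the same route as the paper: positive $p$-modulus of a family of segments (via the polar-coordinate/H\"older computation that works precisely because $p>n$), Fuglede's Lemma \ref{fugledelemma} to get absolute continuity of $f$ on almost every such segment, and a polygonal construction to join $x$ to $y$; the paper runs the identical scheme with the explicit family $\gamma_r(t)=(t,rt+r)$ and the weighted energy $\int\int|D(f\circ\gamma_r)|^p(t+1)\,dt\,dr$. One small imprecision: the finiteness of $\int_0^r|Df(z+tv)|\,dt$ along a fixed radial segment does \emph{not} follow from $|Df|\in L^1_{loc}$, since a single segment is Lebesgue-null in $\R^n$; you need the statement for a.e.\ direction $v$, and that is obtained by Fubini in polar coordinates together with the same H\"older estimate (with exponent $(n-1)/(p-1)<1$) that you already wrote down for the modulus bound. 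This is exactly the role of the weight $(t+1)$ in the paper's computation, so the fix is already contained in your own argument; just apply it to $|Df|$ rather than only to admissible $\rho$'s.

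For the second assertion there is a genuine gap, and you have named it yourself: in the direct method the uniform limit $g$ of the reparametrized minimizing curves need not take values in $f(\Omega)$, because $f(\Omega)$ is in general neither closed nor proper in $\R^m$ (the paper's own Theorem \ref{thmexample} shows how degenerate $f(\Omega)$ can be). Your proposed repair --- exhausting $\Omega$ by compacta, solving the restricted problems, and showing the restricted minimizers stay in a common compact set --- is left entirely unexecuted, and it is not clear how the hypotheses (an embedding in $W^{1,p}_{loc}$ with $p>n$, or even the higher integrability of Theorem \ref{higherint}) would prevent a near-shortest curve from drifting toward $\partial\Omega$. So the attainment claim is not proved in your write-up. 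For comparison, the paper does not run the direct method at all: having established rectifiable connectedness, it disposes of the existence of a length-minimizing path in one line by invoking \cite[Theorem 3.9]{Hajlasz2003}. If you want a self-contained argument you must close the compactness step; otherwise the efficient course is to quote that result, as the paper does.
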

\begin{proof}
	For simplicity we will assume that $n=2$ and $m=3$. The argument works similarly for all $n$ and $m$. Let $f(x)$ and $f(y)$ be two points in $f(\Omega)$, $x,y\in\Omega$ and without loss of generality assume that $x=(-1,0)$ and $y=(1,0)$. Consider the path family of straight line segments $\gamma_r(t)=(t,rt+r)$,$t\in[-1,0]$ starting at $x$ and landing at the $y$-axis for $r\in[-1,1]$. We will show that for almost all $r\in[-1,1]$ we  have that 
	\begin{equation}\label{eq0001}
		\int_{-1}^0 |D((f\circ \gamma_r)(t))|dt<\infty.
	\end{equation}
 Indeed notice that \[\int_{-1}^1\int_{-1}^0 |D((f\circ \gamma_r)(t))|^p(t+1)dtdr=\int_{-1}^0\int_{-(x+1)}^{(x+1)}|Df(x,y)(v)|^pdydx,\] where $v=(1,y/x-1)$. The last integral is finite  since $f\in W^{1,p}_{loc}(\Omega,\R^m)$. Thus 	$$\int_{-1}^0 |D((f\circ \gamma_r)(t))|^p(t+1)dt<\infty$$ for almost all $r\in[-1,1]$. By H\"older's inequality and since $p>2$ we have \[\int_{-1}^0 |D((f\circ \gamma_r)(t))|dt\leq C\int_{-1}^0|D((f\circ \gamma_r)(t))|^p(t+1)dt<\infty, \] for almost all $r\in[-1,1]$ as we wanted. Consider now the line segments $\delta_k=\{-1+1/k\}\times [-1+1/k,1-1/k]$, $k\in \N$ and $\delta'=\{0\}\times [-1,1]$. Let $\Gamma$ denote the family of line segments  $\gamma_{r|[-\varepsilon,0]}$ for some $r$ and $\varepsilon>0$ and connect a point of $\delta_k$ to a point in $\delta'$. This path family has positive modulus, $\modue \Gamma>0$. By Fuglede's lemma \ref{fugledelemma} we have that $f$ is absolutely continuous on almost every such path. This implies that the set $E_k\subset  \delta'$ of endpoints of these paths where $f$ is absolutely continuous is of full Lebesgue measure in $\delta'$, see for example \cite[Theorem 5.4]{Hajlasz2003}. Thus the set $E=\bigcap_k E_k$ is of full measure in $\delta'$ and $f$ is absolutely continuous on $\gamma_r$ for almost all $r\in[-1,1]$. Symmetrically, we can prove the same about absolute continuity and equation \eqref{eq0001} if we replace the point $x$ by $y$. Thus for almost all $r\in[-1,1]$ we can find a piecewise linear path $\gamma_r'$ connecting $x$ and $y$ and passing through $(0,r)$ on which $f$ is absolutely continuous. By \eqref{eq0001} we now have that $$\ell(f\circ \gamma_r')=\int_{-1}^0 |D((f\circ \gamma_r')(t))|dt<\infty,$$ as we wanted. The fact that there exists a path realizing the distance between any two points in $f(\Omega)$ follows from \cite[Theorem 3.9]{Hajlasz2003}.
\end{proof}

 Notice here that we assume $f(\Omega)$ to be a metric measure space with the $n$-Hausdorff measure $\mathcal{H}^n_d$ coming from  the metric $d$. Since $(f(\Omega),d,\mathcal{H}^n_d)$ is a metric measure space we can define the Newton--Sobolev space $N^{1,p}_{d,loc}(\Omega,f(\Omega))$. We are  going to need the fact that the Newton--Sobolev spaces with the intrinsic metric $d$ are the same with the regular Sobolev spaces.

\begin{lemma}\label{lemmasobolev}
	For any function $f:\Omega\to\R^m$, where $\Omega$ is a domain of $\R^n$ and $m\geq1$, we have that $f\in N^{1,p}_{d,loc}(\Omega,f(\Omega))$ if and only if $f\in W^{1,p}_{loc}(\Omega,\R^m)$, $1\leq p<\infty$.
\end{lemma}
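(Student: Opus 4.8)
The plan is to compare $N^{1,p}_{d,loc}(\Omega,f(\Omega))$ directly with $N^{1,p}_{loc}(\Omega,\R^m)=W^{1,p}_{loc}(\Omega,\R^m)$, the latter identification and the fact that $|Df|$ is the least $p$-weak upper gradient being Lemma~\ref{minimalgradient}. Replacing the target $\R^m$ by the subset $f(\Omega)$ with its euclidean metric changes nothing here: the upper-gradient inequality is literally the same, and the $L^p$-condition is insensitive to the basepoint since euclidean distances to two fixed points of $f(\Omega)$ differ by a bounded amount. The first step is to record two properties of the intrinsic metric $d$ on $f(\Omega)$: first, $|x-y|\le d(x,y)$ for all $x,y\in f(\Omega)$, because any rectifiable arc joining them has euclidean length at least $|x-y|$; and second, \emph{the identity map of $f(\Omega)$ from the euclidean to the intrinsic metric preserves the length of rectifiable curves}, i.e.\ $\ell_d(\sigma)=\ell_e(\sigma)$ for every rectifiable $\sigma$. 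Here $\ell_d(\sigma)\ge\ell_e(\sigma)$ is immediate from the first property, while for the converse one observes that along any partition each arc $\sigma|_{[t_{i-1},t_i]}$ is itself a rectifiable curve in $f(\Omega)$, so $d(\sigma(t_{i-1}),\sigma(t_i))\le\ell_e(\sigma|_{[t_{i-1},t_i]})$, and summing over the partition and taking the supremum gives $\ell_d(\sigma)\le\ell_e(\sigma)$. In particular a curve is $e$-rectifiable if and only if it is $d$-rectifiable, with equal lengths.

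With this in hand the upper-gradient parts are soft. If $\rho$ is a $p$-weak upper gradient of $f\colon\Omega\to(f(\Omega),d)$, then since $|x-y|\le d(x,y)$ it is also a $p$-weak upper gradient of $f\colon\Omega\to\R^m$. Conversely, let $f\in W^{1,p}_{loc}(\Omega,\R^m)$, so $|Df|\in L^p_{loc}$ is an upper gradient of $f$ into $\R^m$. Using that the family of curves having a subcurve on which the upper-gradient inequality fails has $p$-modulus zero (the usual argument summing rescaled admissible functions; cf.\ Lemma~\ref{fugledelemma}), we get that for $p$-almost every curve $\gamma\colon[a,b]\to\Omega$ the inequality $\sum_i|f(\gamma(t_i))-f(\gamma(t_{i-1}))|\le\int_\gamma|Df|\,ds<\infty$ holds along every partition; hence $f\circ\gamma$ is an $e$-rectifiable curve in $f(\Omega)$ with $\ell_e(f\circ\gamma)\le\int_\gamma|Df|\,ds$, and therefore, by the length-preservation above,
\[ d\bigl(f(\gamma(a)),f(\gamma(b))\bigr)\le\ell_d(f\circ\gamma)=\ell_e(f\circ\gamma)\le\int_\gamma|Df|\,ds. \]
Thus $|Df|$ is a $p$-weak upper gradient of $f\colon\Omega\to(f(\Omega),d)$.

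It remains to match the integrability requirements, and this is the delicate point. The direction $N^{1,p}_{d,loc}\Rightarrow W^{1,p}_{loc}$ is immediate: $f\in L^p_{loc}(\Omega,(f(\Omega),d))$ forces $f\in L^p_{loc}(\Omega,\R^m)$ via $|x-y|\le d(x,y)$, and together with the previous paragraph this gives $f\in\tilde N^{1,p}_{loc}(\Omega,\R^m)\subseteq W^{1,p}_{loc}$ by Lemma~\ref{minimalgradient}. For the reverse, given $f\in W^{1,p}_{loc}(\Omega,\R^m)$ and a ball $B(x_0,r)$ compactly contained in $\Omega$, one bounds $d(f(x),f(x_0))$ for almost every $x\in B(x_0,r)$ by the euclidean length of $f$ along the radial segment from $x_0$ to $x$: by Fuglede's lemma together with the fact that --- precisely when $p>n$ --- the family of radial segments from $x_0$ inside $B(x_0,r)$ has positive $p$-modulus, $f$ is absolutely continuous along almost every such segment, whence $d(f(x),f(x_0))\le\int_0^{|x-x_0|}|Df(x_0+\tau\theta)|\,d\tau$; integrating in polar coordinates and applying Hölder's inequality (summable exactly because $p>n$) yields $\int_{B(x_0,r)}d(f(x),f(x_0))^p\,dx\le C(n,p,r)\int_{B(x_0,r)}|Df|^p<\infty$. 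Since $f(\Omega)$ is rectifiably connected (Proposition~\ref{proppath}) the distance to any other basepoint differs by a finite constant, so $f\in L^p\bigl(B(x_0,r),(f(\Omega),d)\bigr)$, and combined with the upper-gradient statement, $f\in N^{1,p}_{d,loc}(\Omega,f(\Omega))$.

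The main obstacle is exactly this last integrability bound: a priori $d$ may be much larger than the euclidean metric, and controlling $\int d(f(x),z)^p\,dx$ locally is where Proposition~\ref{proppath} and the positivity of the modulus of the radial segment family enter --- hence the argument is run in the range $p>n$, which is the relevant one since quasiconformal curves have higher integrability (Theorem~\ref{higherint}). The remaining, upper-gradient content is purely formal once the length-preservation of the identity map is established.
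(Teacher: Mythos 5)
Your argument is correct where it applies, and the two upper-gradient halves coincide in substance with the paper's proof: both directions ultimately rest on $|x-y|\le d(x,y)$ together with the fact that the intrinsic distance between the endpoints of $f\circ\gamma$ is dominated by the euclidean length of $f\circ\gamma$, which in turn is bounded by $\int_\gamma\rho$ via partitions and absolute continuity on $p$-almost every curve. Where you genuinely diverge is the integrability step $d(f(\cdot),f(x_0))\in L^p_{loc}$. The paper reduces to $f\in C^\infty$ by a ``standard approximation argument'' and then bounds $d(f(x),f(z))$ by the length of the image of a fixed smooth path using Jensen's inequality; this is asserted for all $1\le p<\infty$ but is delicate, since mollifying $f$ changes $f(\Omega)$ and hence the intrinsic metric $d$, and the paper does not explain how the estimate passes to the limit. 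You instead bound $d(f(x),f(x_0))$ by the length of $f$ along the radial segment, justified by Fuglede's lemma and the positivity of the $p$-modulus of the radial family, and then integrate in polar coordinates with H\"older; this is more self-contained and rigorous, but, as you note, it only works for $p>n$ (for $p\le n$ the family of radial segments issuing from a point has zero $p$-modulus, so Fuglede's lemma gives no information on almost every segment). Consequently you prove the hard implication $W^{1,p}_{loc}\Rightarrow N^{1,p}_{d,loc}$ only for $p>n$, whereas the lemma is stated for all $1\le p<\infty$. In practice this is harmless: every application in the paper either uses the easy direction or invokes the lemma with the exponent $p>n$ supplied by Theorem~\ref{higherint}, and for $p\le n$ even the finiteness of $d$ on $f(\Omega)$ is unclear (Proposition~\ref{proppath} also requires $p>n$), so your restricted version is arguably the honest form of the statement. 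But if the full range $1\le p<\infty$ is wanted, the case $1\le p\le n$ is not covered by your argument and would require a separate justification.
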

\begin{proof}
	Suppose first that $f\in N^{1,p}_{d,loc}(\Omega,f(\Omega))$. Then $d(f,z)\in L^p_{loc}(\Omega)$, for some $z\in f(\Omega)$. Thus, since $|f(x)-z|\leq d(f(x),z)$, we obtain that $f\in L^p_{loc}(\Omega,\R^m)$.  Moreover, suppose that $\rho_{f,d}$ is a $p$-weak upper gradient for $f$ with respect to the measure $\mathcal{H}^n_d$ (meaning that this measure was used in the definition of the modulus). Then we have that \begin{equation}\label{eq003}
		|f(\gamma(a))-f(\gamma(b))|\leq d(f(\gamma(a)),f(\gamma(b)))\leq \int_\gamma\rho_{f,d}
	\end{equation}
for $p$-almost every rectifiable curve $\gamma:[a,b]\to \Omega$. Notice that the $p$-almost every curve here, as we said, is with respect to the measure $\mathcal{H}^n_d$. It is easy to see now that since $\mathcal{H}_e^n(A)\leq \mathcal{H}^n_d(A)$ for all measurable sets $A\subset f(\Omega)$, we have that \eqref{eq003} holds for $p$-almost every curve with respect to the $\mathcal{H}^n_e$. Thus $\rho_{f,d}$ is a weak $p$-upper gradient for $f$ with respect to the $\mathcal{H}^n_e$. Thus $f\in N^{1,p}_{loc}(\Omega,f(\Omega))$ and thus by Lemma \ref{minimalgradient} $f\in W^{1,p}_{loc}(\Omega,\R^m).$

On the other hand, suppose $f\in W^{1,p}_{loc}(\Omega,\R^m)$.   We want to show that $$d(f(x),f(z))\in L^p_{loc}(\Omega),\ \text{for some} \ z\in \R^n.$$  Since smooth functions are dense in $W^{1,p}_{loc}(\Omega,\R^m)$ by a standard approximation argument we may assume that $f\in C^\infty(\Omega,\R^m)$. For any $x\in \Omega$, let $\gamma:[a,b]\to \Omega$, for some fixed $a,b\in \R$ be a smooth path with $\gamma(a)=x$ and $\gamma(b)=z$. Notice that $f$ is absolutely continuous on $\gamma$. Then for any compact set $K\subset\Omega$ we have that 
\begin{equation}
	\int_K d(f(x),f(z))^ndx\leq \int_K \ell(f(\gamma))^ndx=\int_K\left(\int_a^b|(f\circ\gamma)'(t)|dt\right)^ndx.
\end{equation} 
Using now Jensen's inequality we obtain \begin{equation}
	\begin{aligned}
	\int_K\left(\int_a^b|(f\circ\gamma)'(t)|\right)^ndtdx&\leq \int_K\int_a^b|Df(\gamma(t))(\gamma'(t))|^ndtdx
\end{aligned}
\end{equation}
which is finite since $Df$ is smooth and thus bounded on any compact set. Thus $d(f(x),f(z))\in L^p_{loc}(\Omega)$. Moreover, let $\rho$ be an upper gradient for $f$ with respect to the euclidean metric. Then by definition 
\begin{equation}\label{eq004}
	|f(\gamma(a))-f(\gamma(b))|\leq \int_\gamma\rho
\end{equation}
for every rectifiable path $\gamma:[a,b]\to \Omega$. Notice now that for every such path we have \begin{equation}\label{eq005}
	d(f(\gamma(a)),f(\gamma(b)))\leq \ell(\gamma)=\sup_i \sum_{i=1}^n |f(x_{i+1})-f(x_i)|
\end{equation}
where the supremum is taken over all partitions $\{x_i\}_{i=1,\dots,n}$ of the path $\gamma$. Suppose now that $\gamma_i$ are the parts of $\gamma$ connecting $x_i$ and $x_{i+1}$. Then equations \eqref{eq004} and \eqref{eq005} imply that \begin{equation}
	d(f(\gamma(a)),f(\gamma(b)))\leq \sup_i \sum_{i=1}^n \int_{\gamma_i}\rho=\int_\gamma \rho.
\end{equation}
Thus $\rho$ is an upper gradient for $f$ and thus a weak $p$-upper gradient with respect to the intrinsic metric $d$. 
	\end{proof}

To prove Theorem \ref{thm5} we need to define the notion of metric differentiability, first introduced by Kirchheim in \cite{Kirchheim1994}. 
\begin{definition}
	Let $f:E\to (X,d_X)$ mapping, where $E\subset \R^n$ measurable and $(X,d_X)$ a metric space. We say that $f$ is metrically differentiable at a point $x\in E$ if there exists a seminorm $\md(f,x)(\cdot)$ on $\R^n$ such that \[d_X(f(z),f(w))-\md(f,x)(z-w)=o(|z-x|+|y-x|),\] when $z,y\to x$, $z,y\in E$.
\end{definition}
One easily sees that if such a seminorm $\md(f,x)(u)$ exists then  $$|\md(f,x)(u)|=\lim_{r\to0}\frac{ d_X(f(x+ru),f(x))}{r}.$$
The next theorem is part of \cite[Corollary 3.4]{Karmanova2007}. We note that $\omega_n$ denotes the volume of the $n$-ball.

\begin{theorem}\label{lemmakarmanova}
	Let $f:\Omega\to(X,d_X)$, where $\Omega$ domain in $\R^n$ and $(X,d_X)$ a metric space, be a function in $N^{1,p}_{loc}(\Omega,X)$, $p>n$. Then $f$ is metrically differentiable almost everywhere and for every measurable set $E\subset \R^n$ we have that
	\begin{equation}
		\int_E J(\md (f,x))dx=\int_X N(f,y,E)d\mathcal{H}^n_{d_X}(y),
	\end{equation}
where $$J(\md (f,x))=\omega_n n\left(\int_{S^{n-1}}\md (f,x)(v)^{-n}d\mathcal{H}^{n-1}(v)\right)^{-1}$$ denotes the Jacobian of the seminorm $\md (f,x)$ and \[N(f,y,E)=\# (f^{-1}(y)\cap E).\] 
\end{theorem}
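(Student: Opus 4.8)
The plan is to prove the two assertions of the theorem separately: first almost-everywhere metric differentiability of $f$, then the area formula, following the circle of ideas that Kirchheim developed for Lipschitz maps and adapting it to $N^{1,p}_{loc}$ with $p>n$. For the differentiability, note that $f(\Omega)$ is separable, being a continuous image of the separable set $\Omega$. Fix a countable dense set $\{z_k\}\subset f(\Omega)$ and a basepoint $z_0$, and form the Kuratowski embedding $\iota\colon f(\Omega)\to\ell^\infty$, $\iota(z)=(d_X(z,z_k)-d_X(z_0,z_k))_k$, which is isometric, so that $d_X(a,b)=\sup_k|d_X(a,z_k)-d_X(b,z_k)|$. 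The coordinate functions $g_k:=d_X(f(\cdot),z_k)$ are real valued, lie in $W^{1,p}_{loc}(\Omega)$ with $|\nabla g_k|\leq\rho_f=|Df|$ by Lemma \ref{minimalgradient} and the $1$-Lipschitz dependence of $d_X$ on its first argument, and hence, since $p>n$, are differentiable at almost every point by the classical Morrey embedding. Intersecting the countably many full-measure differentiability sets gives a full-measure $\Omega_0$ on which every $\nabla g_k(x)$ exists. One then shows, exactly as in Kirchheim's argument, that for a.e.\ $x$ the limit $\md(f,x)(v)=\lim_{r\to 0^+}d_X(f(x+rv),f(x))/r$ exists for every $v$, defines a seminorm, and equals $\sup_k|\langle\nabla g_k(x),v\rangle|$; the real content is the interchange of the supremum with the limit, obtained from equi-differentiability on a suitable full-measure subset.

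For the area formula I would split $\Omega$ into the set $\Omega_+$ where $\md(f,x)$ is a genuine norm and the set $\Omega_-$ where it is degenerate, discarding the null set where metric differentiability fails. Observe that $J(\md(f,x))=0$ precisely on $\Omega_-$, since a degenerate seminorm makes $\int_{S^{n-1}}\md(f,x)(v)^{-n}\,d\mathcal{H}^{n-1}(v)=\infty$. On $\Omega_+$, apply Egorov and Lusin to upgrade pointwise metric differentiability to uniform approximation on a countable Borel partition $\{A_i\}$ on each of which $d_X(f(x),f(y))$ is trapped between $(1-\varepsilon)$ and $(1+\varepsilon)$ times a fixed norm $s_i(x-y)$; refining, we may assume each $f|_{A_i}$ is bi-Lipschitz onto its image. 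The key computation is the identity $\mathcal{H}^n_s(L)=J(s)\,\mathcal{L}^n(L)$ for Borel $L\subset\R^n$ and a norm $s$, where $\mathcal{H}^n_s$ is Hausdorff measure with respect to $s$; this follows from the isodiametric inequality for $s$ and the normalization of Hausdorff measure, and it is exactly the elementary computation $J(s)=\omega_n/|B_s|$ with $|B_s|=\tfrac1n\int_{S^{n-1}}s(v)^{-n}\,d\mathcal{H}^{n-1}(v)$. Applying the classical area formula on each $A_i$ and letting $\varepsilon\to 0$ then yields $\mathcal{H}^n_{d_X}(f(A_i))=\int_{A_i}J(\md(f,x))\,dx$ when $f|_{A_i}$ is injective.

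It remains to treat the degenerate part and to account for multiplicity. On $\Omega_-$ I would show $\mathcal{H}^n_{d_X}(f(\Omega_-\cap E))=0$: metric differentiability with a degenerate seminorm forces the image of each small ball to be $d_X$-covered by few sets of small diameter in the effective lower-dimensional directions, so a Vitali covering estimate gives vanishing $n$-measure; since $J=0$ there too, neither side of the formula receives a contribution from $\Omega_-$. Finally, choosing the partition $\{A_i\}$ of $\Omega_+\cap E$ so that $f$ is injective on each piece, the multiplicity decomposes as $N(f,y,E)=\sum_i\mathbf{1}_{f(A_i)}(y)$ for $\mathcal{H}^n_{d_X}$-a.e.\ $y$, whence $\int_X N(f,y,E)\,d\mathcal{H}^n_{d_X}(y)=\sum_i\mathcal{H}^n_{d_X}(f(A_i))=\sum_i\int_{A_i}J(\md(f,x))\,dx=\int_E J(\md(f,x))\,dx$, which is the claimed formula.

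The step I expect to be the main obstacle is the non-degenerate area computation: converting pointwise metric differentiability into uniform bi-Lipschitz approximation on the pieces $A_i$, so that the classical area formula applies with controlled error, while simultaneously identifying the local distortion constant with the seminorm Jacobian through the Hausdorff-measure identity $\mathcal{H}^n_s=J(s)\,\mathcal{L}^n$. The bookkeeping that converts the overlaps of the images $f(A_i)$ into the multiplicity function $N(f,y,E)$, together with the verification that the degenerate set carries no $n$-dimensional image, are the remaining delicate points; both are standard in spirit but need care precisely because $f$ need not be injective and $\md(f,x)$ need not be a norm.
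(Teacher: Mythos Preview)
The paper does not prove this theorem at all: it is quoted verbatim as part of \cite[Corollary~3.4]{Karmanova2007} and used as a black box. So there is no ``paper's own proof'' to compare against; your proposal is an independent attempt at a result the authors simply import.

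That said, your outline is in the right spirit and follows the Kirchheim--Ambrosio--Karmanova circle of ideas. A couple of points to tighten. First, writing $\rho_f=|Df|$ and invoking Lemma~\ref{minimalgradient} is not quite right here: that lemma concerns Euclidean targets, whereas $f$ lands in an abstract metric space $(X,d_X)$, so there is no classical differential $Df$. What you actually need (and essentially say) is that each $g_k=d_X(f(\cdot),z_k)$ has $\rho_f$ as an upper gradient because $d_X(\cdot,z_k)$ is $1$-Lipschitz, hence $g_k\in W^{1,p}_{loc}(\Omega)$ with $|\nabla g_k|\le\rho_f$; since $p>n$, Morrey's embedding then gives a continuous representative and the Rademacher--Stepanov theorem yields a.e.\ differentiability. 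Second, the ``interchange of sup and limit'' step---passing from a.e.\ differentiability of each $g_k$ to existence of $\md(f,x)(v)=\sup_k\langle\nabla g_k(x),v\rangle$ as a genuine limit uniform in $v$---is where the real work lies, and your one-line appeal to ``equi-differentiability on a suitable full-measure subset'' hides a nontrivial argument (in Kirchheim this uses weak$^*$ compactness of the gradients in $L^\infty$; in the Sobolev setting one typically argues via Lebesgue points of the $\ell^\infty$-valued gradient or via an $\varepsilon$-$\delta$ decomposition as in Karmanova). The area-formula half of your sketch is standard and correct in outline; the identity $\mathcal{H}^n_s=J(s)\,\mathcal{L}^n$ and the bi-Lipschitz decomposition via Egorov/Lusin are exactly the ingredients Kirchheim and Karmanova use.
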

\subsection{Quasiconformal and quasisymmetric maps between metric spaces}
Here we collect results and definitions from the theory of quasiconformal and quasisymmetric mappings between metric spaces that we are going to need in our study.

We first define the notion of a locally linearly connected metric space. If $X$ is metric space, by $B(x,r)$ and $\overline{B}(x,r)$ we denote an open ball and closed ball respectively, centred at $x\in X$ of radius $r>0$. 
\begin{definition}[Local linear connectivity]
	A metric space $X$ is said to be \emph{linearly locally connected} if there exists a constant $c\geq1$ such that for each $x\in X$ and $r>0$ the following two conditions hold:
	\begin{enumerate}
		\item for any pair of points in $B(x,r)$ there exists a continuum in $B(x,cr)$ that contains both points
		\item for any pair of points in $X\setminus \overline{B}(x,r)$ there exists a continuum in $X\setminus \overline{B}(x,r/c)$ that contains both points
	\end{enumerate}
\end{definition}
We also require the notions of a metric space with locally bounded $n$-geometry and that of an Ahlfors $n$-regular metric space.

\begin{definition}[Locally bounded $n$-geometry]
	A metric measure space $(X,d_X,\mu)$ has \emph{locally bounded $n$-geometry} if it is separable, pathwise connected, locally compact and if there exist constants $C\geq 1$ and $0\leq\lambda\leq1$ and a decreasing function $\psi:(0,\infty)\to(0,\infty)$ such that the following holds: each point in $X$ has an open neighbourhood $U$, compactly contained in $X$, such that \[\mu(B(y,r))\leq C r^n,\] for all balls $B(y,r)\subset U$ and that \[\modu(\Delta(E,F;B(y,r)))\geq \psi(t),\] whenever $B(y,r)\subset U$ and $E,F\subset B(y,\lambda r)$ disjoint, non-degenerate continua with $\dist(E,F)\leq t\min\{\diam E,\diam F\}$. Here $\Delta(E,F;B(y,r))$ denotes the family of paths connecting $E$ to $F$ through $B(y,r)$.
\end{definition} 

\begin{definition}[Ahlfors $n$-regular]
	A metric space $(X,d_X)$ is called \emph{Ahlfors $n$-regular}, $n>0$, if there exists a constant $C\geq 1$ such that \[r^n/C\leq \mathcal{H}^n(B(x,r))\leq Cr^n,\] for all balls $B(x,r)$ in $X$ with $0<r<\diam X$. It is called upper (respectively lower) Ahlfors $n$-regular if the right (respectively left) hand side inequality holds.
\end{definition}
Next we define the notion of quasisymmetry.

\begin{definition}
	A homeomorphism $f:X\to Y$ between metric spaces is said to be \emph{$\eta$-quasisymmetric} for some homeomorphism $\eta:[0,\infty)\to[0,\infty)$ if \[\frac{d_Y(f(x),f(y))}{d_Y(f(x),f(z))}\leq \eta\left(\frac{d_X(x,y)}{d_X(x,z)}\right)\] for all distinct points $x,y,z\in X$. A homeomorphism $f :X \to Y$
	between metric spaces is called \emph{locally $\eta$-quasisymmetric} if each point in $X$ has a neighborhood in which $f$ is $\eta$-quasisymmetric.
\end{definition}
The next theorem that we need is a result due to Williams.

\begin{theorem}[Theorem 1.1 \cite{Williams2012}]\label{williams}
	Let $Q>1$ and $(X,\mu)$, $(Y,\nu)$ be separable, locally finite measure metric spaces and let $f:X\to Y$ be a homeomorphism. Then the following conditions are equivalent:\begin{enumerate}
		\item $f\in N^{1,Q}_{loc}(X,Y)$ and for $\mu$ almost every $x\in X$,\[\rho_f(x)^Q\leq K \mu_f(x).\]
		\item For every family $\Gamma$ of paths in $X$, \[\modu{}_Q(\Gamma)\leq K\modu{}_Q(f(\Gamma)).\]
	\end{enumerate}
\end{theorem}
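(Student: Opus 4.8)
The plan is to prove the two implications $(1)\Rightarrow(2)$ and $(2)\Rightarrow(1)$ separately, relying on two recurring tools. The first is a \emph{change of variables inequality for the volume derivative}: since $f$ is a homeomorphism between separable, locally finite measure spaces, the set function $\lambda(E)=\nu(f(E))$ is a Radon measure on $X$ whose upper derivative with respect to $\mu$ is exactly $\mu_f$, so by differentiation of measures $\int_A\mu_f\,d\mu\leq\lambda(A)=\nu(f(A))$ for every measurable $A$, and hence $\int_X (g\circ f)\,\mu_f\,d\mu\leq\int_Y g\,d\nu$ for every nonnegative Borel $g$ on $Y$. This is the metric analogue of $\int (g\circ f)|J_f|\leq\int g$, an inequality rather than an equality because $\lambda$ need not be absolutely continuous with respect to $\mu$. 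The second tool is the \emph{length-distortion inequality along almost every path}: if $f\in N^{1,Q}_{loc}(X,Y)$ with minimal weak upper gradient $\rho_f$, then for $Q$-almost every rectifiable $\gamma$ the image $f\circ\gamma$ is rectifiable with metric speed bounded by $\rho_f(\gamma)\,|\gamma'|$, whence $\int_{f\circ\gamma}\tilde\rho\,ds_Y\leq\int_\gamma(\tilde\rho\circ f)\,\rho_f\,ds_X$ for every Borel $\tilde\rho\ge0$; this follows from the upper gradient inequality together with Lemma \ref{lemmaaprox}.

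For $(1)\Rightarrow(2)$ I would fix a path family $\Gamma$ in $X$ and an admissible function $\tilde\rho$ for $f(\Gamma)$, and set $\rho=(\tilde\rho\circ f)\,\rho_f$. By the length-distortion inequality, $\int_\gamma\rho\,ds\ge\int_{f\circ\gamma}\tilde\rho\,ds\ge1$ for $Q$-almost every $\gamma\in\Gamma$, so $\rho$ is admissible for $\Gamma$ up to a family of $Q$-modulus zero, which does not affect the modulus. Therefore
\[
\modu{}_Q(\Gamma)\leq\int_X\rho^Q\,d\mu=\int_X(\tilde\rho\circ f)^Q\,\rho_f^Q\,d\mu\leq K\int_X(\tilde\rho\circ f)^Q\,\mu_f\,d\mu\leq K\int_Y\tilde\rho^Q\,d\nu,
\]
where the second inequality is the pointwise hypothesis $\rho_f^Q\le K\mu_f$ and the last is the change of variables inequality with $g=\tilde\rho^Q$. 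Taking the infimum over admissible $\tilde\rho$ yields $\modu{}_Q(\Gamma)\leq K\,\modu{}_Q(f(\Gamma))$.

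The direction $(2)\Rightarrow(1)$ is the substantial one. I would first produce a $Q$-integrable candidate upper gradient: by differentiation of measures $\mu_f<\infty$ $\mu$-almost everywhere, and $g:=(K\mu_f)^{1/Q}$ satisfies $\int_X g^Q\,d\mu=K\int_X\mu_f\,d\mu\leq K\,\nu(f(X))<\infty$ locally, so $g\in L^Q_{loc}(X)$. The crux is to show $g$ is a $Q$-weak upper gradient of $f$, for then $f\in N^{1,Q}_{loc}(X,Y)$, and minimality of $\rho_f$ forces $\rho_f\le g$, i.e.\ $\rho_f^Q\le K\mu_f$ almost everywhere, which is $(1)$. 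To establish the upper gradient property I would argue by contradiction: let $\Gamma_0$ be the family of rectifiable paths on which $d_Y(f(\gamma(a)),f(\gamma(b)))\le\int_\gamma g\,ds$ fails, suppose $\modu{}_Q(\Gamma_0)>0$, and build from this failure an admissible function for $f(\Gamma_0)$ of small $Q$-energy, contradicting $\modu{}_Q(\Gamma_0)\le K\,\modu{}_Q(f(\Gamma_0))$. Concretely, the modulus inequality transports test functions on $Y$ back to $X$, and combined with the interpretation of $\mu_f$ as infinitesimal volume distortion it forces the line integral of $g$ to dominate the displacement of $f$ along $Q$-almost every path.

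The main obstacle is precisely this last step: passing from the \emph{global} modulus inequality to a \emph{pointwise} bound by $\mu_f$, in the stated generality where neither $X$ nor $Y$ is assumed Ahlfors regular nor to support a Poincar\'e inequality, so that the usual Loewner/Poincar\'e machinery is unavailable. The delicate point is to extract infinitesimal information about $f$ at almost every point from families of paths; I would handle this through a careful differentiation-of-measures argument relating $\mu_f$ to the action of $f$ on small balls, together with the standard fact that altering a path family on a $Q$-null subfamily leaves the modulus unchanged, which lets one discard the exceptional paths where absolute continuity or the upper gradient inequality fails.
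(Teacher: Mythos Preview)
This theorem is not proved in the paper: it is quoted verbatim from \cite{Williams2012} and used as a black box (see the proofs of Theorems \ref{thm2}, \ref{thm4}, \ref{thm6}). There is therefore no proof in the paper against which to compare your proposal.

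That said, a brief assessment of your sketch on its own terms. The direction $(1)\Rightarrow(2)$ is the standard pull-back argument and is correct as written: the two tools you isolate (the change-of-variables inequality $\int_X(g\circ f)\mu_f\,d\mu\le\int_Y g\,d\nu$ and the path-integral bound $\int_{f\circ\gamma}\tilde\rho\le\int_\gamma(\tilde\rho\circ f)\rho_f$ for $Q$-a.e.\ $\gamma$) are exactly what is needed, and the chain of inequalities you give is the right one.

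For $(2)\Rightarrow(1)$ you identify the correct candidate $g=(K\mu_f)^{1/Q}$ and the correct goal (show $g$ is a $Q$-weak upper gradient), and you are honest that the crux is unproved. But your contradiction sketch (``build from this failure an admissible function for $f(\Gamma_0)$ of small $Q$-energy'') is not a proof: you have not said how to manufacture such a test function on $Y$, and this is precisely where the difficulty lies in the absence of Ahlfors regularity or a Poincar\'e inequality. Williams' actual argument does not proceed by contradiction on the family $\Gamma_0$; rather, it constructs explicit test functions on $Y$ supported on small balls around $f(x)$, applies the modulus inequality to radial path families in $X$, and uses a covering/differentiation argument to recover the pointwise bound $\rho_f^Q\le K\mu_f$ at Lebesgue points. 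Your proposal names the obstacle correctly but does not supply the mechanism to overcome it.
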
 
Another result that we are going to use is a theorem due to Balogh, Koskela and Rogovin, \cite[Theorem 5.1]{Balogh2007} which tells us when a metrically quasiconformal map belongs to the right Sobolev class.
\begin{theorem}[Theorem 5.1, \cite{Balogh2007}]\label{balogh}
		Let $f:X\to Y$ be a homeomorphism between two locally Ahlfors $n$-regular metric spaces. Assume that $X$ is proper and supports a $1$-Poincare inequality. Suppose that there exists constant $H<\infty$ such that \[H_f(x):=\liminf_{r\to 0}\frac{L_f(x,r)}{\ell(x,r)}<H\]  for Lebesgue almost every $x\in X$. Then $f\in N^{1,n}_{loc}(X,Y)$.
\end{theorem}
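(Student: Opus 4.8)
The statement is \cite[Theorem 5.1]{Balogh2007}, so what follows is the idea of its proof rather than a full argument. The plan is to produce an $n$-weak upper gradient of $f$ lying in $L^n_{loc}(X)$: since $f$ is a homeomorphism it is continuous, and local Ahlfors $n$-regularity together with local compactness forces $f$ to send relatively compact sets to bounded sets, so $f\in L^n_{loc}(X,Y)$ automatically, and then an $L^n_{loc}$ upper gradient puts $f$ in $N^{1,n}_{loc}(X,Y)$ by definition. Fix a ball $B_0=B(x_0,R)\Subset X$ small enough that $X$ is Ahlfors $n$-regular and supports the $1$-Poincar\'e inequality on $2B_0$, and $Y$ is Ahlfors $n$-regular on $f(2B_0)$, and work there.

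First I would introduce the discrete gradients $g_r(x)=L_f(x,r)/r$ on $B_0$. Because $f$ is onto, any $z=f(y)$ with $d_Y(f(x),z)<\ell_f(x,r)$ satisfies $d_X(x,y)<r$, so $B(f(x),\ell_f(x,r))\subset f(B(x,r))$, and lower Ahlfors regularity of $Y$ gives $\nu(f(B(x,r)))\geq c\,\ell_f(x,r)^n$; when $r$ runs through a sequence realising $H_f(x)$ this yields $L_f(x,r)^n\leq C H^n\,\nu(f(B(x,r)))$. A Fubini computation,
\[\int_{B_0}\nu(f(B(x,r)))\,d\mu(x)=\int_Y\mu\big(B_0\cap B(f^{-1}(z),r)\big)\,d\nu(z)\leq C r^n\,\nu(f(2B_0)),\]
using upper Ahlfors regularity of $X$, then gives the key uniform bound $\int_{B_0}g_r^n\,d\mu\leq C\,\nu(f(2B_0))<\infty$ for all small $r$. (Care is needed here because, with a $\liminf$, the ``good'' scales depend on $x$; as in \cite{Balogh2007} one first replaces $g_r$ by $\liminf_{\rho\le r}g_\rho$.) By reflexivity of $L^n(B_0)$ one extracts $g_{r_j}\rightharpoonup g$, and by Mazur's lemma convex combinations of the $g_{r_j}$ converge to $g$ strongly in $L^n(B_0)$, hence, by Lemma \ref{lemmaaprox}, in $L^1$ along $n$-almost every rectifiable curve.

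The main obstacle is to show that a fixed multiple $C g$ of the limit is an $n$-weak upper gradient, equivalently that $f$ is absolutely continuous along $n$-almost every curve with metric speed $\leq C g$. Along a rectifiable $\gamma$ one chains: subdividing $\gamma$ into arcs of length $\approx r_j$ gives $d_Y(f(\gamma(a)),f(\gamma(b)))\lesssim\sum_i L_f(x_i,C r_j)\approx\int_\gamma g_{r_j}$, and one wants to pass to the limit. For a $\limsup$ hypothesis on $H_f$ the chaining bound is available at all small scales simultaneously and the passage to the limit is the classical Heinonen--Koskela argument, in which the $1$-Poincar\'e inequality already plays the role of forcing the oscillation of $f$ on a ball to be controlled by an average of a discrete gradient on a dilate (telescoped over a dyadic chain of balls, this gives the absolute continuity on curves). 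With a $\liminf$ the chaining estimate only holds at $x$-dependent scales, and bridging this gap while keeping the Poincar\'e/telescoping machinery intact is precisely the extra technical content of \cite{Balogh2007}; properness of $X$ is used throughout to guarantee that balls are compact, so that $L_f(x,r)$ is attained and the Vitali covering and Arzel\`a--Ascoli steps in the telescoping argument go through. Once $C g\in L^n_{loc}(X)$ is shown to be an $n$-weak upper gradient of $f$, the conclusion $f\in N^{1,n}_{loc}(X,Y)$ follows.
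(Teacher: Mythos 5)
The paper contains no proof of this statement: it is quoted verbatim as Theorem 5.1 of \cite{Balogh2007} and used as a black box, so there is no in-paper argument to compare yours against. Judged on its own, your sketch correctly reproduces the architecture of a Heinonen--Koskela-type argument and correctly locates where each hypothesis enters: surjectivity plus lower Ahlfors regularity of $Y$ give $\ell_f(x,r)^n\lesssim\nu(f(B(x,r)))$ via the inclusion $B(f(x),\ell_f(x,r))\subset f(B(x,r))$; upper Ahlfors regularity of $X$ gives the Fubini bound $\int_{B_0}\nu(f(B(x,r)))\,d\mu\lesssim r^n\,\nu(f(2B_0))$; properness and the Poincar\'e inequality enter in the covering and telescoping steps. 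Those individual estimates are sound.

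As a proof, however, the sketch is incomplete at precisely the point that constitutes the content of the theorem. Under a $\liminf$ hypothesis the inequality $L_f(x,r)\le H\,\ell_f(x,r)$ holds only along an $x$-dependent sequence of radii, so neither the uniform bound $\int_{B_0}g_r^n\,d\mu\le C$ at a fixed scale $r$ nor the chaining estimate $d_Y(f(\gamma(a)),f(\gamma(b)))\lesssim\int_\gamma g_{r_j}$ at a common scale along a curve is actually available; your proposed repair of replacing $g_r$ by an infimum over smaller scales destroys exactly the common-scale structure that the chaining and the Mazur-lemma step require. You acknowledge this and defer it to \cite{Balogh2007}, which is honest, but what you have written then establishes only the classical $\limsup$ version, not the quoted statement. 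It is also worth noting that the argument in \cite{Balogh2007} does not proceed via weak $L^n$ limits of the discrete gradients $L_f(\cdot,r)/r$: it proves absolute continuity on $n$-almost every curve directly, covering a given curve by balls $B(x_i,r_i)$ with each $r_i$ chosen from the good set of scales at $x_i$ and running a Vitali-type selection, so that the variation of $f\circ\gamma$ is controlled by $\bigl(\sum_i\nu(f(B(x_i,r_i)))\bigr)^{1/n}$, and the resulting weak upper gradient is essentially a restricted maximal function of the pullback measure $A\mapsto\nu(f(A))$. Since the present paper imports the result by citation, none of this affects the paper itself.
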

\subsection{Properties of quasiconformal curves}
Here we describe the properties of quasiconformal curves that we are going to need. We start with the higher integrability.
\begin{theorem}[Theorem 1.3, \cite{Onninen2021}]\label{higherint}
	Let $f:\Omega\to \R^m$ be a quasiconformal $\omega$-curve for a constant coefficient $n$-volume form in $\R^m$. Then there exists some $p>n$ such that $f\in W^{1,p}_{loc}(\Omega, \R^m)$. 	
\end{theorem}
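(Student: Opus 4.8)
The plan is to upgrade the pointwise distortion inequality \eqref{eq03} to a reverse Hölder (self-improving) inequality for $|Df|^n$ and then invoke Gehring's lemma, exactly as in the classical higher-integrability proofs for quasiregular mappings. The first step is bookkeeping. Since $\omega=\sum_I\phi_I\,dx_I$ has constant coefficients, the comass norm $||\omega||$ is a positive constant, so \eqref{eq03} reads $|Df(x)|^n\le C\,(\star f^*\omega)$ a.e.\ with $C=K/||\omega||$, and in particular $\star f^*\omega\ge 0$ a.e. Next I would record that $f^*\omega=\sum_I\phi_I\,df_{i_1}\wedge\cdots\wedge df_{i_n}=d\beta$ with $\beta=\sum_I\phi_I\,f_{i_1}\,df_{i_2}\wedge\cdots\wedge df_{i_n}$, and that subtracting a constant vector $c\in\R^m$ from $f$ leaves $f^*\omega$ unchanged, so $d\beta_c=f^*\omega$ also holds for the form $\beta_c$ obtained by replacing each $f_{i_1}$ with $f_{i_1}-c_{i_1}$. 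These identities are to be read distributionally and are valid because $f\in W^{1,n}_{loc}(\Omega,\R^m)$; this is precisely the borderline case of weak continuity of minors, and it is the one genuinely delicate point.

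The heart of the proof is a Caccioppoli-type estimate. Fix a ball $B=B(x_0,r)$ with $2B\Subset\Omega$, a cutoff $\eta\in C_c^\infty(2B)$ with $\eta\equiv 1$ on $B$ and $|\nabla\eta|\le C/r$, and set $c=f_{2B}$, the coordinatewise average of $f$ over $2B$. Multiplying $|Df|^n\le C\,\star f^*\omega$ by $\eta^n\ge 0$, integrating, and using Stokes' theorem together with $|\beta_c|\le C\,|f-f_{2B}|\,|Df|^{n-1}$, I would obtain
$$\int\eta^n|Df|^n\,dx\le C\int\eta^n\,f^*\omega=-C\int d(\eta^n)\wedge\beta_c\le C\int_{2B}\eta^{n-1}|\nabla\eta|\,|f-f_{2B}|\,|Df|^{n-1}\,dx.$$
Hölder's inequality with the conjugate exponents $\tfrac{n}{n-1}$ and $n$ then gives $\int\eta^n|Df|^n\,dx\le C\big(\int\eta^n|Df|^n\,dx\big)^{(n-1)/n}\big(\int_{2B}|\nabla\eta|^n|f-f_{2B}|^n\,dx\big)^{1/n}$, and since $f\in W^{1,n}_{loc}$ makes the left-hand side finite I can absorb it to conclude
$$\int_B|Df|^n\,dx\le\frac{C}{r^n}\int_{2B}|f-f_{2B}|^n\,dx.$$

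To close the loop I would invoke the Sobolev--Poincaré inequality with an exponent $q<n$ whose Sobolev conjugate is at least $n$ (for instance $q=n/2$ when $n\ge 2$), which bounds $\fint_{2B}|f-f_{2B}|^n\,dx$ by $Cr^n\big(\fint_{2B}|Df|^q\,dx\big)^{n/q}$. Combining this with the Caccioppoli estimate yields the reverse Hölder inequality
$$\fint_B|Df|^n\,dx\le C\Big(\fint_{2B}|Df|^q\,dx\Big)^{n/q}\qquad(q<n),$$
valid for every ball with $2B\Subset\Omega$. Gehring's lemma (the self-improvement of reverse Hölder inequalities) then produces $\varepsilon>0$ with $|Df|^n\in L^{1+\varepsilon}_{loc}(\Omega)$, i.e.\ $f\in W^{1,p}_{loc}(\Omega,\R^m)$ with $p=n(1+\varepsilon)>n$, which is the claim.

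I expect the main obstacle to be making the second step rigorous: justifying the distributional integration by parts $\int\eta^n f^*\omega=-\int d(\eta^n)\wedge\beta_c$ for maps only in $W^{1,n}_{loc}$ (equivalently, that $df_{i_1}\wedge\cdots\wedge df_{i_n}$ is the distributional differential of $f_{i_1}\,df_{i_2}\wedge\cdots\wedge df_{i_n}$), and checking that the cross term $|f-f_{2B}|\,|Df|^{n-1}$ is integrable enough to run the Hölder estimate and the absorption. Everything after that is the standard Caccioppoli--Poincaré--Gehring machinery.
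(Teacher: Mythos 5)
The paper does not prove this statement at all: it is imported verbatim as Theorem 1.3 of the cited Onninen--Pankka reference, and that reference establishes it by exactly the route you describe --- exactness of the constant-coefficient form gives a Caccioppoli-type estimate via Stokes, Sobolev--Poincar\'e turns it into a weak reverse H\"older inequality, and Gehring's lemma self-improves the exponent. Your outline is correct (including your identification of the one delicate point, the distributional identity $\int \eta^n f^*\omega = -\int d(\eta^n)\wedge\beta_c$ at the borderline exponent $W^{1,n}_{loc}$, which holds by smooth approximation since the $(n-1)$-minors converge in $L^{n/(n-1)}_{loc}$ and $f-f_{2B}$ lies in every $L^p_{loc}$), so there is nothing to add beyond noting that your argument is the one in the source rather than anything appearing in this paper.
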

This theorem has some corollaries which will be important for what follows.
\begin{corollary}\label{lemmadif}
		Let $f:\Omega\to \R^m$ be a quasiconformal $\omega$-curve for a constant coefficient $n$-volume form in $\R^m$. Then $f$ is almost everywhere differentiable. 
\end{corollary}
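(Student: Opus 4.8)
The plan is to deduce almost everywhere differentiability directly from the higher integrability of quasiconformal curves, combined with the classical fact that a Sobolev map whose exponent exceeds the dimension is differentiable almost everywhere. First I would invoke Theorem~\ref{higherint}: since $f$ is a quasiconformal $\omega$-curve for a constant coefficient $n$-volume form, there exists some $p>n$ such that $f\in W^{1,p}_{loc}(\Omega,\R^m)$. By Lemma~\ref{minimalgradient} this is the same as $f\in N^{1,p}_{loc}(\Omega,\R^m)$ with least $p$-weak upper gradient $|Df|$, and in particular $f$ (or rather its continuous representative, which coincides with $f$ since $f$ is an embedding) has a well-defined distributional differential $Df\in L^p_{loc}$.

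It then remains to recall the standard argument that any $u\in W^{1,p}_{loc}(\Omega,\R^m)$ with $p>n$ is differentiable, in the classical sense, at almost every point, with classical differential equal to $Df$ almost everywhere. For almost every $x\in\Omega$ the point $x$ is a Lebesgue point of $Df$ in the $L^p$-averaged sense, i.e.
\[
\frac{1}{|B(x,r)|}\int_{B(x,r)}|Df(y)-Df(x)|^p\,dy\longrightarrow 0\qquad\text{as }r\to 0.
\]
Fix such an $x$ and set $g(y)=f(y)-f(x)-Df(x)(y-x)$, which is continuous with distributional differential $Dg(y)=Df(y)-Df(x)$. Applying Morrey's inequality (the local embedding $W^{1,p}\hookrightarrow C^{0,1-n/p}$, used in scale-invariant form on the ball $B(x,2r)$) yields
\[
\sup_{y\in B(x,r)}\frac{|g(y)-g(x)|}{r}\le C\left(\frac{1}{|B(x,2r)|}\int_{B(x,2r)}|Df(y)-Df(x)|^p\,dy\right)^{1/p}.
\]
Since $g(x)=0$ and the right-hand side tends to $0$ as $r\to 0$, taking $r=|y-x|$ gives $|f(y)-f(x)-Df(x)(y-x)|=o(|y-x|)$ as $y\to x$, which is exactly differentiability of $f$ at $x$ with derivative $Df(x)$. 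Since the set of such $x$ has full measure in $\Omega$, the corollary follows.

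I do not expect any genuine obstacle here: the entire substance is contained in Theorem~\ref{higherint}, and the remainder is a routine invocation of Morrey's inequality together with the Lebesgue differentiation theorem. The only point worth emphasizing is that one truly needs the \emph{strict} gain of integrability $p>n$: a map merely in $W^{1,n}_{loc}(\Omega,\R^m)$ need not be differentiable almost everywhere, so mere membership in the Sobolev class of Definition~\ref{def0} would not be enough, and it is precisely the higher integrability of quasiconformal curves that makes the argument work.
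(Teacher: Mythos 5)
Your proposal is correct and matches the paper's intent exactly: the paper states this as an immediate corollary of Theorem~\ref{higherint}, relying on the classical fact (which you spell out via Morrey's inequality at $L^p$-Lebesgue points of $Df$) that maps in $W^{1,p}_{loc}(\Omega,\R^m)$ with $p>n$ are differentiable almost everywhere. Your closing remark that the strict gain $p>n$ is essential is also the right point to emphasize.
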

\begin{corollary}
		Let $f:\Omega\to \R^m$ be a quasiconformal $\omega$-curve for a constant coefficient $n$-volume form in $\R^m$. Then $f$ satisfies Lusin's (N) condition, meaning that $f$ sends sets of zero measure to sets of zero $n$-Hausdorff measure.
\end{corollary}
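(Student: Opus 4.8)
The plan is to derive this directly from the higher integrability of Theorem \ref{higherint}. Since that result gives $f\in W^{1,p}_{loc}(\Omega,\R^m)$ for some exponent $p>n$, it is enough to establish the following standard fact: every $g\in W^{1,p}_{loc}(\Omega,\R^m)$ with $p>n$ maps Lebesgue-null sets to sets of $\mathcal{H}^n$-measure zero in $\R^m$. The single analytic ingredient is the Morrey oscillation estimate on balls: there is a constant $C=C(n,p)$ so that for every ball $B=B(x_0,r)$ with $\overline B\subset\Omega$,
\begin{equation}\label{eq:morrey-plan}
\diam g(B)\le C\, r^{\,1-n/p}\Big(\int_{B}|Dg|^p\,dx\Big)^{1/p}.
\end{equation}
I would obtain \eqref{eq:morrey-plan} from the classical Morrey inequality applied on the convex set $B$ (the Riesz potential bound for $|g(x)-g_B|$ followed by H\"older's inequality, which needs $p>n$); here $|Dg|$ is also the minimal $p$-weak upper gradient by Lemma \ref{minimalgradient}, although only the classical Sobolev statement is used.

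Next I would localise. Writing $\Omega$ as an increasing union of compact subsets and using countable subadditivity of $\mathcal H^n$, it suffices to show $\mathcal H^n(g(E))=0$ whenever $E$ has Lebesgue measure zero and $\overline E$ is compact in $\Omega$. Given $\varepsilon>0$, pick an open set $U$ with $E\subset U$, $\overline U\subset\Omega$ and $|U|<\varepsilon$, and for each $x\in E$ a ball $B(x,r_x)$ with $\overline{B(x,r_x)}\subset U$. The Besicovitch covering theorem yields a countable subfamily $\{B_i=B(x_i,r_i)\}_i$ covering $E$ with overlap bounded by a dimensional constant $N=N(n)$; hence $\sum_i |B_i|\le N|U|<N\varepsilon$, so $\sum_i r_i^{\,n}\le c_n\varepsilon$ for a dimensional constant $c_n$, and moreover $\sum_i\int_{B_i}|Dg|^p\le N\int_U|Dg|^p<\infty$.

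Applying \eqref{eq:morrey-plan} to each $B_i$, raising to the $n$-th power, summing, and using H\"older's inequality with the conjugate exponents $\frac{p}{p-n}$ and $\frac{p}{n}$ gives
\begin{equation}\label{eq:sum-plan}
\begin{aligned}
\sum_i\big(\diam g(B_i)\big)^n
&\le C^n\sum_i r_i^{\,n(1-n/p)}\Big(\int_{B_i}|Dg|^p\Big)^{n/p}\\
&\le C^n\Big(\sum_i r_i^{\,n}\Big)^{\frac{p-n}{p}}\Big(\sum_i\int_{B_i}|Dg|^p\Big)^{\frac{n}{p}}
\le C^n (c_n\varepsilon)^{\frac{p-n}{p}}\big(N\,\|Dg\|_{L^p(U)}^{p}\big)^{\frac{n}{p}}.
\end{aligned}
\end{equation}
Since $g(E)\subset\bigcup_i g(B_i)$ and each $g(B_i)$ has diameter $\diam g(B_i)$, the definition of Hausdorff measure gives $\mathcal H^n(g(E))\le c_n'\sum_i(\diam g(B_i))^n$, and the right-hand side of \eqref{eq:sum-plan} tends to $0$ as $\varepsilon\to0$ because $p>n$ makes the exponent $\frac{p-n}{p}$ strictly positive. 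Therefore $\mathcal H^n(g(E))=0$, which yields the corollary.

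I do not expect a genuine obstacle here. The one point that really matters is that Theorem \ref{higherint} improves the exponent to some $p>n$ \emph{strictly}: that is precisely what makes the Morrey estimate \eqref{eq:morrey-plan} available and, equally importantly, what forces the exponent $\frac{p-n}{p}$ in \eqref{eq:sum-plan} to be positive so that the bound vanishes with $\varepsilon$. Everything else is a routine covering argument, and one could alternatively simply cite the classical fact that $W^{1,p}_{loc}$-maps with $p>n$ satisfy Lusin's condition (N); including the short computation above keeps the exposition self-contained.
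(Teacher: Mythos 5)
Your proof is correct and follows exactly the route the paper intends: the paper gives no explicit argument, deriving the corollary directly from the higher integrability Theorem \ref{higherint} together with the classical fact that $W^{1,p}_{loc}$-maps with $p>n$ satisfy Lusin's condition (N), which is precisely the Morrey-plus-covering computation you wrote out.
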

This immediately implies that a quasiconformal curve for a constant coefficient form satisfies the area formula, see for example \cite{Hajlasz2000} or \cite{Evans1992}.

\begin{theorem}\label{lemmaarea}
		Let $f:\Omega\to \R^m$ be a function in $W^{1,p}_{loc}(\Omega,\R^m)$ for some $p\geq1$ that satisfies Lusin's (N) condition. Then
		\begin{equation}
			\int_E h(x)\sqrt{\det (Df^T(x)Df(x))}dx=\int_{f(E)}\sum_{x\in f^{-1}(y)}h(x)d\mathcal{H}^n(y),
		\end{equation}
		holds for all measurable functions $h:\R^n\to [0,\infty]$ and all measurable sets $E\subset\Omega$.
\end{theorem}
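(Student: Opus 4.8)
The plan is to reduce the identity to the classical area formula for Lipschitz (indeed $C^1$) maps via a Lusin-type decomposition of the domain, and then to absorb the exceptional null set using the hypothesis that $f$ satisfies Lusin's condition (N).

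First I would recall that every $f\in W^{1,p}_{loc}(\Omega,\R^m)$ with $p\geq 1$ is approximately differentiable at almost every point of $\Omega$, the approximate differential being the weak differential $Df(x)$ (a classical theorem of Federer; see \cite{Evans1992}). Combining this with Whitney's extension theorem in the form of the $C^1$ Lusin approximation of Sobolev maps (again \cite{Evans1992}), for every $j\in\N$ there exist a closed set $F_j\subset\Omega$ and a map $g_j\in C^1(\R^n,\R^m)$ with $f=g_j$ on $F_j$, with $Df(x)=Dg_j(x)$ for almost every $x\in F_j$, and with $\bigl|\Omega\setminus\bigcup_j F_j\bigr|=0$. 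Replacing $F_j$ by $F_j\setminus\bigcup_{i<j}F_i$ we may take the $F_j$ pairwise disjoint, so $\Omega=Z\sqcup\bigsqcup_j F_j$ with $|Z|=0$.

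Now fix a measurable $E\subset\Omega$ and a measurable $h\geq 0$. Since $f=g_j$ on $F_j$ we have $g_j^{-1}(y)\cap F_j=f^{-1}(y)\cap F_j$, and since $Df=Dg_j$ a.e.\ on $F_j$ the Jacobians agree a.e.\ there; hence the classical weighted area formula applied to the $C^1$ map $g_j$ on the set $F_j\cap E$ reads
\[
\int_{F_j\cap E} h(x)\sqrt{\det\bigl(Df^T(x)Df(x)\bigr)}\,dx=\int_{\R^m}\Bigl(\sum_{x\in f^{-1}(y)\cap F_j\cap E}h(x)\Bigr)\,d\mathcal{H}^n(y).
\]
Summing over $j$, using the monotone convergence theorem (all integrands are nonnegative), and using $E\setminus Z=\bigsqcup_j(F_j\cap E)$, we obtain
\[
\int_{E\setminus Z} h\,\sqrt{\det\bigl(Df^T Df\bigr)}\,dx=\int_{\R^m}\Bigl(\sum_{x\in f^{-1}(y)\cap(E\setminus Z)}h(x)\Bigr)\,d\mathcal{H}^n(y).
\]
On the left, $|Z|=0$ makes the integral over $E\setminus Z$ equal to the one over $E$. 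On the right, here is where Lusin's (N) condition enters: $\mathcal{H}^n(f(Z))=0$, the nonnegative function $y\mapsto\sum_{x\in f^{-1}(y)\cap Z\cap E}h(x)$ is supported in $f(Z)$, and since every subset of an $\mathcal{H}^n$-null set is $\mathcal{H}^n$-measurable with measure zero, this function has vanishing integral. Therefore $\sum_{x\in f^{-1}(y)\cap E}h(x)=\sum_{x\in f^{-1}(y)\cap(E\setminus Z)}h(x)$ for $\mathcal{H}^n$-almost every $y$, and the stated identity follows for all measurable $E$ and $h$.

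The step requiring the most care is the decomposition $\Omega=Z\sqcup\bigsqcup_j F_j$: because we only assume $p\geq 1$, the map $f$ need not even be continuous, so one genuinely needs approximate differentiability a.e.\ together with the Whitney/Lusin $C^1$ approximation in order to land on sets where the elementary area formula applies. Correspondingly, the sole point at which the extra hypothesis on $f$ is used is in showing that the leftover null set $Z$ does not contribute to the right-hand side, which is precisely the content of Lusin's condition (N).
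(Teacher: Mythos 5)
Your proof is correct. The paper does not actually prove this statement---it quotes it as a known consequence of Lusin's condition (N) for Sobolev maps, citing \cite{Hajlasz2000} and \cite{Evans1992}---and your argument (approximate differentiability a.e., the Whitney--Lusin $C^1$ decomposition $\Omega=Z\sqcup\bigsqcup_j F_j$, the classical area formula on each $F_j$, monotone convergence, and condition (N) to kill the contribution of $f(Z)$) is precisely the standard proof found in those references, with the role of condition (N) correctly isolated.
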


\section{Proof of the first half of Theorems \ref{thmmain}, \ref{thmmetric} and Theorem \ref{thmquasisym}}\label{section3}
We start by constructing a quasiconformal curve whose image is not upper Ahlfors regular with respect to the Hausdorff measure.

\begin{theorem}\label{thmexample}
	There exists a quasiconformal $\omega$ curve $f:\R^2\to\R^3$, where $\omega=dx_1\wedge dx_2$, such that $f(\R^2)$ is not upper Ahlfors regular for the Hausdorff 2-measure. Moreover $f(\R^2)$ is not  linearly locally connected.
\end{theorem}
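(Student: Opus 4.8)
My plan is to produce one explicit example: the smooth map $f\colon\R^2\to\R^3$ given by
\[
f(x_1,x_2)=\bigl(e^{x_1}\cos x_2,\ e^{x_1}\sin x_2,\ e^{x_1}\arctan x_2\bigr),
\]
whose image is an infinitely‑wound ``spiral ramp'' clustering at the origin as $x_1\to-\infty$. I would then verify, in order: (i) $f$ is a topological embedding; (ii) $f$ is a quasiconformal $\omega$-curve with $\omega=dx_1\wedge dx_2$; (iii) $f(\R^2)$ is not upper Ahlfors $2$-regular; (iv) $f(\R^2)$ is not linearly locally connected.

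For (i) and (ii): $f$ is smooth, hence lies in $W^{1,2}_{loc}(\R^2,\R^3)$. Injectivity is immediate once one notes that the norm of the first two coordinates of $f(x_1,x_2)$ is exactly $e^{x_1}$: thus $f(x)=f(x')$ forces $x_1=x_1'$, then $x_2'\in x_2+2\pi\Z$ from the first two coordinates, and finally $\arctan x_2=\arctan x_2'$ forces $x_2=x_2'$ by strict monotonicity of $\arctan$; the same two observations, together with continuity of $\tan$ on $(-\pi/2,\pi/2)$, show $f^{-1}$ is continuous. Since $\omega=dx_1\wedge dx_2$ has constant coefficients and unit comass, the curve condition reads $|Df|^2\le K\,\star f^*\omega$; here $f^*\omega=df_1\wedge df_2$ gives $\star f^*\omega=\det D(f_1,f_2)=e^{2x_1}>0$ everywhere, while a direct computation gives $Df^{T}Df=e^{2x_1}M(x_2)$ with $\operatorname{tr}M(x_2)=2+\arctan^2x_2+(1+x_2^2)^{-2}\le 3+\pi^2/4$, so $|Df|^2=e^{2x_1}\lambda_{\max}(M(x_2))\le(3+\pi^2/4)\,\star f^*\omega$. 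Hence $f$ is a $(3+\pi^2/4)$-quasiconformal $\omega$-curve.

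For (iii) I would use the area formula (Theorem \ref{lemmaarea}, valid here since $f$ is smooth and injective, hence satisfies Lusin's condition (N)) together with the pointwise bound $J_f=\sqrt{\det Df^{T}Df}\ge|\det D(f_1,f_2)|=e^{2x_1}$, which holds because $Df^{T}Df$ differs from $D(f_1,f_2)^{T}D(f_1,f_2)$ by the rank-one term $(\nabla f_3)(\nabla f_3)^{T}$. Since $|f(x_1,x_2)|\le e^{x_1}\sqrt{1+\pi^2/4}$, the whole tail $f(\{x_1\le -N\}\times\R)$ lies in a ball about $f(-N,0)$ of radius $r_N\asymp e^{-N}$, yet its Hausdorff $2$-measure is at least $\int_{-\infty}^{-N}\!\int_{-\infty}^{\infty}e^{2x_1}\,dx_2\,dx_1=+\infty$. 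Thus $\mathcal H^2\bigl(B(f(-N,0),r_N)\bigr)=+\infty$ for every $N$, so $f(\R^2)$ is not upper Ahlfors $2$-regular; one checks separately that $\mathcal H^2$ is still locally finite at every point of $f(\R^2)$, so we do have a genuine metric measure space.

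For (iv) I would take $p_K=f(0,2\pi K)=(1,0,\arctan 2\pi K)$ and $q_K=f(0,2\pi(K+1))=(1,0,\arctan 2\pi(K+1))$, so $\delta_K:=|p_K-q_K|=\arctan 2\pi(K+1)-\arctan 2\pi K\to0$. If $f(\R^2)$ were linearly locally connected with constant $c$, applying the first connectivity condition to the pair $p_K,q_K\in B(p_K,2\delta_K)$ would yield a continuum $C_K\subset B(p_K,2c\delta_K)$ joining them, hence $\diam C_K\le 4c\delta_K\to0$. But $f^{-1}(C_K)$ is a continuum in $\R^2$ joining $(0,2\pi K)$ and $(0,2\pi(K+1))$, which lie in distinct connected components of $\R^2\setminus\{x_2\in\pi+2\pi\Z\}$; therefore $f^{-1}(C_K)$ contains some $(a,b)$ with $b\in\pi+2\pi\Z$, and then $f(a,b)=(-e^{a},0,e^{a}\arctan b)\in C_K$ has negative first coordinate, forcing $\diam C_K\ge|f(a,b)-p_K|\ge1$ — a contradiction once $K$ is large. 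So $f(\R^2)$ is not linearly locally connected. In this plan the trace/eigenvalue estimate and the elementary distance bounds are routine; the one step that genuinely needs care is the continuity of $f^{-1}$ near the accumulation set, i.e.\ showing every point of $f(\R^2)$ has an $\R^3$-neighbourhood meeting only one ``sheet'' of the spiral, which is exactly what makes $f$ an embedding despite the sheets clustering along the $x_3$-axis.
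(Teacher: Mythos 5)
Your proposal is correct, and it follows the same blueprint as the paper's construction: an exponential-spiral embedding $f(x_1,x_2)=(e^{x_1}\cos x_2,\,e^{x_1}\sin x_2,\,e^{x_1}\cdot(\text{bounded monotone profile in }x_2))$, with the quasiconformality check reducing to $\star f^*\omega=e^{2x_1}$ dominating $|Df|^2$ up to a constant. The paper takes the profile $\phi(x_2)$ positive, increasing and tending to $0$ as $x_2\to-\infty$, and defeats upper Ahlfors regularity by fixing one ball $B(f(1,-2\pi),2)$ of unit scale and showing it captures infinitely many full sheets $f(U_k)$ (one per strip $x_2\in(-2(k+1)\pi,-2k\pi)$), each of which projects $1$-Lipschitzly onto a fixed disk and hence contributes a definite amount of $\mathcal H^2$. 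You instead use $\arctan x_2$ and exploit the radial collapse: the entire tail $\{x_1\le -N\}\times\R$ lands in a ball of radius $\asymp e^{-N}$ about $f(-N,0)$ while the Jacobian lower bound $J_f\ge|\det D(f_1,f_2)|=e^{2x_1}$ (Cauchy--Binet, or your rank-one perturbation argument) makes its area infinite because the $x_2$-integral diverges. Both mechanisms are valid; yours is a little leaner in that it needs no auxiliary function with a prescribed limit at $-\infty$ and produces a single ball of \emph{infinite} measure directly, while the paper's version localizes the failure at unit scale near a genuine accumulation of sheets. Your non-LLC argument is essentially the paper's (consecutive windings $f(0,2\pi K)$, $f(0,2\pi(K+1))$ converging in $\R^3$ while any connecting continuum must cross a sheet with $\cos x_2=-1$ and hence have diameter $\ge 1$), but you make the "must exit the ball" step rigorous via the separating lines $x_2\in\pi+2\pi\Z$ and continuity of $f^{-1}$, where the paper only asserts it. All the supporting computations (injectivity via $|(f_1,f_2)|=e^{x_1}$ plus strict monotonicity of $\arctan$, continuity of $f^{-1}$, the trace bound $\lambda_{\max}(M)\le\operatorname{tr}M\le 3+\pi^2/4$, and the use of the area formula for a smooth injective map) check out.
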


Since local linear connectivity is preserved under locally quasisymmetric maps and since $\R^2$ is locally linearly connected, the above example shows that quasiconformal curves are not necessarily locally quasisymmetric as stated in Theorem \ref{thmquasisym}. 
\begin{proof}[Proof of Theorem \ref{thmexample}]
	We define the function 
	\begin{equation}
		f(x_1,x_2)=(e^{x_1}\cos x_2,e^{x_1}\sin x_2,\phi(x_2)e^{x_1})		
	\end{equation} 
	where $\phi:\R\to (0,\infty)$ is a smooth, bounded and increasing function with bounded derivative and $\lim_{x\to-\infty}\phi(x)= 0$.
	An easy calculation shows that 
	\begin{equation}
		\star f^*\omega= e^{2x_1}\ \text{and}\ |Df(x_1,x_2)|^2\leq 2e^{2x_1}+e^{2x_1}(\phi(x_2)+\phi'(x_2))^2\leq C e^{2x_1},
	\end{equation}
	where $C\geq1$ is some constant. It is also easy to see that $||\omega||=1$. Moreover, $f:\R^2\to f(\R^2)$, is easily seen to be continuous and injective with a continuous inverse. Thus $f$ satisfies equation \eqref{eq03}. Since $f\in W^{1,2}_{loc}(\R^2,\R^3)$ we have that $f$ is a quasiconformal $\omega$-curve. 
	
	To show that $f(\R^2)$ is not upper Ahlfors regular, choose the point $f(1,-2\pi)=(e,0,\phi(-2\pi)e)$. Assume now, without loss of generality, that $\phi(-2\pi)e<1/10$ and choose a ball in $\R^3$, around $f(1,-2\pi)$ of radius $2$, $B(f(1,-2\pi),2)$. Let \[A=B(f(1,-2\pi),2)\cap f(\Omega).\] $B(f(1,-2\pi),2)$ intersects the plane $\{(x_1,x_2,0):x_1,x_2\in \R\}$ in the set $$D=\{(x_1,x_2,x_3):(x_1-e)^2+x_2^2<r_0,x_3=0\},$$ where $r_0=4-(\phi(-2\pi)e)^2$. We will show that $\mathcal{H}^2(A)=\infty$ so $f(\Omega)$ is not upper Ahlfors regular. Indeed let \[C=\{(x_1,x_2,x_3):(x_1-e)^2+x_2^2<r_0,x_3\in(0,\phi(-2\pi)(e+\sqrt{r_0}))\}.\] Then notice that $C\subset B(f(1,-2\pi),2)$. Also notice that in each strip $$\{(x,y):y\in(-2k\pi,-2(k+1)\pi)\},\  k=1,2,\dots$$ there exists a domain $U_k$ such that $E(U_k)=V$, where $E(z)=e^z$ is the exponential map in the plane and $V=\{(x,y):(x-e)^2+y^2<r_0\}.$ It is easy to see now that $f(U_k)$ is a connected component of the set $C\cap f(\Omega)$. Thus we have \begin{equation}
		\mathcal{H}^2(A)=\int_{B(f(1,-2\pi),2)\cap f(\Omega)}d\mathcal{H}^2\geq \int_{C\cap f(\Omega)}d\mathcal{H}^2=\sum_k\int_{f(U_k)}d\mathcal{H}^2,
	\end{equation}
	 Let $p:\R^3\to \R^3$ be the projection to the $x_1x_2$ plane, i.e. $p(x_1,x_2,x_3)=(x_1,x_2,0)$ and notice that $p(f(U_k))=D$. Since $p$ is a $1$-Lipschitz map we have that $\mathcal{H}^2(f(U_k))\geq \mathcal{H}^2(D)$. Thus \[\mathcal{H}^2(A)\geq \sum_k \mathcal{H}^2(D)\] which implies that $\mathcal{H}^2(A)=\infty$ as we wanted. To show now that $f(\R^2)$ is not locally linearly connected, let $x_n=(10, -2\pi n)$, $n=1,2,\dots$ and consider the balls $$B_n:=B(f(x_n),\phi(-2\pi n)e^{10})$$ in $\R^3$. It is easy to see that  $f(x_{n+1})\in B(f(x_n),\phi(-2\pi n)e^{10}) $ and that for large $n$ any continuum containing $f(x_n)$ and $f(x_{n+1})$ must have points outside of the same ball. In fact it will contain points with  negative $x_1$ coordinate. Since the radius of the balls $B_n$ decreases to zero there is no constant $c\geq 1$ such that we can connect $f(x_n)$ and $f(x_{n+1})$ with a continuum inside $B(f(x_n),c\phi(-2\pi n)e^{10})$ for all $n$ which proves what we wanted.
\end{proof}

Next we restate and prove the first half of Theorem \ref{thmmain}.
\begin{theorem}\label{thm2}
	Let $f:\Omega\to \R^m$ be a quasiconformal $\omega$-curve for a constant coefficient $n$-volume form $\omega$ in $\R^m$. Suppose that  $f(\Omega)\subset\R^m$ is equipped with the induced euclidean metric. Then $f$ is in $N_{loc}^{1,n}(\Omega,\R^m)$ and satisfies the inequality 
	\begin{equation}
		\lip f(x)^n\leq K \mu_f,
	\end{equation} for some constant $K\geq 1$. Moreover, for any path family $\Gamma$ in $\Omega$ we have that  
	\begin{equation}
		\modu\Gamma \leq K \modu f(\Gamma).
	\end{equation}
\end{theorem}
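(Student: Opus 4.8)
The plan is to reduce the statement to three facts already recorded in the preliminaries: the higher integrability of quasiconformal curves (Theorem~\ref{higherint}), their a.e.\ differentiability together with the area formula (Corollary~\ref{lemmadif}, Theorem~\ref{lemmaarea}), and Williams' theorem (Theorem~\ref{williams}). First I would use Theorem~\ref{higherint} to obtain $f\in W^{1,p}_{loc}(\Omega,\R^m)$ for some $p>n$; in particular $f\in W^{1,n}_{loc}$, so by Lemma~\ref{minimalgradient} $f\in N^{1,n}_{loc}(\Omega,\R^m)$ — equivalently into $f(\Omega)$ with the restricted euclidean metric, since the defining line–integral inequality sees only euclidean distances — and its minimal $n$-weak upper gradient is $\rho_f=|Df|$.

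Next I would identify the three quantities appearing in the analytic inequality with their pointwise counterparts. By Corollary~\ref{lemmadif}, $f$ is differentiable at a.e.\ $x$; at such an $x$ the difference quotients of $f$ converge to $Df(x)$, hence $\lip f(x)=\sup_{|v|=1}|Df(x)v|=|Df(x)|$. For the volume derivative, since $f$ is injective one has $N(f,y,B(x,r))=\mathbf 1_{f(B(x,r))}(y)$, so applying the area formula (Theorem~\ref{lemmaarea}, available because quasiconformal curves satisfy Lusin's (N) condition) with $h=\mathbf 1_{B(x,r)}$ gives $\mathcal H^n(f(B(x,r)))=\int_{B(x,r)}J_f$, where $J_f=\sqrt{\det(Df^TDf)}$. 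Since $J_f\le|Df|^n\in L^{p/n}_{loc}\subset L^1_{loc}$, Lebesgue's differentiation theorem yields $\mu_f(x)=J_f(x)$ at a.e.\ $x$. The same application of the area formula with $K$ compact shows $\mathcal H^n(f(K))<\infty$, so that $f(\Omega)$ is a separable, locally finite measure metric space — precisely the standing hypothesis needed to invoke Williams' theorem at the end.

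It then remains to convert the distortion inequality \eqref{eq03} into $|Df|^n\le K'J_f$. Writing $\omega=\sum_I\phi_I\,dx_I$ with constant coefficients $\phi_I$, the pullback is $f^*\omega=\bigl(\sum_I\phi_I\det Df_I\bigr)\,dx_1\wedge\dots\wedge dx_n$, where $Df_I$ is the $n\times n$ submatrix of $Df$ with rows indexed by $I$; by the Cauchy–Binet formula $J_f=\bigl(\sum_I(\det Df_I)^2\bigr)^{1/2}$, so $|\det Df_I|\le J_f$ for every $I$, whence $|\star f^*\omega|\le\bigl(\sum_I|\phi_I|\bigr)J_f$. As the left-hand side of \eqref{eq03} is nonnegative we also get $\star f^*\omega\ge 0$ a.e.; and $\|\omega\|$ is a positive constant, so \eqref{eq03} gives $|Df(x)|^n\le K'J_f(x)$ a.e.\ with $K'=K\bigl(\sum_I|\phi_I|\bigr)/\|\omega\|$. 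Combining this with the two identifications of the previous paragraph yields $\lip f(x)^n=|Df(x)|^n\le K'J_f(x)=K'\mu_f(x)$ for a.e.\ $x\in\Omega$, which is the analytic inequality. Finally, since $\rho_f=|Df|$ and $\rho_f^n\le K'\mu_f$ a.e., and $f:\Omega\to f(\Omega)$ is a homeomorphism of separable, locally finite measure metric spaces, Theorem~\ref{williams} gives at once $\modu\Gamma\le K'\modu f(\Gamma)$ for every path family $\Gamma$ in $\Omega$; renaming $K'$ as $K$ finishes the proof.

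The computations above are all routine; the points that require genuine care are the two that feed the hypotheses of Williams' theorem — namely checking that $\mathcal H^n\!\restriction f(\Omega)$ is locally finite (this is exactly where Lusin's (N) condition and the area formula enter) and establishing the clean identity $\mu_f=J_f$ a.e.\ (which needs injectivity, the area formula, and Lebesgue differentiation simultaneously). I expect the latter to be the main obstacle to writing the argument cleanly, since it forces one to work at common good points of differentiability of $f$ and Lebesgue points of $J_f$.
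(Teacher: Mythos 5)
Your proposal is correct and follows essentially the same route as the paper's proof: identify $\lip f=|Df|$ via a.e.\ differentiability, identify $\mu_f=J_f$ via the area formula (which also gives the local finiteness of $\mathcal H^n$ on $f(\Omega)$ needed for Williams' theorem), bound $\star f^*\omega$ by a constant times $J_f$ using Cauchy--Binet, and conclude the modulus inequality from Theorem~\ref{williams}. The only cosmetic difference is that you invoke the higher integrability $p>n$, which is not needed here since the definition of a quasiregular curve already places $f$ in $W^{1,n}_{loc}$.
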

Before we proceed with the proof let us introduce some terminology that we will need. An \emph{$n$-multi-index} is an $n$-tuple, $I=(i_1,\dots,i_n)$ of indices, $1\leq i_1\leq\dots\leq i_n\leq m$. Let $\pi_I:\R^m\to\R^n$ denote the function $\pi_I(x_1,\dots,x_m)=(x_{i_1},\dots,x_{i_n})$ and let $(f_{i_1},\dots,f_{i_n})=f_I=\pi_I\circ f$. We will denote by $J_I$ the Jacobian of the map $f_I:\R^n\to \R^n$.

\begin{proof}
	Since $f\in W^{1,n}_{loc}(\Omega,\R^m)$ we have, by Lemma \ref{minimalgradient}, that $|Df(x)|$ is a minimal $n$-weak upper gradient for $f$  and that $f\in N^{1,n}_{loc}(\Omega,\R^m)$. By Lemma \ref{lemmadif} we have that $\lip f(x)= |Df(x)|$ almost everywhere and by Lemma \ref{lemmaarea} we have that $\mu_f(x)=\sqrt{\det(Df^T(x)Df(x))}$. Since $J_I\leq\sqrt{\sum_IJ_I^2}$, where $J_I$ denotes the $I=(i_1,\dots,i_n)$ minor of $Df$, there always exist a constant $C>0$ such that \[\star f^*\omega \leq C\sqrt{\sum_IJ_I^2},\] where the sum is over all multi-indices $I$. The Cauchy-Binet formula now implies that \[ \sqrt{\det(Df^T(x)Df(x))}=\sqrt{\sum_IJ_I^2},\] where again the sum is over all multi-indices. Thus \[\lip f(x)^n=|Df(x)|^n\leq K\star f^*\omega \leq KC\sqrt{\sum_IJ_I^2}=KC\mu_f, \] as we wanted.  
	For the lower half of the modulus inequality now all we have to show to use \cite[Theorem 1.1]{Williams2012} is that $f(\Omega)$ is a locally finite metric measure space. But this immediately follows from the area formula, Lemma \ref{lemmaarea}, and the fact that $f\in W^{1,n}_{loc}(\Omega,\R^m)$.
\end{proof}
Next we are going to prove the first half of Theorem \ref{thmmetric}.
\begin{theorem}\label{thm1}
	Let $f:\Omega\to \R^m$ be a quasiconformal $\omega$-curve for a constant coefficient $n$-volume form $\omega$ in $\R^m$.  Suppose that $\max_I N(f_I)<\infty$, where the maximum is taken over all multi indices. Then $f:\Omega\to f(\Omega)$ is quasiconformal according to the metric definition.
\end{theorem}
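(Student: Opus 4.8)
The plan is to split the argument, as indicated in the introduction: first deduce from the projection finiteness of $f$ that $f(\Omega)$ is upper Ahlfors $n$-regular in the euclidean metric, and then adapt the classical proof that a geometrically quasiconformal homeomorphism has locally bounded linear dilatation.

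\textbf{Step 1: upper Ahlfors regularity.}
Fix $z=f(x_0)\in f(\Omega)$ and $\rho>0$, and put $E=f^{-1}\big(\overline B(z,\rho)\cap f(\Omega)\big)$. For every $n$-multi-index $I$ the coordinate projection $\pi_I\colon\R^m\to\R^n$ is $1$-Lipschitz, so $f_I(E)=\pi_I\big(\overline B(z,\rho)\cap f(\Omega)\big)$ is contained in a euclidean $n$-ball of radius $\rho$, whence $|f_I(E)|\le\omega_n\rho^n$. Since $f$ satisfies Lusin's (N) condition and $\pi_I$ is Lipschitz, so does $f_I=\pi_I\circ f\in W^{1,n}_{loc}$, and the area formula of Theorem \ref{lemmaarea} (applied with $h=\mathbf 1_E$) gives $\int_E|J_I|\,dx\le N(f_I)\,|f_I(E)|\le N(f_I)\,\omega_n\rho^n$, where $J_I$ is the $I$-minor of $Df$. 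Summing over $I$, using $\mu_f=\sqrt{\det(Df^TDf)}=\sqrt{\sum_IJ_I^2}\le\sum_I|J_I|$ (Cauchy--Binet) together with the area formula for the injective map $f$ itself, we obtain
\begin{equation}
\mathcal H^n\big(\overline B(z,\rho)\cap f(\Omega)\big)=\int_E\mu_f\,dx\le\Big(\max_I N(f_I)\Big)\binom{m}{n}\omega_n\,\rho^n ,
\end{equation}
which is the asserted upper regularity, with a constant depending only on $n,m$ and $\max_I N(f_I)$.

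\textbf{Step 2: bounding the linear dilatation.}
Fix $x_0\in\Omega$ and a small $r>0$, and write $L=L_f(x_0,r)$, $\ell=\ell_f(x_0,r)$; note $\ell>0$ because $f$ is an embedding. Straight from the definitions one has, with no use of topology,
\begin{equation}
B\big(f(x_0),\ell\big)\cap f(\Omega)\ \subseteq\ f\big(B(x_0,r)\big)\ \subseteq\ \overline B\big(f(x_0),L\big)\cap f(\Omega).
\end{equation}
By Theorem \ref{thm2} the lower modulus inequality $\modu\Gamma\le K\modu f(\Gamma)$ holds for every path family $\Gamma$ in $\Omega$. Choose $a\in\overline B(x_0,r)$ with $|f(a)-f(x_0)|>\tfrac34L$; since $\{z:|z-f(x_0)|>L/2\}$ is open we have $f\big(B(a,\tau)\big)\subset\{z:|z-f(x_0)|>L/2\}$ for some $\tau>0$, and likewise $f\big(B(x_0,s)\big)\subset B(f(x_0),\ell)$ for some $s>0$. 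Let $\Gamma$ be the family of paths in $\Omega$ joining $B(x_0,s)$ to $B(a,\tau)$. Every path of $f(\Gamma)$ runs from $B(f(x_0),\ell)$ into $\{z:|z-f(x_0)|>L/2\}$, hence crosses the euclidean spherical shell $S=\{\ell\le|z-f(x_0)|\le L/2\}\cap f(\Omega)$; testing with $\rho(z)=\big(|z-f(x_0)|\log(L/\ell)\big)^{-1}\mathbf 1_S(z)$ and using the dyadic bound
\begin{equation}
\int_{\{b_1\le|z-f(x_0)|\le b_2\}\cap f(\Omega)}\frac{d\mathcal H^n(z)}{|z-f(x_0)|^n}\ \le\ C\big(1+\log(b_2/b_1)\big),
\end{equation}
which is an immediate consequence of Step 1, we obtain $\modu f(\Gamma)\le C'\,(\log(L/\ell))^{1-n}$. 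Combining this with the lower modulus inequality and with a uniform lower bound $\modu\Gamma\ge c>0$ for the domain family gives $\log(L/\ell)\le C''$, that is $L_f(x_0,r)\le H\,\ell_f(x_0,r)$ with $H=H(n,m,K,C,\max_I N(f_I))$; letting $r\to0$ yields $H_f(x_0)\le H$, which is the metric definition.

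\textbf{The main obstacle.}
The difficulty lies entirely in the last comparison, and above all in the requirement that it hold at \emph{every} point $x_0$, not merely almost everywhere. At a point of differentiability of $f$ (hence a.e., by Corollary \ref{lemmadif}) the quasiconformal curve inequality and the area formula give $|Df(x_0)|^n=\sigma_1^n\le K(\star f^*\omega)(x_0)\le KC\mu_f(x_0)=KC\prod_i\sigma_i$ for the singular values $\sigma_1\ge\dots\ge\sigma_n$ of $Df(x_0)$, whence $\sigma_1\le KC\sigma_n$ and $H_f(x_0)\le KC$ outright; but this elementary observation is unavailable at the exceptional points, and since $f(\Omega)$ is in general neither lower Ahlfors regular nor linearly locally connected (Theorem \ref{thmexample}), one can neither compare Hausdorff measures of metric balls nor invoke Loewner-type modulus estimates in the target. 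Consequently the test functions must be built from the euclidean distance to $f(x_0)$ and exploit only the upper regularity, and the path families in $\Omega$ must be chosen so as to keep $\modu\Gamma$ bounded below while forcing $\modu f(\Gamma)$ small — this is the point of the ``fattening'' of $a$ above, and it essentially constrains one to work at the scale $|a-x_0|$, where $L_f(x_0,\cdot)$ has already saturated, and to absorb, by a chaining over dyadic radii, the scale gap between $L$ and $\ell$ that the modulus comparison produces. This bookkeeping, rather than any single inequality, is the crux of the adaptation.
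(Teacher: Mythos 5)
Your Step 1 is correct and is essentially the paper's Lemma \ref{lemma1}: projection finiteness plus the area formula for the coordinate projections $f_I$ yields upper Ahlfors $n$-regularity of $f(\Omega)$ (the paper routes the estimate through Hadamard's inequality and the distortion inequality $|Df|^n\le K\star f^*\omega$ rather than through $\mu_f\le\sum_I|J_I|$, but both are fine). Your image-side modulus estimate in Step 2 is also sound: any path of $f(\Gamma)$ running from $B(f(x_0),\ell)$ to the complement of $B(f(x_0),L/2)$ picks up the $\big(\log(L/\ell)\big)^{1-n}$ bound from the dyadic annulus estimate, which only uses upper regularity (up to the harmless normalization $\log(L/(2\ell))$ needed for admissibility).

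The genuine gap is the domain-side bound. You take $\Gamma=\Delta\big(B(x_0,s),B(a,\tau)\big)$, where $s$ and $\tau$ are produced solely by continuity of $f$ at $x_0$ and $a$, and then assert a uniform lower bound $\modu\Gamma\ge c>0$. This is false for that family: since $s,\tau$ may be arbitrarily small compared with $|x_0-a|\le 2r$, one has
\begin{equation}
\modu\Delta\big(\overline B(x_0,s),\overline B(a,\tau)\big)\le\modu\Delta\big(\overline B(x_0,s),\R^n\setminus B(x_0,|x_0-a|-\tau)\big)=\omega_{n-1}\Big(\log\tfrac{|x_0-a|-\tau}{s}\Big)^{1-n}\longrightarrow 0
\end{equation}
as $s\to0$, so no constant $c$ independent of $s,\tau$ exists and the inequality $\log(L/\ell)\le C''$ does not follow. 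Your closing discussion correctly identifies this tension as the crux, but the ``fattening'' of $x_0$ and $a$ into tiny balls does not resolve it. The paper's proof avoids the problem by choosing continua adapted to $f$ rather than round balls: it takes $E$ to be the $x_0$-component of $f^{-1}(B(f(x_0),\ell))$ (which is an open connected neighbourhood of $x_0$ contained in $B(x_0,r)$) and $A$ the component of $f^{-1}(B(f(x_0),L))$ containing $B(x_0,r)$, forms the ring $R(\overline E,A^c)$, and invokes V\"ais\"al\"a's capacity estimate \cite[Theorem 11.7]{Vaeisaelae1971} to get $\modu\Gamma(\ell,L)\ge a_n$; at the same time $\partial E$ and $\partial A$ map into the spheres $S(f(x_0),\ell)$ and $S(f(x_0),L)$, so the image family still crosses the shell. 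Replacing your two small balls by such preimage components (or by other continua of diameter comparable to $r$) is not a cosmetic adjustment — it is the missing idea — so as written the proof of the metric bound is incomplete.
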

To prove Theorem \ref{thm1} we will first need to show that under the assumptions of the theorem we have that $f(\Omega)$ is upper Ahlfors $n$-regular. The example we constructed in Theorem \ref{thmexample} shows that this is not true without  the assumption that $\max_I N(f_I)<\infty$.
\begin{lemma}\label{lemma1}
	Let $f:\Omega\to \R^m$ be a quasiconformal $\omega$-curve for a constant coefficient $n$-volume form $\omega$ in $\R^m$.  Suppose that $\max_I N(f_I)<\infty$, where the maximum is taken over all multi-indices. Then there exists $C=C(\omega,K)>0$ such that \[\mathcal{H}^n(B(x,r)\cap f(\Omega))\leq C\max_I N(f_I)r^n,\] for all $x\in f(\Omega)$ and all $r>0$.
\end{lemma}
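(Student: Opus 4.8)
The plan is to apply the area formula (Theorem \ref{lemmaarea}) twice: once to $f$ itself, to rewrite the Hausdorff measure of a ball section of $f(\Omega)$ as an integral of the generalized Jacobian; and then once to each coordinate projection $f_I=\pi_I\circ f$, to convert the individual minors $J_I$ into multiplicity integrals controlled by $N(f_I)$. Concretely, fix $x\in f(\Omega)$ and $r>0$ and set $E=f^{-1}(B(x,r))$, an open subset of $\Omega$, so that $B(x,r)\cap f(\Omega)=f(E)$. Since $f$ is a quasiconformal $\omega$-curve it lies in $W^{1,p}_{loc}(\Omega,\R^m)$ for some $p>n$ (Theorem \ref{higherint}) and satisfies Lusin's (N) condition, so Theorem \ref{lemmaarea} applies; as $f$ is an embedding the multiplicity $\#(f^{-1}(y)\cap E)$ equals $1$ on $f(E)$, and the Cauchy--Binet formula gives
\[
\mathcal H^n\big(B(x,r)\cap f(\Omega)\big)=\int_E\sqrt{\det\big(Df^T(x)Df(x)\big)}\,dx=\int_E\Big(\sum_I J_I^2\Big)^{1/2}dx\le\sum_I\int_E|J_I|\,dx,
\]
the sums being over all $\binom{m}{n}$ multi-indices $I$.

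Next I would fix a multi-index $I$ and apply the area formula to $f_I\colon\Omega\to\R^n$. Since $\pi_I$ is linear we have $f_I\in W^{1,p}_{loc}(\Omega,\R^n)$, and since $\pi_I$ is $1$-Lipschitz it maps $\mathcal H^n$-null sets to $\mathcal H^n$-null sets, so $f_I$ again satisfies Lusin's (N); hence Theorem \ref{lemmaarea} with $h\equiv 1$ on $E$ is legitimate, and because $f_I$ maps $\R^n$ to $\R^n$ we have $\sqrt{\det(Df_I^TDf_I)}=|J_I|$, so
\[
\int_E|J_I|\,dx=\int_{f_I(E)}\#\big(f_I^{-1}(z)\cap E\big)\,d\mathcal H^n(z)\le N(f_I)\,\mathcal H^n\big(f_I(E)\big)\le\big(\max_I N(f_I)\big)\,\mathcal H^n\big(f_I(E)\big).
\]
Finally, $f_I(E)=\pi_I\big(f(E)\big)=\pi_I\big(B(x,r)\cap f(\Omega)\big)\subseteq\pi_I(B(x,r))$, and as $\pi_I$ is $1$-Lipschitz this is contained in the Euclidean ball $B(\pi_I(x),r)\subset\R^n$, so $\mathcal H^n(f_I(E))\le\omega_n r^n$. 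Summing over $I$ yields
\[
\mathcal H^n\big(B(x,r)\cap f(\Omega)\big)\le\binom{m}{n}\,\omega_n\,\big(\max_I N(f_I)\big)\,r^n,
\]
which is the claim. (If one wants the constant exhibited through $\omega$ and $K$ rather than merely through the dimensions, one can instead bound $\mu_f=(\sum_I J_I^2)^{1/2}$ from above using the quasiconformal inequality $(\|\omega\|\circ f)|Df|^n\le K\,\star f^*\omega$ together with the expansion $\star f^*\omega=\sum_I\phi_I J_I$ of the constant-coefficient form before carrying out the second step.)

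There is no deep obstacle here; the single point requiring care is the verification that each projected map $f_I$ inherits both the Sobolev regularity and the Lusin (N) condition, so that the area formula is genuinely applicable to it — and both are immediate from $\pi_I$ being linear and $1$-Lipschitz. The conceptual content is exactly the double use of the area formula: first to leave the target $\R^m$ via $f$, then to re-enter $\R^n$ via each $f_I$, where the finiteness hypothesis $\max_I N(f_I)<\infty$ can be used, combined with the elementary fact that coordinate projections do not expand balls.
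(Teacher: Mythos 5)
Your proof is correct, and its skeleton is the same as the paper's: apply the area formula once to $f$ (with multiplicity $1$ since $f$ is an embedding), reduce to the minors $J_I$, apply the area formula again to each projection $f_I$, and finish with the multiplicity bound $N(f_I)$ together with the fact that $\pi_I$ does not expand balls. The one genuine difference is the middle step. The paper passes from $\sqrt{\det(Df^TDf)}$ to the minors via Hadamard's inequality $\sqrt{\det(Df^TDf)}\le|Df|^n$ followed by the distortion inequality $|Df|^n\le K\star f^*\omega=K\sum_I c_I J_I$, which is why its constant is $C(\omega,K)$; you instead use Cauchy--Binet and the elementary bound $\bigl(\sum_I J_I^2\bigr)^{1/2}\le\sum_I|J_I|$, which bypasses the distortion inequality entirely and yields the purely dimensional constant $\binom{m}{n}\omega_n$. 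Your route is slightly cleaner (it works with $|J_I|$ throughout, exactly as the area formula demands, whereas the paper's chain implicitly uses $\int_U J_I\le\int_U|J_I|$ and tacitly assumes the signs of the coefficients $c_I$ cooperate) and it shows that upper Ahlfors regularity of $f(\Omega)$ needs only the Sobolev regularity, Lusin's condition (N), injectivity and projection-finiteness --- the quasiconformal inequality enters only through Theorem \ref{higherint} to secure (N). The paper's route, by contrast, exhibits how the constant degrades with $K$ and $\omega$, which is the dependence advertised in the statement of the lemma.
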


\begin{proof}
	Suppose that $f(U)=B(x,r)\cap f(\Omega)$, where $U$ is an open subset of $\R^n$.	From the area formula, Lemma \ref{lemmaarea},  and Hadamard's inequality we have that \begin{equation}
		\begin{aligned}
			\mathcal{H}^n(f(U))=\int_U \sqrt{\det (Df^T(x)Df(x))}dx\leq \int_U |Df(x)|^ndx.
		\end{aligned}
	\end{equation}
Using now the fact that $f$ is a quasiconformal curve we have that \[\int_U |Df(x)|^ndx\leq K\int_U f^*\omega=K\sum_Ic_I\int_UJ_Idx_I,\] where we have assumed that $\omega=\sum_I c_Idx_I$ and $I$ ranges over all multi-indices.
Thus if we denote by $p_I$ the projection to the coordinates defined by the multi index $I$ and since the functions $f_I$ are in $W^{1,p}_{loc}(\Omega,\R^n)$ using Theorem \ref{lemmaarea} we have that 
\begin{equation}
	\begin{aligned}
		\mathcal{H}^n(f(U))\leq& K\sum_Ic_I\int_UJ_Idx_I\\=&K\sum_Ic_I \int_{f_I(U)} N(f_I,y,U)d\mathcal{H}^n(y)\\\leq& K\max_I N(f_I)\sum_Ic_I\mathcal{H}^n(p_I(B(x,r)\cap f(\Omega)))\\\leq& K\max_I N(f_I)\sum_I c_Icr^n=Cr^n,
	\end{aligned}
\end{equation}
where $c>0$ some constant, as we wanted.
	\end{proof}

Having lemma \ref{lemma1} at our disposal the proof of Theorem \ref{thm1} follows by standard arguments.

\begin{proof}[Proof of Theorem \ref{thm1}]
	Let $x\in f(\Omega)$ and $B_e(x,r):=B(x,r)\cap f(\Omega)$ be a ball around $x$. Let $U(f^{-1}(x),r)=f^{-1}(B_e(x,r))$. For any $0<s<t$ now take $\Gamma(s,t)$ to be the path family in $\Omega$ connecting $\partial U(f^{-1}(x),t)$ and $\partial U(f^{-1}(x),s)$. Let now $L=L(x,r)=\sup_{|x-y|\leq r}|f(x)-f(y)|$ and $\ell=\ell(x,r)=\inf_{|x-y|\geq r}|f(x)-f(y)|$ for any $r>0$. Then by Theorem \ref{thm2} we have that \[\modu(\Gamma(\ell,L))\leq c\modu (f(\Gamma(\ell,L))).\] Moreover, $f(\Gamma(\ell,L))\subset \Delta(S^{m-1}(x,\ell),S^{m-1}(x,L))$, where $\Delta(S^{m-1}(x,\ell),S^{m-1}(x,L))$ is the path family  in $f(\Omega)$ connecting the  euclidean $(m-1)$-spheres $S^{m-1}(x,\ell)$ and $S^{m-1}(x,L)$. From  \cite[Lemma 7.18]{Heinonen2001a} and Lemma \ref{lemma1} we now obtain that \[\modu(f(\Gamma(\ell,L)))\leq C'\left(\log \frac{L}{\ell}\right)^{1-n}.\] Notice now that $U(f^{-1}(x),L)$ contains a ball $B(x,r)$  and $U(f^{-1}(x),\ell)$ has a component $E$ which is contained in the same ball $B(x,r)$. By considering the connected component $A$ of $U(f^{-1}(x),L)$ containing $B(x,r)$ and the connected component $E$ of $U(f^{-1}(x),\ell)$ that is contained in $B(x,r)$ we obtain a ring domain $R(\overline{E},A^c)$, using the terminology in \cite[Chapter 11]{Vaeisaelae1971}. By using  \cite[Theorem 11.7]{Vaeisaelae1971}, we have that the modulus of the path family connecting the boundaries of $E$ and $A^c$ is at least $a_n$ for some constant $a_n$ that depends on $n$. Thus for all small enough $r$, $\modu(\Gamma(\ell,L))>a_n$. Hence we obtain $L(x,r)/\ell(x,r)<C $, for some constant $C>0$ and all small $r>0$. Hence, $H(x,f)<C$ as we wanted.
	\end{proof}

\section{Proof of the second half of Theorems \ref{thmmain} and \ref{thmmetric}}\label{section4}
In this section we state and prove the second half of Theorems \ref{thmmain} and \ref{thmmetric}. We start with Theorem \ref{thmmetric}.
\begin{theorem}\label{thm3}
	Let $f:\Omega\to f(\Omega)\subset \R^m$ be an embedding with $f(\Omega)$ being an Ahlfors $n$-regular metric space with the euclidean metric. If there exist constants $H,C>0$ and an $n$-volume form with constant coefficients $\omega$  such that $\mu_f\leq C \star f^*\omega$ and $H_f(x)\leq H$ for all $x\in \Omega$ then $f$ is a quasiconformal $\omega$-curve.
\end{theorem}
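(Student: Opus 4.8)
The aim is to verify the two requirements of Definition~\ref{def1}: that $f\in W^{1,n}_{loc}(\Omega,\R^m)$, and that $(||\omega||\circ f)\,|Df(x)|^n\le K\,\star f^*\omega$ for a.e.\ $x\in\Omega$ and some $K\ge1$ (that $f$ is an embedding is already assumed). First I would obtain the Sobolev regularity by invoking Theorem~\ref{balogh}. Its hypotheses hold, at least after localizing to precompact subdomains (the conclusion is local): $\Omega$ is, locally, proper and supports a $1$-Poincar\'e inequality, $f(\Omega)$ is Ahlfors $n$-regular by assumption, and $H_f(x)\le H<\infty$ for every $x$ certainly gives $\liminf_{r\to0}L_f(x,r)/\ell_f(x,r)\le H$ for a.e.\ $x$. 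Hence $f\in N^{1,n}_{loc}(\Omega,f(\Omega))$; since the metric on $f(\Omega)$ is the induced Euclidean one (and $N^{1,n}$ does not see the target measure), this is the same as $N^{1,n}_{loc}(\Omega,\R^m)$, so Lemma~\ref{minimalgradient} yields $f\in W^{1,n}_{loc}(\Omega,\R^m)$ with minimal $n$-weak upper gradient $|Df|$. In particular each minor $J_I$ of $Df$ lies in $L^1_{loc}(\Omega)$, so $\star f^*\omega=\sum_I c_IJ_I\in L^1_{loc}(\Omega)$, and by the standing hypothesis $\mu_f\le C\,\star f^*\omega$ also $\mu_f\in L^1_{loc}(\Omega)$ (note too that $\star f^*\omega\ge\mu_f/C\ge0$ a.e.).

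Next I would establish a pointwise lower bound for $\mu_f$. Fix $x\in\Omega$. The bound $H_f(x)\le H$ forces $\ell_f(x,r)>0$ for all small $r$, and for each $\varepsilon>0$ it gives $L_f(x,r)\le(H+\varepsilon)\,\ell_f(x,r)$ once $r$ is small. Injectivity of $f$ shows that $|f(y)-f(x)|<\ell_f(x,r)$ implies $|y-x|<r$, i.e.\ $B(f(x),\ell_f(x,r))\cap f(\Omega)\subset f(B(x,r))$. Applying the lower half of Ahlfors $n$-regularity of $f(\Omega)$ to the ball of radius $\ell_f(x,r)$, we get for small $r$
\[
\mathcal{H}^n(f(B(x,r)))\ \ge\ \mathcal{H}^n(B(f(x),\ell_f(x,r))\cap f(\Omega))\ \ge\ c_1\,\ell_f(x,r)^n\ \ge\ \frac{c_1}{(H+\varepsilon)^n}\,L_f(x,r)^n .
\]
Dividing by $|B(x,r)|$, letting $r\to0$, using the elementary identity $\limsup_{r\to0}L_f(x,r)/r=\lip f(x)$, and finally letting $\varepsilon\to0$, I obtain
\[
\lip f(x)^n\ \le\ \frac{\omega_n H^n}{c_1}\,\mu_f(x)\qquad\text{for every }x\in\Omega .
\]

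Finally I would pass to the distortion inequality. By the previous step and $\mu_f\in L^1_{loc}$ we have $\lip f(x)<\infty$ for a.e.\ $x$, so Stepanov's differentiability theorem makes $f$ classically differentiable a.e.; at a point of differentiability the classical derivative agrees with the weak one and $|Df(x)|=\lip f(x)$. Combining this with the lower bound for $\mu_f$ and the hypothesis $\mu_f\le C\,\star f^*\omega$,
\[
|Df(x)|^n\ =\ \lip f(x)^n\ \le\ \frac{\omega_n H^n}{c_1}\,\mu_f(x)\ \le\ \frac{\omega_n H^n C}{c_1}\,\star f^*\omega\qquad\text{a.e.}
\]
Since $\omega$ has constant coefficients, $||\omega||\circ f$ is a constant, so this reads $(||\omega||\circ f)\,|Df(x)|^n\le K\,\star f^*\omega$ with $K=||\omega||\,\omega_nH^nC/c_1$; moreover $K\ge1$ automatically, because by the definition of the comass norm $\star f^*\omega\le(||\omega||\circ f)\,|Df|^n$ always. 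Together with the Sobolev regularity from the first step and the embedding hypothesis, this shows $f$ is a $K$-quasiconformal $\omega$-curve.

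The only genuinely deep ingredient is the first step, the passage from the infinitesimal metric control $H_f\le H$ to a Sobolev bound, which is exactly Theorem~\ref{balogh} and is used as a black box; the only care needed there is in reading properness, the $1$-Poincar\'e inequality and Ahlfors regularity of $f(\Omega)$ locally, since the Sobolev conclusion is local. After that the argument is elementary: the volume estimate uses only injectivity and Ahlfors regularity of $f(\Omega)$ — the one point to watch is that $H_f(x)\le H$ really does deliver $\ell_f(x,r)>0$ and $L_f\le(H+\varepsilon)\ell_f$ for small $r$ — and the last step is Stepanov's theorem together with some bookkeeping of constants.
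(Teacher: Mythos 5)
Your argument is correct and follows essentially the same route as the paper's proof of Theorem~\ref{thm3}: Sobolev regularity via Theorem~\ref{balogh}, then the lower Ahlfors bound on $\mathcal{H}^n(B(f(x),\ell_f(x,r))\cap f(\Omega))$ combined with $H_f\le H$ to control $\lip f$ by $\mu_f$, then Stepanov's theorem and the hypothesis $\mu_f\le C\star f^*\omega$. You in fact supply a few details the paper leaves implicit (the inclusion $B(f(x),\ell_f(x,r))\cap f(\Omega)\subset f(B(x,r))$ and the remark that $K\ge1$ follows from the comass inequality), but the approach is the same.
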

\begin{proof}
By Theorem \ref{balogh} we immediately obtain that $f\in W^{1,n}_{loc}(\Omega,\R^m)$. It remains to prove inequality \eqref{eq03}. To that end, we first need to show that $f$ is differentiable almost everywhere. Indeed, notice that $H(x,f)<H$ almost everywhere, implies that $L(x,r)^n\leq H^n\ell(x,r)^n\leq C \mathcal{H}^n(f(B(x,r)))$, for some constant $C$ that depends only on $n$. Thus \[\left(\frac{L(x,r)}{r}\right)^n\leq C'\frac{\mathcal{H}^n(f(B(x,r)))}{\mathcal{H}^n(B(x,r))}.\] This implies that \[\left(\limsup_{y\to x}\frac{|f(y)-f(x)|}{|y-x|}\right)^n<C' \mu_f,\] for almost every $x\in \Omega$. Since the volume derivative of $f$ is an $L^1_{loc}$ function this implies that $\limsup_{y\to x}\frac{|f(y)-f(x)|}{|y-x|}<\infty$ for almost every $x\in \Omega$. The Rademacher--Stepanoff theorem, see \cite[Theorem 6.1.1]{Iwaniec2001}, implies that $f$ is differentiable almost everywhere. On points where $f$ is differentiable now we have that \[\limsup_{r\to 0}\frac{\sup_{|x-y|\leq r}|f(x)-f(y)|}{r}\leq H\limsup_{r\to 0}\frac{\inf_{|x-y|\geq r}|f(x)-f(y)|}{r}.\] This implies that \[|Df(x)|^n\leq K \limsup_{r\to0}\frac{\mathcal{H}^n(f(B(x,r)))}{\mathcal{H}^n(B(x,r))}=K\mu_f\leq KC \star f^*\omega.\] Thus $f$ is a quasiconformal $\omega$-curve.
	\end{proof}
Next we prove the second half of Theorem \ref{thmmain}.
\begin{theorem}\label{thm4}
	Let $f:\Omega\to f(\Omega)\subset \R^m$ be an embedding with $f(\Omega)$ being  locally finite with the Hausdorff $n$-measure of $\R^m$. If there exist constants $C>0$, $K\geq 1$ and an $n$-volume form with constant coefficients $\omega$  such that $\mu_f\leq C \star f^*\omega$ and $f$ either
	\begin{enumerate}
		\item is in $N^{1,n}_{loc}(\Omega,\R^m)$ and $\lip f(x)^n\leq K \mu_f$ for almost every $x\in \Omega$ or
		\item $\modu \Gamma\leq K\modu f(\Gamma)$, for any path family in $\Omega$,
	\end{enumerate}  then $f$ is a quasiconformal $\omega$-curve.
\end{theorem}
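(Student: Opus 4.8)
The plan is to treat the two hypotheses separately; in either case the task reduces to verifying that $f\in W^{1,n}_{loc}(\Omega,\R^m)$ and that
\[(\|\omega\|\circ f)(x)\,|Df(x)|^n\leq K'\,\star f^*\omega(x)\quad\text{for a.e. }x\in\Omega\]
for some constant $K'\geq1$, since $f$ is assumed to be a topological embedding. A few observations will be common to both cases. Because $\omega$ has constant coefficients, $\|\omega\|$ is a constant and $\star f^*\omega$ is a fixed linear combination, with constant coefficients, of the $n\times n$ subdeterminants $J_I$ of $Df$; hence once $f\in W^{1,n}_{loc}(\Omega,\R^m)$ is known, Hölder's inequality gives $J_I\in L^1_{loc}(\Omega)$ and so $\star f^*\omega\in L^1_{loc}(\Omega)$, while the standing hypothesis $0\leq\mu_f\leq C\,\star f^*\omega$ forces $\star f^*\omega\geq0$ a.e. and $\mu_f\in L^1_{loc}(\Omega)$. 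With these facts the reduction above will follow by multiplying through by the constant $\|\omega\|$ and absorbing constants, using $\star f^*\omega\geq0$.

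For hypothesis (1), I would first apply Lemma \ref{minimalgradient} to get $f\in W^{1,n}_{loc}(\Omega,\R^m)$ with minimal $n$-weak upper gradient $|Df|$. By the previous paragraph $\mu_f\in L^1_{loc}(\Omega)$, so $\mu_f(x)<\infty$, and hence $\lip f(x)^n\leq K\mu_f(x)<\infty$, for a.e. $x\in\Omega$; the Rademacher--Stepanov theorem (as used in the proof of Theorem \ref{thm3}, see \cite[Theorem 6.1.1]{Iwaniec2001}) then gives that $f$ is differentiable a.e., and at each point of differentiability $\lip f(x)=|Df(x)|$. Therefore $|Df(x)|^n\leq K\mu_f(x)\leq KC\,\star f^*\omega(x)$ a.e., and taking $K'=\max\{1,KC\|\omega\|\}$ yields the desired inequality.

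For hypothesis (2), since $f$ is an embedding, $f\colon\Omega\to f(\Omega)$ is a homeomorphism, and $\Omega$ (with Lebesgue measure) and $f(\Omega)$ (with $\mathcal{H}^n$, which is locally finite by assumption) are separable, locally finite metric measure spaces. Williams' theorem (Theorem \ref{williams}) with $Q=n$, applied to $\modu\Gamma\leq K\modu f(\Gamma)$, then gives $f\in N^{1,n}_{loc}(\Omega,f(\Omega))$ together with $\rho_f(x)^n\leq K\mu_f(x)$ a.e. I would then note that, since $f(\Omega)$ carries the metric induced from $\R^m$ and the $n$-modulus of path families in $\Omega$ is computed with the Lebesgue measure regardless of the target, $N^{1,n}_{loc}(\Omega,f(\Omega))$ and the minimal $n$-weak upper gradient of $f$ agree with those obtained by viewing $f$ as a map into $\R^m$; hence by Lemma \ref{minimalgradient}, $f\in W^{1,n}_{loc}(\Omega,\R^m)$ and $\rho_f=|Df|$. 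This gives $|Df(x)|^n\leq K\mu_f(x)\leq KC\,\star f^*\omega(x)$ a.e., and one concludes exactly as in case (1).

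The computations here are routine; the only points needing care are the verification that the hypotheses of Williams' theorem truly hold — above all that $(f(\Omega),\mathcal{H}^n)$ is locally finite, which is precisely the standing assumption — and the identification of the Newton--Sobolev space and minimal weak upper gradient of $f\colon\Omega\to f(\Omega)$ with those of $f\colon\Omega\to\R^m$. The conceptual heart of both implications is that it is exactly the comparison $\mu_f\leq C\,\star f^*\omega$ which upgrades the metric and geometric distortion bounds to the analytic bound $|Df(x)|^n\leq KC\,\star f^*\omega(x)$ defining a quasiregular $\omega$-curve.
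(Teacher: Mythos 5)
Your proof is correct, and its overall architecture (reduce both hypotheses to the pointwise bound $|Df(x)|^n\leq K\mu_f\leq KC\,\star f^*\omega$ and then absorb the constant comass) matches the paper's. The one genuinely different step is how you pass from $\lip f$ to $|Df|$ in case (1): you use finiteness of $\mu_f$ plus the Rademacher--Stepanoff theorem to get differentiability a.e.\ and hence the pointwise identity $\lip f(x)=|Df(x)|$, which is the technique the paper uses for the metric case (Theorem \ref{thm3}) but not here. The paper instead shows, via Fuglede's lemma (Lemma \ref{fugledelemma}) and absolute continuity on $n$-a.e.\ path, that $\lip f$ is an $n$-weak upper gradient of $f$, and then invokes minimality of $|Df|$ (Lemma \ref{minimalgradient}) to conclude $|Df|\leq\lip f$ a.e.; that route needs only the inequality and no differentiability. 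In case (2) your treatment is in fact slightly more careful than the paper's: Williams' theorem (Theorem \ref{williams}) literally outputs a bound on the minimal $n$-weak upper gradient $\rho_f$, not on $\lip f$, and you address explicitly both the identification $N^{1,n}_{loc}(\Omega,f(\Omega))=N^{1,n}_{loc}(\Omega,\R^m)$ with $\rho_f=|Df|$ (legitimate, since the induced metric on $f(\Omega)$ is the Euclidean one and the modulus is computed with Lebesgue measure on $\Omega$) and the verification of Williams' standing hypotheses; the paper compresses this into the assertion that (1) and (2) are equivalent. Your remark that $\mu_f\geq0$ forces $\star f^*\omega\geq0$ a.e.\ is a small point the paper leaves implicit but which is genuinely needed for the distortion inequality to make sense.
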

\begin{proof}
	From Williams \cite[Theorem 1.1]{Williams2012} we have that under our assumptions conditions (1) and (2) are equivalent. We already know that $f\in N^{1,n}_{loc}(\Omega,\R^m)$ if and only if $f\in W^{1,n}_{loc}(\Omega,\R^m)$. Also  it is easy to see that, for   $f\in W^{1,n}_{loc}(\Omega,\R^m)$, $\lip f$ is an $n$-weak upper gradient.  Indeed, by Lemma~\ref{fugledelemma} $f$ is absolutely continuous on $n$-almost every path. Thus for $n$-almost every such path $\gamma:[a,b]\to \Omega$ we have that \begin{equation*}
		\begin{aligned}|f(\gamma(a))-f(\gamma(b))|&= \int_0^{\ell(\gamma)}|(f(\gamma(t)))'|dt\\&=\int_0^{\ell(\gamma)}|Df(\gamma(t))(\gamma'(t))|dt\\&\leq \int_0^{\ell(\gamma)}\lip f(\gamma(t))dt=\int_\gamma\lip f ds.\end{aligned}\end{equation*}
	
	 We know now from Lemma \ref{minimalgradient} that $|Df(x)|$ is the minimal upper gradient  and thus $|Df(x)|\leq \lip f(x)$. Hence\[|Df(x)|^n\leq \lip f(x)^n\leq K \mu_f\leq K C \star f^*\omega.\] Thus $f$ is a quasiconformal $\omega$-curve.
	\end{proof}
\section{Proof of Theorems \ref{thm5} and \ref{thmpath}}\label{section5}
First we are going to prove that any embedding $f:\Omega\to\R^m$ in $W^{1,p}_{loc}(\Omega, \R^m)$, $p>n$ is a homeomorphism when looked as a map from $\Omega$ to $f(\Omega)$ and $f(\Omega)$ is equipped with its intrinsic metric.
\begin{lemma}\label{lemmahomeo}
	Let $f:\Omega\to\R^m$ be an embedding in $W^{1,p}_{loc}(\Omega,\R^m)$ for some $p>n$. Suppose that $f(\Omega)$ is equipped with its intrinsic metric. Then $f:(\Omega,e)\to (f(\Omega),d)$ is a homeomorphism.
\end{lemma}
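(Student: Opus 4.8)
The plan is to treat the two one-sided continuity statements separately. Since $d(y,y')\ge|y-y'|$ for all $y,y'\in f(\Omega)$, the identity map $(f(\Omega),d)\to(f(\Omega),e)$ is $1$-Lipschitz; composing it with the Euclidean inverse $f^{-1}\colon(f(\Omega),e)\to(\Omega,e)$, which is continuous because $f$ is a topological embedding, shows immediately that $f^{-1}\colon(f(\Omega),d)\to(\Omega,e)$ is continuous. Moreover $f$ is a bijection of $\Omega$ onto $f(\Omega)$, and $d$ is a genuine (finite) metric by Proposition \ref{proppath}. Hence the lemma reduces to showing that $f\colon(\Omega,e)\to(f(\Omega),d)$ is continuous, i.e. that $d(f(x),f(x_0))\to0$ as $x\to x_0$, for every $x_0\in\Omega$.

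Fix $x_0$ and a ball $B(x_0,r_0)$ with $\overline{B}(x_0,r_0)\subset\Omega$. For $x$ with $\rho:=|x-x_0|<r_0/2$, set $c=\tfrac12(x_0+x)$ and consider the two cones of line segments $\{[x_0,z]:z\in B(c,\rho/4)\}$ and $\{[z,x]:z\in B(c,\rho/4)\}$, all contained in $B(x_0,2\rho)$. Parametrising the segments issuing from $x_0$ by their endpoint $z$, performing the change of variables $z\mapsto u=x_0+t(z-x_0)$ along each segment, applying H\"older's inequality on the ball of radius $\simeq t\rho$ swept out at parameter $t$, and using that $\int_0^1 t^{-n/p}\,dt<\infty$ — which holds precisely because $p>n$ — one obtains, after averaging in $z$ (and doing the same with $x_0$ replaced by $x$), that for a set of $z\in B(c,\rho/4)$ of positive measure
\[
\int_{[x_0,z]}|Df|\,d\mathcal{H}^1+\int_{[z,x]}|Df|\,d\mathcal{H}^1\ \le\ C(n,p)\,\rho^{\,1-n/p}\Big(\int_{B(x_0,2\rho)}|Df|^p\Big)^{1/p}.
\]
On the other hand $f\in W^{1,p}_{loc}(\Omega,\R^m)$, so by Fuglede's lemma (Lemma \ref{fugledelemma}) $f$ is absolutely continuous on $p$-almost every rectifiable path; arguing exactly as in the proof of Proposition \ref{proppath} (again it is the higher integrability $p>n$ that forces the set of exceptional endpoints inside such a cone to be Lebesgue-null), $f$ is absolutely continuous on $[x_0,z]$ and on $[z,x]$ for almost every $z\in B(c,\rho/4)$. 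Choosing $z$ in the intersection of these two full-measure sets with the positive-measure set above, the broken segment $\gamma_z=[x_0,z]\cup[z,x]$ joins $x_0$ to $x$, the map $f\circ\gamma_z$ is absolutely continuous, and the chain rule gives
\[
d(f(x_0),f(x))\ \le\ \ell(f\circ\gamma_z)\ \le\ \int_{[x_0,z]}|Df|\,d\mathcal{H}^1+\int_{[z,x]}|Df|\,d\mathcal{H}^1\ \le\ C(n,p)\,\rho^{\,1-n/p}\Big(\int_{B(x_0,r_0)}|Df|^p\Big)^{1/p}.
\]
Since $1-n/p>0$ and the last integral is finite and independent of $x$, the right-hand side tends to $0$ as $x\to x_0$, which is the required continuity; together with the first paragraph this proves that $f$ is a homeomorphism.

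The delicate point is the second paragraph. A single straight segment from $x_0$ to $x$ need not be one along which $f$ is absolutely continuous, so one is forced to average over a whole cone of segments; a cone based at a point, however, carries the singular Jacobian factor $t^{-n}$ near its apex, and this is integrable against the H\"older gain $t^{n(1-1/p)}$ only when $p>n$ — the standing hypothesis here, and for quasiconformal curves exactly the higher integrability furnished by Theorem \ref{higherint}. A second subtlety is that one needs \emph{genuine} absolute continuity of $f$ along the chosen broken segment in order to bound the length of its image by $\int|Df|$, not merely finiteness of $\int|Df|$; this is supplied by the Fuglede-type argument already used in the proof of Proposition \ref{proppath}.
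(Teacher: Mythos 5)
Your proof is correct and rests on the same mechanism as the paper's: both reduce the lemma to continuity of $f\colon(\Omega,e)\to(f(\Omega),d)$ and bound $d(f(x_0),f(x))$ by the image-length of a broken segment selected, via Fuglede's lemma, from a cone of segments on which H\"older's inequality and $p>n$ control $\int|Df|$. The only difference is presentational: the paper runs the cone argument by contradiction (assuming all image paths have length $\ge\varepsilon'$ and contradicting $|Df|\in L^p_{loc}$), whereas you extract the direct Morrey-type bound $d(f(x_0),f(x))\le C\rho^{1-n/p}\|Df\|_{L^p(B(x_0,2\rho))}$, which is a cleaner and slightly stronger (locally H\"older) conclusion.
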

\begin{proof}
	It is easy to show that $f^{-1}:(f(\Omega),d)\to (\Omega,e)$ is continuous and $f$ is obviously injective. So we only have to prove that $f:(\Omega,e)\to (f(\Omega),d)$ is continuous as well. To that end, let $x\in \Omega$ and $B(x,\delta)$ ball of radius $\delta>0$. We will show that for all $\varepsilon,\varepsilon'>0$ the points in $B(f(x),\varepsilon)\cap f(\Omega)$ are at a distance at most $\varepsilon'$ with the intrinsic metric when $\delta$ is small enough. Indeed, suppose that there exists a $\varepsilon'>0$ such that for all $\varepsilon$ there existed point$y\in B(x,\delta)$ and  $f(y)\in B(f(x),\varepsilon)\cap f(\Omega)$ such that $d(f(x),f(y))>\varepsilon'$ for all $\delta>0$. This implies that for all paths $\gamma$ in $f(\Omega)$ connecting $f(x)$ to $f(y)$ we have that $\ell(\gamma)>\varepsilon'$. By arguing as  in Proposition \ref{proppath} we can find a path family $\gamma_r$ connecting $x$ and $y$ on which $f$ is absolutely continuous. For every path $\gamma_r$ in this path family we will have that  $$\ell(f\circ \gamma_r)=\int_a^b |D((f\circ\gamma_r)(t))| dt\geq \varepsilon',$$ for some $a,b$ which tend to the same number as $\delta$ gets smaller. Thus \[\int_{-1}^1\int_a^b |D((f\circ\gamma_r)(t))|^p (t+1)dtdr\geq2\varepsilon',\] for all $\delta>0$. By a change of variables we see now that this integral is greater than the integral of$|Df(x)|$ over some domain that shrinks to a zero measure set as $\delta$ gets smaller but this is impossible since $|Df(x)|$ is in $L^p_{loc}(\Omega)$. Thus for all $\varepsilon'$ and all $y\in B(x,\delta)$ we have that $d(f(x),f(y))\leq \varepsilon'$ when $\delta$ is small enough and $f$ is continuous.
\end{proof}

To prove Theorem \ref{thm5} we are going to need the following lemma.
\begin{lemma}\label{lemmaradon}
	Let $f:\Omega\to \R^m$ be a quasiconformal $\omega$-curve, where $\Omega$ domain in $\R^n$ and $\omega$ a constant coefficient $n$-volume form in $\R^m$. Then 
	\begin{enumerate}
		\item $f$ is almost everywhere metrically differentiable and, for almost every $x\in \Omega$, $r>0$ and $\mathcal{H}^{n-1}$-almost every $v\in S^{n-1}$, we have that
	\begin{equation}
		\md(f,x)(v)=\lim_{r\to0}\frac{d(f(x+rv),f(x))}{r}=|Df(x)(v)|,
	\end{equation}

\item and for almost every $x\in \Omega$ we have that 
\begin{equation}
	\lip{}_d f(x)=\limsup_{y\to x}\frac{d(f(x),f(y))}{|x-y|}= |Df(x)|.
\end{equation}
\end{enumerate}
\end{lemma}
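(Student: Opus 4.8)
The plan is to reduce both statements to the Euclidean differentiability of $f$, which is already available from Corollary \ref{lemmadif} since $f$ is a quasiconformal $\omega$-curve and hence lies in $W^{1,p}_{loc}(\Omega,\R^m)$ for some $p>n$ by Theorem \ref{higherint}. Fix a point $x\in\Omega$ at which $f$ is (classically) differentiable; by Corollary \ref{lemmadif} this holds almost everywhere. The key comparison is the two-sided estimate $|f(z)-f(w)|\le d(f(z),f(w))$ on one side, and, on the other side, a bound of $d(f(z),f(w))$ by the Euclidean length of a short path joining $z$ and $w$ along which $f$ is absolutely continuous. The upper bound $\limsup_{y\to x} d(f(x),f(y))/|x-y|\le|Df(x)|$ at a point of differentiability is the crux, and it is where the machinery of Proposition \ref{proppath} (and the Fuglede-type argument behind it) enters.

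First I would establish part (2). The lower bound is immediate: for any $y$, $d(f(x),f(y))\ge|f(x)-f(y)|$, so $\lip_d f(x)\ge\lip f(x)=|Df(x)|$ almost everywhere (the last equality again by Corollary \ref{lemmadif}). For the upper bound, fix $x$ a point of differentiability and let $\varepsilon>0$. For $y$ close to $x$, I want a rectifiable path $\gamma$ in $\Omega$ from $x$ to $y$ with $\ell(f\circ\gamma)\le(|Df(x)|+\varepsilon)|x-y|$. The natural candidate is (almost) the straight segment $[x,y]$; using the argument in the proof of Proposition \ref{proppath} — a Fubini/Fuglede argument over a small family of nearly-straight polygonal paths through a point near the midpoint — one finds, for each such $y$, a piecewise-linear path $\gamma$ joining $x$ to $y$, staying within distance $o(|x-y|)$ of the segment $[x,y]$, on which $f$ is absolutely continuous, so that $d(f(x),f(y))\le\ell(f\circ\gamma)=\int_\gamma|Df(\gamma(t))(\gamma'(t))|\,dt$. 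Since $f$ is differentiable at $x$ and $\gamma$ stays in a small neighbourhood of $x$ whose direction vectors are within $o(1)$ of $(y-x)/|y-x|$, the integrand is at most $(|Df(x)|+\varepsilon)|\gamma'(t)|$ once $|x-y|$ is small, and the total length of $\gamma$ is $(1+o(1))|x-y|$; this yields $d(f(x),f(y))\le(|Df(x)|+\varepsilon)|x-y|(1+o(1))$, hence $\lip_d f(x)\le|Df(x)|$. Combined with the lower bound this gives $\lip_d f(x)=|Df(x)|$ a.e.

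For part (1), I would run the directional version of the same scheme. The $\ge$ direction, $\md(f,x)(v)\ge|Df(x)(v)|$, again follows from $d\ge|\cdot|$ once metric differentiability is known. Metric differentiability itself, together with the a.e.\ formula $\md(f,x)(v)=\lim_{r\to0}d(f(x+rv),f(x))/r$, is exactly the content of Theorem \ref{lemmakarmanova} applied to $f:\Omega\to(f(\Omega),d)$ — valid because $f\in N^{1,p}_{d,loc}(\Omega,f(\Omega))$ for $p>n$ by Lemma \ref{lemmasobolev}. So the only thing left is to identify the seminorm: I would show that for a.e.\ $x$ and $\mathcal H^{n-1}$-a.e.\ direction $v$, the limit $\lim_{r\to0} d(f(x+rv),f(x))/r$ equals $|Df(x)(v)|$. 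The $\le$ bound comes from the path-construction argument above specialized to the direction $v$: for a.e.\ $v$ (those for which the segment in direction $v$ is, after the Fuglede argument, one on which $f$ is absolutely continuous) we get $d(f(x+rv),f(x))\le\int_0^r|Df(x+tv)(v)|\,dt=(|Df(x)(v)|+o(1))r$ by differentiability at $x$. Hence $\md(f,x)(v)=|Df(x)(v)|$ for a.e.\ $x$ and $\mathcal H^{n-1}$-a.e.\ $v$, as claimed.

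The main obstacle is the upper bound $d(f(x),f(y))\le(|Df(x)|+\varepsilon)|x-y|$ near a differentiability point: one cannot simply use the segment $[x,y]$, since $f$ need only be absolutely continuous on $n$-a.e.\ line, so a perturbation/selection argument (as in Proposition \ref{proppath}) is required to produce a good connecting path that still has length $(1+o(1))|x-y|$ and along which $f$ is absolutely continuous. Once that is in place, the rest is bookkeeping with differentiability at $x$ and the inequalities $\mathcal H^n_e\le\mathcal H^n_d$ and $|\cdot|\le d$.
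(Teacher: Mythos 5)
Your overall architecture matches the paper's: metric differentiability and the identification of $\md(f,x)(v)$ come from Theorem \ref{lemmakarmanova} via $f\in N^{1,p}_{d,loc}(\Omega,f(\Omega))$ (Lemma \ref{lemmasobolev} plus higher integrability), the lower bounds come from $|f(z)-f(w)|\le d(f(z),f(w))$ together with Euclidean differentiability, and the upper bounds come from estimating $d$ by the length of the image of a (nearly) straight segment on which $f$ is absolutely continuous, selected by a Fuglede argument. This is exactly the paper's strategy.

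There is, however, a genuine gap in your key estimate. You bound $\ell(f\circ\gamma)=\int_\gamma|Df(\gamma(t))(\gamma'(t))|\,dt$ by asserting that, since $f$ is differentiable at $x$ and $\gamma$ stays near $x$, the integrand is at most $(|Df(x)|+\varepsilon)|\gamma'(t)|$; likewise in part (1) you write $\int_0^r|Df(x+tv)(v)|\,dt=(|Df(x)(v)|+o(1))r$ ``by differentiability at $x$''. Differentiability of $f$ at the single point $x$ gives no pointwise control on $Df$ at nearby points: it controls the difference quotients $|f(z)-f(x)-Df(x)(z-x)|/|z-x|$, not the values $|Df(\gamma(t))|$ for $t\neq0$, and $|Df|$ can be unbounded in every neighbourhood of a point of differentiability (already for $t\mapsto t^2\sin(1/t^2)$ on $\R$). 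Nor can the length bound be recovered from the Taylor expansion at $x$, since $\ell(f\circ\gamma)$ is a supremum of sums $\sum_i|f(\gamma(t_{i+1}))-f(\gamma(t_i))|$ over partitions whose points are all distinct from $x$, and triangulating each term through $x$ produces a sum that grows with the number of partition points. What you actually need is that the one-dimensional averages $\frac1r\int_0^r|Df(x+tv)(v)|\,dt$ converge to $|Df(x)(v)|$ as $r\to0$, i.e.\ that $t=0$ is a Lebesgue point of $t\mapsto|Df(x+tv)(v)|$; this holds for a.e.\ $x$ and $\mathcal H^{n-1}$-a.e.\ $v$ because $|Df|\in L^p_{loc}$, and it is precisely the Lebesgue differentiation step the paper invokes. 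For part (2), where the direction of the connecting segment varies with $y$, a fixed-direction Lebesgue point is not enough; the paper sidesteps this uniformity issue by first proving the inequality for smooth $f$ and then passing to the limit using density of smooth maps in $W^{1,n}_{loc}=N^{1,n}_{d,loc}$, and your argument needs an analogous device rather than differentiability at $x$.
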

\begin{proof}
	(1) By \ref{lemmasobolev} and Theorem  \ref{higherint} we obtain that $f\in N^{1,p}_{loc}(\Omega,f(\Omega))$ for some $p>n$. Thus by Theorem \ref{lemmakarmanova} we have that $f$ is almost everywhere metrically differentiable. 	To prove \begin{equation}
		\md(f,x)(v)=\lim_{r\to0}\frac{d(f(x+rv),f(x))}{r}=|Df(x)(v)|,
	\end{equation} let $x$ be  a point in $\Omega$  and  $v$ is any vector in $S^{n-1}$. Let $$\gamma_v:[0,r]\to \Omega, \ \gamma_v(t)=x+tv$$ be a straight line segment in $\Omega$ connecting $x$ and  $x+rv$ parametrized by arc length, when $r$ is small enough. By the definition of $d$ we have that 
	\begin{equation}\label{eq611}
	\left| d(f(x+rv),f(x))\right|\leq 	\left| \ell(f\circ \gamma_v)\right|.
	\end{equation}
 By Lemma \ref{fugledelemma} we have that  $f$ is absolutely continuous on $\gamma_{v|[\delta,r]}$, for all $\delta>0$ and almost all $v\in S^{n-1}$. Thus for almost all $v\in S^{n-1}$ we have that \begin{equation}
	\begin{aligned}
	\ell(f\circ \gamma_{v|[\delta,r]})&=\int_\delta^r |(f\circ\gamma_v)'(t)|dt=\int_\delta^r|Df(\gamma_{v|[\delta,r]}(t))(v)|dt.
	\end{aligned}
\end{equation}
Thus we have proved that 
\begin{equation}\label{eq006}
		\left| d(f(x+rv),f(x+\delta v))\right|\leq \int_\delta^r|Df(\gamma_{v|[\delta,r]}(t))(v)|dt,
\end{equation}
holds a.e. in $\Omega$. Taking $\delta\to 0$ we have that 
\begin{equation}\label{eq007}
	\left| d(f(x+rv),f(x))\right|\leq \int_0^r|Df(\gamma_{v}(t))(v)|dt.
\end{equation}
 Since $|Df(x)|$ is in $L^1_{loc}(\Omega)$, we have that for almost all $v\in S^{n-1}$, the function $|Df(\gamma_v(t))|$, $t\in [0,r]$ is integrable. Thus by Lebesgue's differentiation theorem we now obtain
\[\limsup_{r\to0}\frac{d(f(x+rv),f(x))}{r}\leq \limsup_{r\to0}\frac{1}{r}\int_0^r|Df(\gamma_v(t))(v)|dt=|Df(x)(v)|.\]
 On the other hand, we know that 
\[\frac{|f(x+rv)-f(x)|}{r}\leq\frac{d(f(x+rv),f(x))}{r}\] and thus \[|Df(x)(v)|\leq\liminf_{r\to0}\frac{d(f(x+rv),f(x))}{r}.\] Hence, the limit exists and we have equality.

(2) Arguing as in (1) we choose a line segment $\gamma$ connecting $x$ and $y$ and assume that $f$ is smooth. Then we have that 
\begin{equation}
	d(f(y),f(x))\leq \int_0^{\ell(\gamma)}|Df(\gamma(t))(\gamma'(t))|dt\leq \int_0^{\ell(\gamma)}|Df(\gamma(t))|dt.
\end{equation}
Thus
\begin{equation}
	\frac{d(f(y),f(x))}{|x-y|}\leq \frac{1}{|x-y|}\int_0^{|x-y|}|Df(\gamma(t))|dt.
\end{equation}
Again by Lebesgue's differentiation theorem we have that \begin{equation}
	\limsup_{y\to x} \frac{d(f(x),f(y))}{|x-y|}\leq |Df(x)|.
\end{equation}
Since smooth functions are dense in $W^{1,n}_{loc}(\Omega,\R^m)$ and $W^{1,n}_{loc}(\Omega,\R^m)=N^{1,n}_{d,loc}(\Omega,f(\Omega))$ we have that the above inequality is true for functions in $W^{1,n}_{loc}(\Omega,\R^m)$ and for almost all $x\in \Omega$. 
On the other hand, since $f$ is differentiable almost everywhere we have that \[|Df(x)|=\limsup_{y\to x} \frac{|f(x)-f(y)|}{|x-y|}\leq \limsup_{y\to x} \frac{d(f(x),f(y))}{|x-y|}\] and thus we have equality.
\end{proof}
\begin{proof}[Proof of Theorem \ref{thm5}]
	Theorem \ref{lemmakarmanova} implies that the Radon-Nikodym derivative of the measure $\nu_1(A)=\mathcal{H}^n_d(f(A))$ is $J(\md(f,x))$. But we know from Lemma \ref{lemmaradon} that $J(\md(f,x))=J(|Df(x)|)$. Now Corollary \ref{lemmaarea} and Theorem \ref{lemmakarmanova} imply that 
	\begin{equation}
		\nu_2(A)=\mathcal{H}^n_e(f(A))=\int_A \sqrt{\det Df(x)^TDf(x)}dx=\int_A J(|Df(x)|)dx.
	\end{equation}
Thus the Radon-Nikodym derivative of the measure $\nu_2$ coincides with that of the measure $\nu_1$. Hence $\mathcal{H}^n_d(f(A))=\mathcal{H}^n_e(f(A))$ as we wanted.
\end{proof}
Next we prove the first half of Theorem \ref{thmpath}.
\begin{theorem}\label{corollary}
	Let $f:\Omega\to \R^m$ be a quasiconformal $\omega$-curve for a constant coefficient $n$-volume form $\omega$ in $\R^m$. Suppose that $f:\Omega\to f(\Omega)$ is a homeomorphism when $f(\Omega)$ is equipped with its intrinsic metric. Then $f$ is in $N_{d,loc}^{1,n}(\Omega,f(\Omega))$ and satisfies the inequality 
	\begin{equation}
		\lip{}_d f(x)^n\leq K \mu_{d,f},
	\end{equation}
	for some $K\geq1$ and almost every $x\in \Omega$. Moreover, there exists a constant $K'\geq1$ such that for any path family $\Gamma$ in $\Omega$ we have that  
	\begin{equation}
		\modu\Gamma \leq K' \modup f(\Gamma).
	\end{equation}
\end{theorem}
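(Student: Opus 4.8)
The plan is to transfer the problem to the Euclidean target, where the corresponding estimates are already available (Theorem~\ref{thm2}), by means of the measure identity in Theorem~\ref{thm5}, and then to feed the resulting gradient inequality into Williams' Theorem~\ref{williams} to get the modulus bound. Note first that the homeomorphism hypothesis is in any case automatic here by Lemma~\ref{lemmahomeo}.

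First I would record the regularity and the measure identity. By Theorem~\ref{higherint} we have $f\in W^{1,p}_{loc}(\Omega,\R^m)$ for some $p>n$, so Lemma~\ref{lemmasobolev} gives $f\in N^{1,n}_{d,loc}(\Omega,f(\Omega))$. Moreover Theorem~\ref{thm5} yields $\mathcal H^n_d(f(A))=\mathcal H^n_e(f(A))$ for every measurable $A\subset\Omega$; dividing by $\mathcal L^n(B(x,r))$ and letting $r\to 0$ gives the pointwise identity $\mu_{d,f}(x)=\mu_f(x)$. Since $f(\Omega)$ is separable and, by the area formula, locally finite for $\mathcal H^n_d$, it is a metric measure space to which Williams' theorem can be applied.

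Next I would treat the Lipschitz inequality. By Lemma~\ref{lemmaradon}(2) we have $\lip_d f(x)=|Df(x)|$ for a.e.\ $x$, while the computation in the proof of Theorem~\ref{thm2}---the quasiregular curve inequality \eqref{eq03}, the area formula, Cauchy--Binet and Hadamard---produces a constant $K$ with $|Df(x)|^n\le K\mu_f(x)$ a.e.; combining with $\mu_f=\mu_{d,f}$ gives $\lip_d f(x)^n\le K\mu_{d,f}(x)$ a.e. For the modulus statement it remains to control the minimal $n$-weak upper gradient $\rho_{d,f}$, and it suffices to show $\rho_{d,f}\le|Df|$ a.e. Arguing exactly as in the second half of the proof of Lemma~\ref{lemmasobolev}, Fuglede's Lemma~\ref{fugledelemma} makes $f$ absolutely continuous on $n$-almost every rectifiable path $\gamma$ in $\Omega$, and for such $\gamma$ (parametrized by arclength) the intrinsic length of $f\circ\gamma$ equals its Euclidean length, whence $d(f(\gamma(a)),f(\gamma(b)))\le\ell(f\circ\gamma)=\int|Df(\gamma(t))(\gamma'(t))|\,dt\le\int_\gamma|Df|\,ds$; thus $|Df|$ is an $n$-weak upper gradient of $f$ with respect to $d$ and dominates $\rho_{d,f}$. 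Consequently $\rho_{d,f}(x)^n\le|Df(x)|^n\le K\mu_f(x)=K\mu_{d,f}(x)$ a.e.

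Finally, applying Williams' Theorem~\ref{williams} to the homeomorphism $f\colon(\Omega,\mathcal L^n)\to(f(\Omega),d,\mathcal H^n_d)$ turns the last inequality into $\modu\Gamma\le K\,\modup f(\Gamma)$ for every path family $\Gamma$ in $\Omega$, which is the asserted bound (with $K'=K$). \textbf{The main obstacle} is the third step: identifying the minimal weak upper gradient for the intrinsic metric and keeping straight which measure governs the "$n$-almost every path" conditions---in particular the equality of intrinsic and Euclidean lengths for rectifiable subarcs of $f(\Omega)$, and the observation that the exceptional path families in $\Omega$ are measured by Lebesgue measure and are therefore insensitive to the metric placed on the target. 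Everything else is a direct citation of the results of Section~\ref{prelim} together with Theorems~\ref{thm2} and~\ref{thm5}.
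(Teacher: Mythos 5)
Your proof is correct, and it reaches the same two conclusions as the paper but by a somewhat different route on the modulus half. For the Lipschitz inequality the two arguments essentially coincide: Lemma \ref{lemmaradon}(2) gives $\lip{}_d f=|Df|$ a.e., the computation from Theorem \ref{thm2} gives $|Df|^n\le K\mu_f$, and Theorem \ref{thm5} identifies $\mu_f$ with $\mu_{d,f}$ (the paper leaves this last identification implicit; you make it explicit, which is a small improvement). For the modulus inequality the paper argues at the level of path families in the target: since $\mathcal{H}^n_d=\mathcal{H}^n_e$ on $f(\Omega)$ by Theorem \ref{thm5}, the moduli $\modu$ and $\modup$ of any path family in $f(\Omega)$ coincide, so the Euclidean inequality $\modu\Gamma\le K\modu f(\Gamma)$ already proved in Theorem \ref{thm2} transfers verbatim. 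You instead verify the hypotheses of Williams' Theorem \ref{williams} directly for the intrinsic-metric target: Fuglede's Lemma \ref{fugledelemma} (whose exceptional family lives in $\Omega$ and is measured by Lebesgue measure, hence insensitive to the metric on the target, as you rightly note) together with $d(f(\gamma(a)),f(\gamma(b)))\le\ell(f\circ\gamma)$ shows that $|Df|$ is an $n$-weak upper gradient of $f\colon\Omega\to(f(\Omega),d)$, whence $\rho_{d,f}^n\le|Df|^n\le K\mu_{d,f}$ a.e.\ and Williams applies. Both routes rest on the same two pillars, Theorem \ref{thm5} and Williams' theorem (the paper's use of the latter being hidden inside Theorem \ref{thm2}); yours avoids having to observe that the rectifiable paths in $f(\Omega)$ and their line integrals are unchanged when passing from the Euclidean to the intrinsic metric, at the cost of redoing the upper-gradient verification that the paper had already carried out in the second half of the proof of Lemma \ref{lemmasobolev}. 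Either way the argument is sound.
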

\begin{proof}
	By Theorem \ref{thm5} we have that $\mathcal{H}^n_d(A)=\mathcal{H}^n_e(A)$ for any measurable set $A$ in $f(\Omega)$. Thus \[\modu (\Gamma)=\modup (\Gamma),\] for every path family $\Gamma$ in $f(\Omega)$. Thus Theorem \ref{thm2} implies that $f$ satisfies the lower half of the geometric definition \[	\modu\Gamma \leq K' \modup f(\Gamma).\] On the other hand Lemma \ref{lemmasobolev}  implies that  $f$ is in $N_{loc}^{1,n}(\Omega,f(\Omega))$. Moreover, by assumption we have that 
	\begin{equation}
	|Df(x)|^n\leq \frac{K_0}{||\omega||\circ f}\star f^*\omega,
		\end{equation}
	for almost every $x\in \Omega$ and some $K_0\geq1$. By arguing as in the proof of Theorem \ref{thm2} we have that $\star f^*\omega\leq C \mu_f$, for some constant $C>0$ and by Lemma \ref{lemmaradon} we have that $\lip{}_df(x)= |Df(x)|$. Thus \[\lip{}_df(x)^n\leq K \mu_{d,f},\] for some $K\geq1$.
	\end{proof}
Finally, we prove the second half of Theorem \ref{thmpath}.
\begin{theorem}\label{thm6}
	Let $f:\Omega\to f(\Omega)\subset \R^m$ be an embedding. Assume that $f(\Omega)$ has  locally finite $\mathcal{H}_d^n$ measure. If there exist constants $C>0$, $K\geq 1$ and an $n$-volume form with constant coefficients $\omega$  such that $\mu_{d,f}\leq C \star f^*\omega$ and $f$ either
	\begin{enumerate}
		\item is in $N^{1,n}_{d,loc}(\Omega,f(\Omega))$ and $\lip_d f(x)^n\leq K \mu_{d,f}$ for almost every $x\in \Omega$ or
		\item $\modu \Gamma\leq K\modup f(\Gamma)$, for any path family in $\Omega$,
	\end{enumerate}  then $f$ is a quasiconformal $\omega$-curve.
\end{theorem}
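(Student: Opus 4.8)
The plan is to transcribe the proof of Theorem \ref{thm4} to the intrinsic metric $d$ on $f(\Omega)$, the new ingredients being Lemma \ref{lemmasobolev} (the Newton--Sobolev spaces with respect to $d$ coincide with $W^{1,n}_{loc}(\Omega,\R^m)$) and the elementary fact that, for a curve contained in $f(\Omega)$, its length measured with $d$ equals its Euclidean length, since $d$ is precisely the length metric induced by the Euclidean metric. Recall that, as in the statement of Theorem \ref{thmpath}, $f$ is a homeomorphism onto $(f(\Omega),d)$; since $(f(\Omega),d)$ is then separable, being the continuous image of $\Omega$, and $(f(\Omega),d,\mathcal{H}^n_d)$ is locally finite by hypothesis, Williams' theorem (Theorem \ref{williams}) applies with $Q=n$ and shows that condition (2) is equivalent to the conjunction
\[
f\in N^{1,n}_{d,loc}(\Omega,f(\Omega))\qquad\text{and}\qquad \rho_{d,f}(x)^n\le K\mu_{d,f}(x)\quad\text{for a.e. }x\in\Omega .
\]

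The first step is to identify the minimal weak upper gradient. By Lemma \ref{lemmasobolev}, $f\in N^{1,n}_{d,loc}(\Omega,f(\Omega))$ is equivalent to $f\in W^{1,n}_{loc}(\Omega,\R^m)$, and then $|Df|$ is, by Lemma \ref{minimalgradient}, the minimal $n$-weak upper gradient of $f$ for the Euclidean target metric. The computation in the proof of Lemma \ref{lemmasobolev} shows that any Euclidean upper gradient of $f$ is also an upper gradient with respect to $d$; applied to $|Df|$ this gives $\rho_{d,f}\le |Df|$ a.e. Conversely, since $|f(x)-f(y)|\le d(f(x),f(y))$, every $d$-weak upper gradient of $f$ is a Euclidean weak upper gradient, whence $|Df|\le \rho_{d,f}$ a.e. Therefore $\rho_{d,f}=|Df|$ almost everywhere.

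Next, arguing exactly as in Theorem \ref{thm4}, I would show that $\lip_d f$ is an $n$-weak upper gradient of $f$ with respect to $d$. By Fuglede's lemma (Lemma \ref{fugledelemma}), $f$ is absolutely continuous on $n$-almost every rectifiable path $\gamma\colon[a,b]\to\Omega$, which we take parametrized by arc length. For such a $\gamma$ the curve $t\mapsto f(\gamma(t))\in\R^m$ is absolutely continuous, its Euclidean length equals its $d$-length, and therefore
\[
d(f(\gamma(a)),f(\gamma(b)))\le \int_a^b |(f\circ\gamma)'(t)|\,dt\le \int_a^b \lip_d f(\gamma(t))\,dt=\int_\gamma \lip_d f\,ds ,
\]
the middle inequality because $|(f\circ\gamma)'(t)|\le \limsup_{y\to\gamma(t)}\frac{d(f(y),f(\gamma(t)))}{|y-\gamma(t)|}=\lip_d f(\gamma(t))$ for a.e. $t$, using that $|\gamma(t+h)-\gamma(t)|/|h|\to1$. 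By minimality of $\rho_{d,f}$ we conclude $|Df|=\rho_{d,f}\le \lip_d f$ a.e.

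Finally I would assemble the pieces. In case (1) we have $f\in W^{1,n}_{loc}(\Omega,\R^m)$ and, directly, $|Df(x)|^n\le \lip_d f(x)^n\le K\mu_{d,f}(x)$ a.e.; in case (2), the equivalence above gives $f\in W^{1,n}_{loc}(\Omega,\R^m)$ and $|Df(x)|^n=\rho_{d,f}(x)^n\le K\mu_{d,f}(x)$ a.e. In either case, combining with the hypothesis $\mu_{d,f}\le C\star f^*\omega$ and the constancy of $\|\omega\|$ (valid because $\omega$ has constant coefficients) yields
\[
(\|\omega\|\circ f)\,|Df(x)|^n\le K C\,\|\omega\|\,(\star f^*\omega)\qquad\text{for a.e. }x\in\Omega ,
\]
which is the distortion inequality \eqref{eq03} with constant $\max\{1,KC\|\omega\|\}$. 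Since also $f\in W^{1,n}_{loc}(\Omega,\R^m)$ and $f$ is a topological embedding, Definition \ref{def1} shows that $f$ is a quasiconformal $\omega$-curve. The step I expect to require the most care is the proof that $\lip_d f$ is a $d$-weak upper gradient: concretely, verifying that the image of an absolutely continuous path is rectifiable in $(f(\Omega),d)$ with $d$-length equal to its Euclidean length and that its metric speed is controlled by $\lip_d f$ along the path. Everything else is a direct transcription of arguments already present in the paper (Theorems \ref{thm2} and \ref{thm4}, Lemma \ref{lemmasobolev}).
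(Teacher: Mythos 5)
Your proposal is correct and follows essentially the same route as the paper's proof: Williams' theorem together with Lemma \ref{lemmahomeo} for the equivalence of (1) and (2), Lemma \ref{lemmasobolev} to get $f\in W^{1,n}_{loc}(\Omega,\R^m)$, and a comparison of $|Df|$ with the ($d$-)pointwise Lipschitz constant via upper-gradient minimality. The only difference is cosmetic: the paper chains $|Df|\le\lip f\le\lip_d f$ citing the argument of Theorem \ref{thm4}, whereas you identify $\rho_{d,f}=|Df|$ explicitly, which if anything handles case (2) a bit more carefully.
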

\begin{proof}
	From Theorem \ref{williams} and Lemma \ref{lemmahomeo} we have that (1) and (2) are equivalent. From Lemma \ref{lemmasobolev} we have that $f\in W^{1,n}_{loc}(\Omega,\R^m)$. It is easy to see that $\lip f(x)\leq \lip_d f(x)$ and by arguing as in the proof of Theorem \ref{thm4}, $|Df(x)|\leq \lip f(x)$. Assuming that (1) holds now,  we have \[|Df(x)|^n\leq \lip f(x)^n\leq \lip{}_df(x)^n\leq K\mu_{d,f}\leq KC\star f^*\omega.\] Thus $f$ is a quasiconformal $\omega$ curve.
\end{proof}
\bibliographystyle{amsplain}
\bibliography{bibliography}
\end{document}